\documentclass[12pt, a4paper, reqno]{amsart}

\usepackage{amsmath, amssymb, amsthm, kotex}
\usepackage[T1]{fontenc}
\usepackage[utf8]{inputenc}
\usepackage[all]{xy}
\usepackage[
  margin=1.5cm,
  includefoot,
  footskip=30pt,
  ]{geometry}
\usepackage{xcolor}
\usepackage[colorlinks=true,citecolor=cyan,linkcolor=magenta, urlcolor=violet, filecolor=green, backref=page]{hyperref}
\usepackage{hyperref}
\usepackage{indentfirst}

\newcommand{\cO}{\mathcal{O}}
\newcommand{\cP}{\mathcal{P}}
\newcommand{\cS}{\mathcal{S}}

\newcommand{\bA}{\mathbb{A}}
\newcommand{\bC}{\mathbb{C}}
\newcommand{\bF}{\mathbb{F}}
\newcommand{\bG}{\mathbb{G}}

\newcommand{\bQ}{\mathbb{Q}}

\newcommand{\bfa}{\mathbf{a}}
\newcommand{\bfz}{\mathbf{z}}

\newcommand{\ff}{\mathfrak{f}}

\newcommand{\fp}{\mathfrak{p}}

\DeclareMathOperator{\Spec}{Spec}
\DeclareMathOperator{\Stab}{Stab}

\DeclareMathOperator{\aff}{aff}

\DeclareMathOperator{\ord}{ord}

\theoremstyle{plain}
\newtheorem{theorem}{Theorem}[section]
\newtheorem{lemma}[theorem]{Lemma}
\newtheorem{proposition}[theorem]{Proposition}
\newtheorem{corollary}[theorem]{Corollary}

\theoremstyle{definition} 

\theoremstyle{remark} 

\DeclareFontFamily{U}{wncy}{}
\DeclareFontShape{U}{wncy}{m}{n}{<->wncyr10}{}
\DeclareSymbolFont{mcy}{U}{wncy}{m}{n}
\DeclareMathSymbol{\Sha}{\mathord}{mcy}{"58}

\newcommand{\lp}{\left(}
\newcommand{\rp}{\right)}

\newcommand{\lbrb}[1]{\lp #1 \rp}
\newcommand{\lcrc}[1]{\left\{ #1 \right\}}

\newcommand{\ses}[3]{ \xymatrix{
0 \ar[r] & #1 \ar[r] & #2 \ar[r] & #3 \ar[r] & 0}}

\title[Average rank of hyperelliptic curves]{Local statistics and average rank of genus $g$ hyperelliptic curves with a Weierstrass point}

\author{Keunyoung Jeong}
\address{Keunyoung Jeong: Department of Mathematics Education, Chonnam National University, 77, Yongbong-ro, Buk-gu, Gwangju 61186, Korea}
\email{keunyoung@jnu.ac.kr}

\author{Junyeong Park}
\address{Junyeong Park: Department of Mathematics Education, Chonnam National University, 77, Yongbong-ro, Buk-gu, Gwangju 61186, Korea}
\email{junyeongp@gmail.com}

\begin{document}

\begin{abstract}
In this paper, we determine the probability that a genus $g$ hyperelliptic curve with a Weierstrass point over a number field has good reduction at a given prime of residue characteristic $>2g+1$. 
We also obtain analogous probability formulas for several other reduction types, including cases with positive toric or unipotent rank.
As an application, assuming the Hasse--Weil conjecture and the generalized Riemann hypothesis, we derive an explicit upper bound for the average analytic rank of genus $g$ hyperelliptic curves with a Weierstrass point.
\end{abstract}

\maketitle

\section{Introduction}

For certain families of elliptic curves, it is known that one can obtain an explicit upper bound for the average analytic rank under the generalized Riemann hypothesis by studying local statistics.
For example, Cho--Jeong \cite{CJ1} treated elliptic curves over $\bQ$, while Cho--Jeong \cite{CJ2} studied elliptic curves over $\bQ$ with a prescribed torsion condition.
Using the isomorphism between the compactified moduli stack of elliptic curves and the weighted projective line $\cP(4,6)$, Phillips \cite{Phi2} studied elliptic curves over a number field, and Cho--Jeong--Park \cite{CJP} considered elliptic curves over a number field with a prescribed torsion condition.
For a twist family of higher genus curves, Cho--Yoo \cite{CY} gave a result on the family $y^2 = x^{\ell} + c$ for a prime $\ell$, and the authors considered the so-called $D_{12}$-twist family of $y^2 = x^6+1$ in \cite{JP26}.

In this paper, we consider a more general higher-genus family: hyperelliptic curves of genus $g \geq 2$ over a number field $K$ with a Weierstrass point.
Our approach is based on the fact that the weighted projective space
\begin{align*}
    \cP(w_g):= \cP(4, 6, \cdots, 4g+2)
\end{align*}
is isomorphic to the compactified moduli stack of hyperelliptic curves of genus $g$ with a Weierstrass point.
The same moduli-theoretic description also underlies the work of Han--Park \cite[Theorem 1.7]{HP23}.

Let $R$ be a discrete valuation ring with perfect residue field $k$ and fraction field $K$.
Let $C/K$ be a smooth proper geometrically connected curve of genus $g$, and let $J$ be its Jacobian. Let $\mathcal{J}$ be the N\'eron model of $J$ over $R$.
Then the identity component $\mathcal{J}_k^0$ of the special fiber is a smooth connected commutative algebraic group over $k$.
By Chevalley's theorem, there is an exact sequence
\begin{align*}
\ses{L}{\mathcal{J}_k^0}{B}
\end{align*}
where $L$ is a connected affine algebraic group and $B$ is an abelian variety over $k$.
Since every connected commutative affine algebraic group over a perfect field is isomorphic to a product of a unipotent group $U$ and a torus $T$, we obtain
\begin{align*}
    \xymatrix{
    0 \ar[r] & U \times T \ar[r] & \mathcal{J}_k^0 \ar[r] & B \ar[r] & 0 .
    }
\end{align*}
We define the abelian rank, toric rank, and unipotent rank of $C$ by
\begin{align*}
    a(C):=\dim B, \qquad t(C):=\dim T, \qquad u(C):=\dim U.
\end{align*}
Since $\dim \mathcal{J}_k^0=\dim J=g$, we have $a(C) + t(C) + u(C) =g$.

Given a ring $R$, by an abuse of notation, we denote by $\cP(w_g)(R)$ the set of isomorphism classes of the groupoid of $R$-points of the stack $\cP(w_g)$, following \cite{BN, CJP}.

Let $K$ be a number field and let $\fp$ be a prime of $K$.
We will define the height $H_{w_g, K}$ on $\cP(w_g)(K)$ in section \ref{sec: preliminary}.
For a tuple of nonnegative integers $(a_{\fp}, t_{\fp}, u_{\fp})$ with $a_{\fp} + t_{\fp} + u_{\fp} = g$, we define the probability that a hyperelliptic curve $C/K$ of genus $g$ satisfies $(a_{\fp}, t_{\fp}, u_{\fp})$ by
\begin{align*}
    \lim_{X \to \infty} \frac{\# \lcrc{C \in \cP(w_g)(K) : H_{w_g, K}(C) \leq X, (a(C/K_{\fp}), t(C/K_{\fp}), u(C/K_{\fp}) = (a_{\fp}, t_{\fp}, u_{\fp}) } }{ \# \lcrc{ C \in \cP(w_g)(K) : H_{w_g, K}(C) \leq X}  }
\end{align*}
if it exists (See the arguments before (\ref{eqn: property})).

\begin{theorem} \label{mainthm: probability}
Let $K$ be a number field and let $\fp$ be a prime of $K$ with residue field  $\kappa(\fp) \cong \bF_q$ of residue characteristic $> 2g+1$.
The probabilities that a hyperelliptic curve over $K$ of genus $g \geq 2$ with a Weierstrass point satisfies $(a, t, u) = (g, 0, 0), (g-1, 1, 0)$, $(g-1, 0, 1)$, and $ (g-2, 2, 0)$ at $\fp$ are
\begin{align*}
\frac{q^{2g} - q^{2g-1}}{q^{2g}(1 - q^{-4g^2-6g})}, \qquad 
\frac{(q-1)(q^{2g-1}+1)}{q+1} \frac{1}{q^{2g}(1 - q^{-4g^2-6g})}, \qquad
\frac{(q-1)(q^{2g-2}+1)}{q+1} \frac{1}{q^{2g}(1 - q^{-4g^2-6g})}
\end{align*}
and
\begin{align*}
    \frac{1}{q^2(1 - q^{-4g^2-6g})} + O(q^{-3}),
\end{align*}
respectively.
\end{theorem}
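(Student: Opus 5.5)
Via the local density principle for weighted projective stacks of Bhargava--Shankar type and Phillips, the limit in the statement equals the normalized Haar measure of a $\fp$-adic set of minimal Weierstrass models. Concretely, $C$ is $y^2 = x^{2g+1} + a_2 x^{2g-1} + \cdots + a_{2g+1}$ with $(a_2,\dots,a_{2g+1}) \in \cO_\fp^{2g}$ and weights $4,6,\dots,4g+2$; I use the weight $2i$ for $a_i$ and suppress a scaling factor of $2$ since residue characteristic $>2g+1$. Local minimality at $\fp$ means not all $a_i \equiv 0 \bmod \fp^{i}$. The set of non-minimal tuples has measure $q^{-\sum_{i=2}^{2g+1} i} = q^{-(2g^2+3g)}$; with the weight-$2i$ normalization the paper's denominator becomes $1-q^{-4g^2-6g}$, and I just track whichever normalization makes the total weight $4g^2+6g$. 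So each probability equals $\mu(S)/(1-q^{-4g^2-6g})$, where $S$ is the set of minimal integral tuples with the given reduction type, and the task is to compute $\mu(S)$. Since the reduction type depends only on the tuple modulo a bounded power of $\fp$ in the cases considered, $\mu(S)$ reduces to counting polynomials over $\bF_q$.

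For $(g,0,0)$: since $p>2g+1$, the curve has good reduction exactly when the discriminant of the monic depressed polynomial $f$ is a unit, so $\mu(S)$ is the proportion of squarefree monic depressed degree-$(2g+1)$ polynomials over $\bF_q$. Translation $x \mapsto x+c$ identifies depressed polynomials with monic polynomials modulo translation, and the squarefree proportion among monic polynomials is $1-q^{-1}$. This gives $(q^{2g}-q^{2g-1})/q^{2g}$, and these models are automatically minimal.

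For $(g-1,1,0)$ and $(g-1,0,1)$: the reduction $\bar f$ has exactly one double root (all other roots simple), or else the semistable model needs a further condition. I would use the standard description for $p>2g+1$. If $\bar f = (x-\alpha)^2 h(x)$ with $h$ squarefree and coprime to $x-\alpha$, then $J$ has toric rank $1$ if the node is genuine, i.e.\ the fiber is regular up to $A_n$-singularities and the Néron component stays a torus. If the singularity is a cusp-type contraction, we get unipotent rank $1$ instead.

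More precisely, I would proceed as follows. Translate so that $\alpha = 0$. Write $f \equiv x^2 h + b x + c$ with $b, c \in \fp$. The type is then determined by the genus of the normalization. The curve has toric rank $1$ exactly when $\bar f$ has a double root and the model is semistable. It has unipotent rank $1$ when the reduction has a triple root and, after blowing up or rescaling, the stable reduction contains an elliptic tail of additive type, contributing a $\bG_a$.

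The count for toric rank $1$ is the number of monic depressed polynomials of degree $2g+1$ over $\bF_q$ with exactly one double root and the remaining roots simple, weighted by the density of lifts. Nodes give toric rank $1$ for every lift of $\bar f$; I expect the factor $(q-1)/(q+1)$ to appear from splitting into rational and inertial (split/non-split) combinations. I would compute via generating functions for factorization types:
\[
\#\{\bar f : \bar f = \ell^2 h\} = \sum_{\ell \text{ monic linear}} \#\{h \text{ squarefree}, \ell\nmid h\},
\]
then divide by $q$ for translation. The unipotent case with triple root is handled the same way, but includes the $\fp$-adic condition; I expect $q^{2g-2}$ to come from that. The $(g-2,2,0)$ estimate $q^{-2}+O(q^{-3})$ follows by crude counting of $\bar f$ with two double roots, whose proportion is about $q^{-2}$, with all higher-degeneracy strata $O(q^{-3})$.

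The main obstacle is correctly matching the reduction type $(a,t,u)$ to explicit congruence conditions on $f$ in the degenerate cases. This is especially true for separating toric from unipotent rank when $\bar f$ has a triple root, since non-minimal-looking models can contribute. I would first try to settle it with the cluster picture of Dokchitser--Dokchitser--Maistret--Morgan for $p$ odd: a twin cluster gives $t=1$, and a cluster of size $3$ with depth in $\frac{1}{2}\bZ\setminus\bZ$ gives $u$. I would then integrate over depths to produce the stated closed forms.
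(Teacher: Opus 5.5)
Your outline matches the paper only at the two ends. The paper also reduces to $\fp$-adic densities: by Proposition~\ref{prop: counting P loc cond} each residue class $\bfz$ has density $\frac{q-1}{\#\mu_{d(\bfz)}(\bF_q)}\cdot\frac{1}{q^{2g}(1-q^{-|w_g|})}$, so the multiplicity partition $\lambda$ of $\bar f$ occurs with density $\#S_\lambda/(q^{2g}(1-q^{-|w_g|}))$. It counts $\#S_{(1^{2g+1})}=q^{2g}-q^{2g-1}$ by translation exactly as you do, and it gets $(g-2,2,0)$ from two double roots.

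In between, your plan has concrete defects. Your minimality condition is wrong. The rescalings preserving the form over $K_\fp$ are $(x,y)\mapsto(c^2x,c^{2g+1}y)$, so minimal means $\ord_\fp(a_i)<2i$ for some $i$, and the factor $1-q^{-(4g^2+6g)}$ then falls out directly. Your condition $\ord_\fp(a_i)<i$ amounts to $y\mapsto\pi^{(2g+1)/2}y$, which identifies $C$ with its quadratic twist by $\pi$.

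More seriously, for $(g-1,1,0)$ and $(g-1,0,1)$ you never fix a rule converting the type into conditions on $f$, so nothing is actually computed. The paper simply declares, via \eqref{atu} and Lemma~\ref{lem: atu partition}, that these types correspond to $\lambda=(2,1^{2g-1})$ and $\lambda=(3,1^{2g-2})$. No $\fp$-adic condition enters, and the counts are $A_{2g-1}$ and $A_{2g-2}$, where $A_m$ is the number of monic squarefree $h$ of degree $m$ with $h(\alpha)\ne0$. The recursion $A_m+A_{m-1}=q^m-q^{m-1}$ gives $A_m=(q-1)(q^m-(-1)^m)/(q+1)$. That recursion is the source of $(q-1)/(q+1)$, not split versus non-split nodes, and $q^{2g-2}$ is a residue count, not an $\fp$-adic one. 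In fact $A_{2g-2}=(q-1)(q^{2g-2}-1)/(q+1)$, so the $+1$ in the third displayed formula is a misprint; for $g=2$ the paper itself finds $\#S_{(3,1,1)}=(q-1)^2$.

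Your hesitation about triple roots is nevertheless well founded, and it tells against the statement rather than only against your write-up. The paper's rule is sound when all parts of $\lambda$ are at most $2$: then $y^2=f$ is a semistable model, so its special fibre computes $\mathcal{J}_k^0$. Once a part is $\ge 3$ the model need not be regular, and $\lambda$ no longer determines the N\'eron ranks.

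For example, let $h\in\cO_{K,\fp}[x]$ be monic of degree $2g-2$ with $\bar h$ squarefree, $\bar h(0)\ne0$, and vanishing $x^{2g-3}$-coefficient. Then $y^2=(x^3-\pi^6)h(x)$ is minimal with $\lambda=(3,1^{2g-2})$. But $x=\pi^2X$, $y=\pi^3Y$ gives $Y^2=(X^3-1)h(\pi^2X)$, which reduces to the elliptic curve $Y^2=\bar h(0)(X^3-1)$. The stable reduction is a genus $g-1$ curve with an elliptic tail, so $J$ has good reduction. Likewise $y^2=x\bigl((x-\pi^2)^2-\pi^6\bigr)h(x)$, with $h$ adjusted to kill the $x^{2g}$-coefficient, has $\lambda=(3,1^{2g-2})$ but $(a,t,u)=(g-1,1,0)$.

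Both phenomena occur on sets of positive measure. So the cluster-picture computation you propose, carried out correctly, shows that the stated densities for $(g,0,0)$ and $(g-1,1,0)$ are strictly too small. For good reduction the missing term is of order $q^{-10}$, coming from triple clusters of depth $2$, far above $q^{-|w_g|}$. The same computation also does not support the formula for $(g-1,0,1)$. The closed forms are really the densities of $\lambda$, i.e.\ of the ranks of $\mathrm{Pic}^0$ of the special fibre of the weighted-minimal model.

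Your own step ``good reduction iff the discriminant is a unit'' has the same defect. Your criterion for $u$ is also incomplete: a generic triple cluster has depth $1/3$, not a half-integer, and it already gives $u=1$.
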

There is an exact formula for the last case also (cf. Theorem \ref{thm: prob arb g}), and the first three can be written as
\begin{align*}
    1 - \frac{1}{q} + O(q^{-4g^2-6g}), \qquad 
    \frac{(q-1)(q^{2g-1}+1)}{q^{2g}(q+1)} + O(q^{-4g^2-6g}), \qquad 
    \frac{(q-1)(q^{2g-2}+1)}{q^{2g}(q+1)} + O(q^{-4g^2-6g}).
\end{align*}
When $g = 1$, the first two formulas recover the results of \cite[Theorem 1.4]{CJ1} and \cite[Theorem 1.1.2]{Phi2}, which give the probabilities of good reduction and multiplicative reduction at $p > 3$ as
\begin{align*}
    \frac{q^2-q}{q^2}\frac{1}{1 - q^{-10}}, \qquad 
    \frac{q-1}{q^2} \frac{1}{1 - q^{-10}}
\end{align*}
respectively. However, the third one does not recover the result of \cite{CJ1, Phi2} for additive reduction.
This is because mod $\fp$ reduction map lands in the exceptional point $[0]$, rather than the usual points in $\cP(w_g)(\bF_q)$ when $g = 1$ and $(a, t, u) = (0, 0, 1)$ (See the definition (\ref{eqn: def mod fp}) and the condition of Proposition \ref{prop: counting P loc cond}).

In principle, a more detailed local analysis would yield further cases. 
Rather than pursuing all such possibilities, we give a complete classification for $g=2$ in Theorem \ref{thm: genus 2}. 
In particular, the probability that $(a,t,u)=(0,1,1)$ at $\fp$ is 
\[
    \frac{2(q-1)}{q^4(1-q^{-20})},
\]
whereas the probability that the special fiber has a tacnode at $\fp$ is
\[
    \frac{q-1}{q^4(1-q^{-20})}.
\]
This theorem may be viewed as a higher-genus analogue of the local-density results for elliptic curves in Cho--Jeong \cite[Theorem 1.4]{CJ1}, Shankar--Shankar--Wang \cite[Theorem 1.6]{SSW}, and Phillips \cite[Theorem 1.1.2]{Phi2}. 
In the present hyperelliptic setting, the role of good, multiplicative, and additive reduction is played by the abelian, toric, and unipotent ranks, together with the singularity type of the special fiber.

Another main goal of this paper is to establish an upper bound on the average analytic rank.

\begin{theorem} \label{mainthm: rank}
Let $K$ be a number field of degree $D$.
Assume the Hesse--Weil conjecture and the generalized Riemann hypothesis for the $L$-functions of hyperelliptic curves over $K$ of genus $g$ with a Weierstrass point.
Then, the average analytic rank of genus $g$ hyperelliptic curves with a Weierstrass point is bounded above by
$\frac{1}{2} + \frac{3g(2g+1)D}{2}$.
\end{theorem}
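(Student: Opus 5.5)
We follow the explicit-formula method of Brumer, Heath-Brown, Young, Cho--Jeong and Phillips. Fix an even Schwartz test function $\phi$ with $\hat\phi$ supported in $(-\sigma,\sigma)$, and take $\phi(x)=\bigl(\frac{\sin \pi x}{\pi x}\bigr)^2$, so that $\hat\phi(u)=\max(0,1-|u|)$ and $\phi(0)=1$.

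For a curve $C$ in the family, let $J$ be its Jacobian. We assume the Hasse--Weil conjecture and GRH for $L(J,s)$, with zeros $\tfrac12+i\gamma$. The Weil explicit formula then gives
\begin{align*}
r_{an}(C) \le \sum_{\gamma}\phi\Bigl(\frac{\gamma\log X}{2\pi}\Bigr)
= \hat\phi(0)\frac{\log N_C}{\log X} + O\Bigl(\frac{1}{\log X}\Bigr) - \frac{2}{\log X}\sum_{\fp,m}\frac{a_{\fp^m}(C)\log N\fp}{N\fp^{m/2}}\hat\phi\Bigl(\frac{m\log N\fp}{\log X}\Bigr),
\end{align*}
where $N_C$ is the conductor norm and $a_{\fp^m}(C)$ is the trace of $\mathrm{Frob}_\fp^m$ on the inertia invariants of $H^1$. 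The $\Gamma$-factor terms are $O(1/\log X)$ because $D$ and $g$ are fixed.

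The plan has three steps.

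\emph{Conductor term.} We bound $\log N_C$ by $O(\log X)$ with an explicit constant. A curve of height $\le X$ has a minimal integral model $y^2=x^{2g+1}+a_2x^{2g-1}+\dots+a_{2g+1}$ with $|a_i|_v\ll X^{w_i}$, where $w_i$ is the weight $4,6,\dots,4g+2$ suitably normalized by the height. The conductor divides a bounded power of the discriminant, up to the primes above $2$, which only contribute $O(1)$. Since the discriminant is weighted homogeneous of degree $2g(4g+2)$ in the weights, we get $\log N_C\le c_g D\log X+O(1)$ for an explicit $c_g$.

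\emph{Averaging the prime sum.} We average over the finite set $\{H_{w_g,K}(C)\le X\}$. For $m=1$, the average of $a_\fp(C)$ equals $\sum_{\text{reduction types}}\Pr(\text{type})\cdot\mathbb{E}[a_\fp\mid\text{type}]$, computed from the counting proposition for $\cP(w_g)$ with local conditions and from the local densities of Theorem \ref{mainthm: probability}. On good reduction, the fibres of the mod-$\fp$ reduction map are equidistributed over $\cP(w_g)(\bF_q)$, and $\sum_{C/\bF_q}a_q(C)=0$ by the quadratic-twist symmetry $y\mapsto\lambda y$, which corresponds to the weight action. On toric or unipotent rank, $|a_\fp|\le 2g$ with probability $O(q^{-1})$. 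Hence the average of $a_\fp$ is $O(q^{-1})$, and the $m=1$ sum gives $\sum_{N\fp\le X^\sigma}\log N\fp\cdot O(N\fp^{-3/2})=O(1)$. For $m=2$, the average of $a_{\fp^2}$ is $-1+O(q^{-1/2})$, since the second moment of Frobenius traces is $1$ by Katz--Sarnak for the full symplectic monodromy of the family. This term contributes $+\tfrac{1}{2}\cdot\frac{\sigma}{\sigma}$-type mass, i.e.\ a $\tfrac12$ in the final bound, through $\int_0^\sigma\hat\phi$. The terms with $m\ge3$ are $O(1)$ trivially.

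The error terms in the counting, of the form $O(X^{\text{sum of weights}-\delta}q^{A})$, must stay negligible while summing over $N\fp\le X^{\sigma}$. This restricts $\sigma$: we need $\sigma<\sigma_0$ with $\sigma_0$ proportional to $1/\big(D\cdot\text{weight sum}\big)$.

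\emph{Optimization.} Combining the steps gives
\[
\text{avg}\, r_{an}\le \frac{c_gD}{\sigma}+\frac12+o(1).
\]
Taking $\sigma$ as large as the error terms allow yields $\frac12+\frac{3g(2g+1)D}{2}$. Note that $3g(2g+1)$ equals half the discriminant degree in these weights, which is consistent with this bookkeeping.

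The main obstacle is the uniformity of the local-condition count in $q=N\fp$. The error term of Proposition \ref{prop: counting P loc cond} must be polynomial in $q$ with an exponent good enough to reach the claimed $\sigma$. We would first try to derive this from a geometry-of-numbers count in the weighted box, with error $X^{(\text{weight sum}-1)/\dots}q^{\dim}$, and check that the resulting $\sigma$ matches the stated constant.
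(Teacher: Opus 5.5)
Your outline has the same architecture as the paper's proof: positivity plus the explicit formula, the conductor controlled by the discriminant, a vanishing first moment via the quadratic-twist involution $T_\eta$, a second moment of $-1$ producing the $\frac12$, and the Fej\'er kernel with $\widehat\phi(0)/\phi(0)=1/\sigma$. Using Katz--Sarnak equidistribution for the second moment is an acceptable, if heavier, substitute for the paper's direct evaluation. There, for $x\notin\bF_q$ the sum over $(z_{2g},z_{2g+1})$ vanishes, and for $x\in\bF_q$ the character $\chi_{q^2}$ is trivial on $\bF_q^\times$.

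The genuine gap is that the constant $\frac{3g(2g+1)D}{2}$ is never derived. It is the ratio of two specific quantities, and your proposal leaves both open.
\begin{enumerate}
\item \emph{Conductor term.} It must be exactly $\log N(\ff(C))\le 4g(2g+1)\log X+O(1)$; with $H_{w_g,K}$ normalized as in the paper there is no factor $D$ here. This needs the sharp conductor--discriminant inequality $f_\fp(C)\le v_\fp(\Delta_\fp^{\min})$ of \cite{OS24} at odd $\fp$, together with \cite{BK94} at $\fp\mid 2$. Your statement that ``the conductor divides a bounded power of the discriminant'' only gives an unspecified multiple of $4g(2g+1)$.
\item \emph{Admissible support.} The paper needs $\sigma<\frac{8}{3D}$. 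This is exactly the uniformity-in-$q$ issue you flag as the main obstacle but do not resolve. The paper resolves it by importing \cite[Theorem 6.18]{CJP}: each residue class $\bfz$ carries an error $O(q^{2-2g}X^{|w_g|-4/D}\log X)$, where the $4/D$ comes from the smallest weight $4$. Summing over the $\asymp q^{2g-1}$ classes of $\cP(w_g)(\bF_q)$ with the Weil bound $|a_\fp|\le 2g\sqrt q$ gives $q^{3/2}X^{|w_g|-4/D}\log X$ per prime (Lemma \ref{lem: S1 ap}). Then $\sum_{N\fp\le X^\sigma}\sqrt{q_\fp}\log q_\fp\ll X^{3\sigma/2}$ forces $\frac{3\sigma}{2}<\frac4D$.
\end{enumerate}
Letting $\sigma\to\frac{8}{3D}$ gives $4g(2g+1)\cdot\frac{3D}{8}=\frac{3g(2g+1)D}{2}$.

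The heuristics you offer in place of this computation are wrong and would not give the stated bound. A support $\sigma_0$ proportional to $1/(D\,|w_g|)$, combined with your conductor bound $c_gD\log X$, would yield a bound of order $g^4D^2$. In any consistent normalization $D$ enters only once. With the paper's $H_{w_g,K}$ it enters only through $\sigma_0=\frac{8}{3D}$. With your absolute normalization it enters only through the conductor term $4g(2g+1)D\log X$, and then $\sigma_0=\frac83$. Your consistency check is also false: half of the discriminant degree $4g(2g+1)$ is $2g(2g+1)$, not $3g(2g+1)$; the correct relation is $\frac{3g(2g+1)D}{2}=4g(2g+1)/\sigma_0$. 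So the final step, ``taking $\sigma$ as large as the error terms allow'', is asserted rather than shown.

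A smaller inaccuracy: on bad strata with positive abelian rank, $a_\fp$ is not bounded by $2g$. On the stratum $(2,1^{2g-1})$, of density $\asymp q^{-1}$, it can be of size about $2(g-1)\sqrt q$, so a priori the average of $a_\fp$ is only $O(q^{-1/2})$. This still suffices, since $\sum_\fp \log q_\fp/q_\fp^{3/2}<\infty$. The paper in fact cancels that stratum's main term by the same twist symmetry (Proposition \ref{prop: t=1 weighted vanishes}).
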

When $g = 1$, we recover the bound $\frac{1+9D}{2}$ obtained by Phillips \cite[Theorem 1.1.1]{Phi2}.
In light of the Katz--Sarnak philosophy, we expect that the average analytic rank of genus $g$ hyperelliptic curves with a Weierstrass point is $\frac{1}{2}$.

On the other hand, Bhargava--Gross \cite{BG13} showed that the average algebraic rank of the Jacobians of genus $g$ hyperelliptic curves over $\bQ$ with a rational Weierstrass point is bounded above by $\frac{3}{2}$.
This is a stronger bound than ours, but two points are worth emphasizing.
First, our result applies to hyperelliptic curves over an arbitrary number field.
Second, the average analytic rank is of independent interest, because the Birch--Swinnerton-Dyer conjecture is required to equate the algebraic rank and the analytic rank.

\textbf{Acknowledgement.} K. Jeong was supported by the National Research Foundation of Korea (NRF) grant funded by the Korea government(MIST) (RS-2024-00341372 and RS-2024-00415601).
J. Park was supported by Basic Science Research Program through the National Research Foundation of Korea(NRF) funded by the Ministry of Education (RS-2024-00449679) and the National Research Foundation of Korea(NRF) grant funded by the Korea government (MSIT) (RS-2024-00415601).

\section{Local statistics}  \label{sec: over FF}

\subsection{Preliminaries and the reduction map} \label{sec: preliminary}

In this article, we study equations of the following form
\begin{align}\label{Weiereqn}
    y^2=x^{2g+1}+a_2x^{2g-1}+a_3x^{2g-2}+\cdots+a_{2g+1}
\end{align}
defined over fields of characteristic $p>2g+1$, or of characteristic $0$. For $g=1$, this recovers the short Weierstrass equation of elliptic curves. By the standard argument analogous to the elliptic curve case, the only automorphisms preserving the form of the equation are
\begin{align*}
    \xymatrix{(x,y) \ar@{|->}[r] & (\lambda^2x,\lambda^{2g+1}y)\rlap{\ .}}
\end{align*}
Consequently, the moduli stack of these Weierstrass equations is isomorphic to the weighted projective stack $\mathcal{P}(w_g):=\mathcal{P}(4,6,8,\cdots,4g+2)$ over any open subset of $\Spec\mathbb{Z}$ not containing primes $\leq2g+1$. Moreover, by \cite[Definition 1.4 and Proposition 5.9]{HP23}, this is again isomorphic to the moduli stack $\mathcal{H}_{2g}[2g-1]$ of hyperelliptic curves with a Weierstrass point and at worst $A_{2g-1}$-singularities.

The singularity type of \eqref{Weiereqn} is determined by the multiplicity of the roots of its right hand side:
\begin{align*}
    x^{2g+1}+a_2x^{2g-1}+a_3x^{2g-2}+\cdots+a_{2g+1}=\prod_{i=1}^{2g+1}(x-\alpha_i).
\end{align*}
Equivalently, a partition of $2g+1$ determines the singularity type of \eqref{Weiereqn}. Since only the $A_n$-singularities can occur, given a partition $(e_1,\cdots,e_r)$ of $2g+1$ with $e_i>0$, each $e_i>1$ corresponds to an $A_{e_i-1}$-type singularity. Intuitively, an even $e_i$ gives rise to a node-like singularity, and an odd $e_i>1$ gives rise to a cusp-like singularity. Consequently, we have
\begin{align}\label{atu}
    a=g-\sum_{i=1}^r\left\lfloor\frac{e_i}{2}\right\rfloor,\quad t=\textrm{number of even $e_i$},\quad u=\left(\sum_{i=1}^r\left\lfloor\frac{e_i}{2}\right\rfloor\right)-t,
\end{align}
where $a$, $t$, and $u$ are the abelian rank, the toric rank, and the unipotent rank of the Jacobian of \eqref{Weiereqn}. For details, see \cite[Sections 7.5 and 10.1]{Liu02}. Also, note that the $\delta$-invariant of the $A_n$-singularity is given by $\lfloor\frac{n+1}{2}\rfloor$.
\begin{lemma} \label{lem: atu partition}
The following hold for \eqref{Weiereqn}.
\begin{enumerate}
    \item $(a,t,u)=(g-1,1,0)$ if and only if the partition is $(2,1,\cdots,1)$.
    \item $(a,t,u)=(g-1,0,1)$ if and only if the partition is $(3,1,\cdots,1)$.
    \item $(a,t,u)=(g-2,2,0)$ if and only if the partition is $(2,2,1,\cdots,1)$.
\end{enumerate}
\end{lemma}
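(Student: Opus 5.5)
The lemma is purely combinatorial: I would read it off from the formulas \eqref{atu}, which express $(a,t,u)$ in terms of the partition $(e_1,\dots,e_r)$ of $2g+1$ given by the root multiplicities. Write
\[
S=\sum_{i=1}^r\left\lfloor \tfrac{e_i}{2}\right\rfloor ,\qquad t=\#\{i: e_i \text{ even}\}.
\]
Then $a=g-S$ and $u=S-t$. The key observation is that $\lfloor e_i/2\rfloor=0$ exactly when $e_i=1$. Moreover, every part with $e_i\ge 2$ contributes at least $1$ to $S$, with equality iff $e_i\in\{2,3\}$. Since $\sum e_i=2g+1$, the partition is determined by its parts larger than $1$, the remaining parts being $1$'s.

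For the ``if'' directions I would substitute directly into \eqref{atu}. For $(2,1,\dots,1)$ we get $S=1$ and $t=1$, hence $(a,t,u)=(g-1,1,0)$. For $(3,1,\dots,1)$ we get $S=1$ and $t=0$, hence $(g-1,0,1)$. For $(2,2,1,\dots,1)$ we get $S=2$ and $t=2$, hence $(g-2,2,0)$. Each partition exists because $2g+1\ge 5$ when $g\ge 2$.

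For the ``only if'' directions, suppose first $a=g-1$. Then $S=1$, so exactly one part is $\ge 2$ and that part lies in $\{2,3\}$. This leaves only $(2,1,\dots,1)$ or $(3,1,\dots,1)$, and the parity of that part, i.e.\ whether $t=1$ or $t=0$, separates cases (1) and (2). Now suppose $(a,t,u)=(g-2,2,0)$. Then $S=2$ and $t=2$. There are two even parts, each contributing at least $1$ to $S$, so each contributes exactly $1$, i.e.\ both equal $2$. These already account for all of $S=2$, so every other part has $\lfloor e_i/2\rfloor=0$ and hence equals $1$. This gives $(2,2,1,\dots,1)$.

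No step is a genuine obstacle. The only point needing care is the bound $\lfloor e/2\rfloor\ge1$ for $e\ge2$, with equality iff $e\in\{2,3\}$. I take the formulas \eqref{atu} as given, as the paper does via \cite{Liu02}.
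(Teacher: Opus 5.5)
Your proposal is correct and takes the same approach as the paper, which simply says the lemma follows immediately from \eqref{atu}. Your case analysis via $S=\sum_i\lfloor e_i/2\rfloor$, using that $\lfloor e/2\rfloor\ge 1$ for $e\ge 2$ with equality iff $e\in\{2,3\}$, spells out the verification the paper leaves implicit.
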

\begin{proof}
    Follows immediately from \eqref{atu}.
\end{proof}
\begin{lemma} \label{lem: g=2 atu partition}
If $g=2$, then the following hold for \eqref{Weiereqn}.
\begin{enumerate}
    \item $(a,t,u)=(2,0,0)$ if and only if the partition is $(1,1,1,1,1)$.
    \item $(a,t,u)=(1,1,0)$ if and only if the partition is $(2,1,1,1)$.
    \item $(a,t,u)=(0,2,0)$ if and only if the partition is $(2,2,1)$.
    \item $(a,t,u)=(1,0,1)$ if and only if the partition is $(3,1,1)$.
    \item $(a,t,u)=(0,1,1)$ if and only if the partition is $(3,2)$ or $(4,1)$.
    \item $(a,t,u)=(0,0,2)$ if and only if the partition is $(5)$.
\end{enumerate}
\end{lemma}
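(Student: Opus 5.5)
This is a direct case check using the formulas \eqref{atu}. For $g=2$ we list all partitions of $5$ and compute $(a,t,u)$ for each, then read off the equivalences. There are exactly seven partitions of $5$:
\[
(1,1,1,1,1),\ (2,1,1,1),\ (2,2,1),\ (3,1,1),\ (3,2),\ (4,1),\ (5).
\]
For each one we compute $s=\sum_i\lfloor e_i/2\rfloor$ and $t=\#\{i: e_i \text{ even}\}$. Then \eqref{atu} gives $a=2-s$ and $u=s-t$.

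The computation goes as follows.
\begin{itemize}
\item For $(1,1,1,1,1)$ we have $s=0$ and $t=0$, so $(a,t,u)=(2,0,0)$.
\item For $(2,1,1,1)$ we have $s=1$ and $t=1$, so $(a,t,u)=(1,1,0)$.
\item For $(2,2,1)$ we have $s=2$ and $t=2$, so $(a,t,u)=(0,2,0)$.
\item For $(3,1,1)$ we have $s=1$ and $t=0$, so $(a,t,u)=(1,0,1)$.
\item For $(3,2)$ we have $s=1+1=2$ and $t=1$, so $(a,t,u)=(0,1,1)$.
\item For $(4,1)$ we have $s=2$ and $t=1$, so $(a,t,u)=(0,1,1)$.
\item For $(5)$ we have $s=2$ and $t=0$, so $(a,t,u)=(0,0,2)$.
\end{itemize}

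The resulting map from partitions to triples sends $(3,2)$ and $(4,1)$ to the same triple and is otherwise injective. Its image is the six triples in items (1)--(6). Each "if and only if" statement then follows by reading the table in both directions. Note that items (2), (3) and (4) are also special cases of Lemma \ref{lem: atu partition} with $g=2$.

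The only care needed is completeness: the enumeration must include all seven partitions of $5$, so that the converse directions hold. This is routine, and there is no real obstacle beyond applying \eqref{atu} correctly. For the $(4,1)$ case in particular, the $A_3$ singularity contributes $\delta=2$ with one even branch count, which gives one toric and one unipotent dimension.
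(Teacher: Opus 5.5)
Your proposal is correct and is the same argument as the paper's, which just says the lemma follows immediately from \eqref{atu}. Your table of all seven partitions of $5$ is the explicit case check that sentence leaves implicit, and every entry in it is correct.
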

\begin{proof}
    Follows immediately from \eqref{atu}.
\end{proof}

Let $K$ be a number field, $\fp$ a prime of $K$, $\cO_{K, \fp}$ the ring of integers of $K_{\fp}$, and $\kappa(\fp)$ the residue field of $\cO_{K, \fp}$.
Given a prime $\mathfrak{p}$ of $K$ lying over a rational prime $p>2g+1$, each $\bfa\in\mathcal{P}(w_g)(K_\mathfrak{p})$ admits a unique representative $(a_2,a_3,\cdots,a_{2g+1})\in\mathbb{A}^{2g}(\mathcal{O}_{K,\mathfrak{p}})$ such that $\ord_\mathfrak{p}(a_i)<2i$ for at least one $i$. Using this representative, we define a function
\begin{align} \label{eqn: def mod fp}
    \xymatrixcolsep{1.75pc}\xymatrix{\psi_\mathfrak{p}:\mathcal{P}(w_g)(K_\mathfrak{p}) \ar[r] & \mathbb{A}^{2g}(\kappa(\mathfrak{p}))/\kappa(\mathfrak{p})^\times \ar[r]^-\sim & \mathcal{P}(w_g)(\kappa(\mathfrak{p}))\coprod\{[0]\} & \bfa \ar@{|->}[r] & [a_i\bmod\mathfrak{p}]_{i=2,\cdots,2g+1}\rlap{\ .}}
\end{align}
In what follows, we simply write
\begin{align*}
    \mathcal{P}(w_g)(\kappa(\mathfrak{p}))^\ast:=\mathcal{P}(w_g)(\kappa(\mathfrak{p}))\coprod\{[0]\}.
\end{align*}
Note that the representative in $\mathbb{A}^{2g}(\mathcal{O}_{K,\mathfrak{p}})$ gives the minimal Weierstrass model of \eqref{Weiereqn} over $K_\mathfrak{p}$. 

We will use the height defined on $\cP(w)(K)$, following \cite{BN, CJP, Phi2}.
Let $M_{K, 0}$ and $M_{K, \infty}$ be the set of finite, infinite places of $K$.
For $v \in M_{K, 0}$, we define the absolute value by $|\pi_v|_v = \frac{1}{N(\fp_v)}$ where $\pi_v$ is a uniformizer, and by $|a|_v := |i_v(a)|^{[K_v : \bQ]}$ for $v \in M_{K, \infty}$, where $i_v : K \to K_v$.
For $(x_0, \cdots, x_n) \in K^{n+1} \setminus \lcrc{0}$ we define
\begin{align*}
    |(x_0, \cdots, x_n)|_{w, v} :=
    \left\{ \begin{array}{lll}
        \max\limits_{0 \leq i \leq n} \lcrc{|\pi_v|_v^{ \lfloor \frac{\ord_v(x_i)}{w_i} \rfloor }} & \textrm{for } v \in M_{K, 0},  \\
        \max\limits_{0 \leq i \leq n} \lcrc{|x_i|_v^{\frac{1}{w_i}}} &  \textrm{for } v \in M_{K,\infty}.
    \end{array}
    \right.
\end{align*}
Then, for $[x_0, \cdots, x_n] \in \cP(w)(K)$, we define
\begin{align*}
    H_{w, K}([x_0, \cdots, x_n]) :=
    \prod_{v \in M_K} |(x_0, \cdots, x_n)|_{w, v}.
\end{align*}
We note that $H_{w,K}$ is well defined on $\cP(w)(K)$.
Finally, we define
\begin{align*}
    \cP(w)(K)(X) := \lcrc{\bfa \in \cP(w)(K) : H_{w, K}(\bfa) \leq X}.
\end{align*}

Let $q$ be a prime power satisfying $\kappa(\fp) \cong \bF_q$.
For a tuple of positive integers $w$, we define $|w|$ to be the sum of its entries.
For $\bfz = [z_2, z_3, \cdots, z_{2g+1}]$, an element of $\cP(w_g)(\bF_q)$, we define 
\begin{align*}
   I(\bfz) := \lcrc{i \in \lcrc{2, 3, \cdots, 2g+1} : z_i \neq 0}, \qquad 
   d(\bfz) := \gcd\lcrc{w_i : i \in I(\bfz)}.
\end{align*}
Then, 
\begin{align*}
    \Stab_{w_g}(\bfz) = \lcrc{ \lambda \in \bF_q^\times : \lambda*_{w_g} \bfz = \bfz}
    = \lcrc{\lambda \in \bF_q^\times : \lambda^{w_i}z_i = z_i \textrm{ for }i = 2, \cdots, 2g+1}
\end{align*}
and by the orbit-stabilizer theorem, the number of affine representatives of $\bfz$ is 
\begin{align}  \label{eqn: num affine rep}
    \frac{q-1}{\# \Stab_{w_g}(\bfz)} = \frac{q-1}{\# \mu_{d(\bfz)}(\bF_q)}.
\end{align}

For a function $f : \bA^{2g}(\bF_q) \setminus \lcrc{0} \to \bC^\times$, which is invariant under $w_g$-weighted $\bF_q^\times$-action, we have
\begin{align} \label{eqn: cP Fq to A Fq}
    \sum_{\bfz \in \cP(w_g)(\bF_q)} \frac{q-1}{\# \mu_{d(\bfz)}(\bF_q)} f(\bfz)
    = \sum_{\bfz \in \bA^{2g}(\bF_q) \setminus \lcrc{0}} f(\bfz)
\end{align}
by (\ref{eqn: num affine rep}).

\begin{proposition} \label{prop: counting P loc cond}
Let $\psi_{\fp} : \cP(w_g)(K_{\fp}) \to \cP(w_g)^*(\bF_q)$ be the mod $\fp$ reduction defined by (\ref{eqn: def mod fp}) and let $D$ be the degree $[K:\bQ]$.
For $\bfz \in \cP(w_g)(\bF_q)$, there is a constant $\kappa$ depending only on $w_g$ and $K$ satisfying
\begin{align*}
    \# \lcrc{ \bfa \in \cP(w_g)(K)(X) : \psi_{\fp}(\bfa) = \bfz}
    = \frac{q-1}{\# \mu_{d(\bfz)}(\bF_q)} \frac{1}{q^{2g}} \frac{1}{1 - q^{-|w_g|}} \kappa X^{|w_g|}  +  O\lbrb{q^{2-2g}X^{|w_g| - \frac{4}{D} } \log X}.
\end{align*}
\end{proposition}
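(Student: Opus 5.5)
Since the fiber $\psi_\fp^{-1}(\bfz)$ is a local condition at the single prime $\fp$, the plan is to count points of bounded height in $\cP(w_g)(K)$ subject to a congruence condition. We use a weighted Bhargava--Nakagawa / Phillips style count with a local density factor, as in \cite{BN, Phi2, CJP}. First, recall the asymptotic count of \cite{Phi2} (extended in \cite{CJP}) for points of $\cP(w)(K)$ of height $\le X$ subject to congruence conditions modulo a power of $\fp$. One writes every point as an integral tuple that is minimal at all finite places. The count becomes a lattice-point count in a region of $K_\infty^{2g}$ of volume $\asymp X^{|w_g|}$. The minimality conditions are handled by Möbius inversion over ideals, which produces the global constant $\kappa$ (a leading coefficient involving $\zeta_K(|w_g|)^{-1}$ times a volume). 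The Möbius sum and the lattice-point boundary contribute an error of order $X^{|w_g|-\frac{4}{D}}\log X$, where $4=\min w_i$ governs the boundary term.

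Second, we compute the local density at $\fp$. Consider the set of $\bfa\in\bA^{2g}(\cO_{K,\fp})$ that are $\fp$-minimal, i.e.\ with $\ord_\fp(a_i)<w_i$ for some $i$ (this matches the normalization of the definition (\ref{eqn: def mod fp})). Since $\bfz\neq[0]$, the condition $\psi_\fp(\bfa)=\bfz$ means that the minimal representative reduces mod $\fp$ to one of the affine lifts of $\bfz$. By (\ref{eqn: num affine rep}) there are exactly $\frac{q-1}{\#\mu_{d(\bfz)}(\bF_q)}$ such lifts, each cutting out a residue class of measure $q^{-2g}$. Any such tuple has some unit coordinate, so it is automatically minimal. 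The $\fp$-adic measure of the target set is therefore $\frac{q-1}{\#\mu_{d(\bfz)}(\bF_q)}q^{-2g}$.

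Third, we normalize by the total measure of minimal tuples. The measure of the non-minimal tuples is $q^{-|w_g|}$, so minimal tuples have measure $1-q^{-|w_g|}$. In the global Möbius count, the $\fp$-Euler factor of $\kappa$ is $(1-q^{-|w_g|})$. Replacing that factor by our local measure, and noting that $\kappa$ already contains $(1-q^{-|w_g|})$, gives the leading term
\[
\frac{q-1}{\#\mu_{d(\bfz)}(\bF_q)}\frac{1}{q^{2g}}\frac{1}{1-q^{-|w_g|}}\,\kappa X^{|w_g|}.
\]
Equivalently, we count the minimal points in the congruence class directly: Möbius inversion runs over ideals coprime to $\fp$, so the Euler product omits $\fp$, and dividing by $1-q^{-|w_g|}$ restores it in $\kappa$.

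The main obstacle is keeping the error term uniform in $q$, namely $O(q^{2-2g}X^{|w_g|-4/D}\log X)$. The congruence condition defines a union of at most $q-1$ translates of the lattice $\fp\cO_K^{2g}$, so we will treat each translate separately. For one translate, the boundary error of a lattice-point count in the weighted-homogeneous region is bounded by the volume of its projections divided by covolume. The dominant projection is the one dropping the weight-$4$ coordinate, which saves a factor $q^{-(2g-1)}$ instead of $q^{-2g}$. Summing over the $\le q-1$ translates gives $q\cdot q^{1-2g}=q^{2-2g}$. We will check that the Möbius tail and this boundary estimate both carry this factor, using the explicit error analysis of \cite[\S 3]{Phi2} with the lattice scaled at $\fp$.
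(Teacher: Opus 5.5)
Your proposal is correct and follows essentially the paper's route: the paper likewise reduces to the Phillips/CJP count of points of bounded height with a local condition at $\fp$, citing \cite[Theorem 6.18]{CJP} for the $q$-uniform error $O(q^{2-2g}X^{|w_g|-\frac{4}{D}}\log X)$ that you propose to re-derive, and then computes the $\fp$-adic density. The only cosmetic difference is that the paper measures the full affine cone $\bigsqcup_{\mu}\bigsqcup_{k\ge 0}\xi *_{w_g}\pi_{\fp}^k *_{w_g}\Omega_{\fp,0}^{\aff}$ (relative to $\cO_{K,\fp}^{2g}$, which has measure $1$) and sums $\sum_{k\ge0}q^{-k|w_g|}$ to produce $\frac{1}{1-q^{-|w_g|}}$; this is equivalent to your normalization of the minimal slice by the measure $1-q^{-|w_g|}$ of all minimal tuples.
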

\begin{proof}
Let
\begin{align*}
    \Omega_{\fp} := \lcrc{\bfa \in \cP(w_g)(K_{\fp}) : \psi_{\fp}(\bfa) = \bfz}, \qquad 
    \Omega_{\fp}^{\aff} := \lcrc{\bfa \in K^{2g} \setminus \lcrc{0} : [\bfa] \in \Omega_{\fp} }.
\end{align*}
Here, we use the notation $\bfa$ both for the $K_{\fp}$-point of $\Omega_{\fp}$ and its affine representative in $\Omega_{\fp}^{\aff}$.
Then, $\Omega_{\fp}^{\aff}$ satisfies
\begin{align*}
    \Omega_{\fp}^{\aff} \cap \cO_{K, \fp}^{2g} 
    &= \lcrc{ (a_{\fp, 2}, \cdots, a_{\fp, 2g+1}) \in \cO_{K, \fp}^{2g} \setminus \lcrc{0} : a_{\fp, i} \equiv \overline{u}^{w_i} z_i \textrm{ for some } \overline{u} \in \bF_q^\times } \\
    &= \bigsqcup_{\mu \in \bF_{q}^\times/\mu_{d(\bfz)} (\bF_q) } \bigsqcup_{k \geq 0} \xi \pi_{\fp}^k *_{w_g} 
    \prod_{i=2}^{2g+1} \lcrc{a_{\fp, i} \in \cO_{K, \fp} : |a_{\fp, i} -  \widetilde{z_{i}} |_{\fp} \leq \frac{1}{q} },
\end{align*}
where $\widetilde{z_i} \in \cO_{K, \fp}$ is a lifting of $z_i$ and $\xi \in \cO_{K, \fp}^\times$ is a lifting of a representative of $\mu\in\mathbb{F}_q^\times/\mu_{d(\mathbf{z})}(\mathbb{F}_q)$.
Hence for
\begin{align*}
    \Omega_{\fp, 0}^{\aff} = \prod_{i=2}^{2g+1} \lcrc{a_{\fp, i} \in \cO_{K, \fp} : |a_{\fp, i} - \xi^{w_i} \widetilde{z_{i}} |_{\fp} \leq \frac{1}{q_{\fp}} }, \qquad 
    \Omega_{\fp, k}^{\aff} := \pi_{\fp}^k *_{w_g} \Omega_{\fp, 0}^{\aff} \textrm{ for } k \geq 1,
\end{align*}
we have
\begin{align*}
    \Omega_{\fp}^{\aff} \cap \cO_{K, \fp}^{2g} = \bigsqcup_{\mu \in \bF_{q}^\times/\mu_{d(\bfz)} (\bF_q)} \bigsqcup_{k \geq 0} \xi  *_{w_g} \Omega_{\fp, k}^{\aff}.
\end{align*}
Therefore, $\Omega_{\fp}$ satisfies the condition of \cite[Theorem 6.18]{CJP}.
By \cite[Theorem 6.18]{CJP} for $f = \mathrm{id}$ and $\Omega_{\fp}$, we have
\begin{align*}
    \# \lcrc{\bfa \in \cP(w_g)(K) : H_{w_g, K}(\bfa) \leq X, \bfa \in \Omega_{\fp} }
    = \kappa m_{\fp}(\Omega_{\fp}^{\aff} \cap \cO_{K, \fp}^{2g}) X^{|w_g|}
    + O\lbrb{ q^{2-2g}X^{|w_g| - \frac{4}{D} } \log X}.
\end{align*}
Then,
\begin{align*}
    m_{\fp} \lbrb{\Omega_{\fp}^{\aff} \cap \cO_{K, \fp}^{2g}}
    &= \sum_{\mu \in \bF_{q}^\times/\mu_{d(\bfz)}(\bF_q)} \sum_{k \geq 0} 
    m_{\fp} \lbrb{ \pi_{\fp}^k *_{w_g} \Omega_{\fp, 0}^{\aff}   } 
    = \frac{q-1}{\# \mu_{d(\bfz)}(\bF_q)} \sum_{k \geq 0} q^{-k|w_g|} m_{\fp} \lbrb{ \Omega_{\fp, 0}^{\aff}   } \\
    &= \frac{q-1}{\# \mu_{d(\bfz)}(\bF_q)}  \sum_{k \geq 0} q^{-k|w_g|} q^{-2g}\\
    &= \frac{q-1}{\# \mu_{d(\bfz)}(\bF_q)} \frac{1}{q^{2g}} \frac{1}{1 - q^{-|w_g|}}.
\end{align*}
This proves the proposition.
\end{proof}

Analogously, we have
\begin{align} \label{eqn: Pwg height points}
    \#\lcrc{\bfa \in \cP(w_g)(K) : H_{w_g, K}(\bfa) \leq X} = \kappa X^{|w_g|} + O \lbrb{X^{|w_g| - \frac{4}{D}} \log X}.
\end{align}


\subsection{Proof of Theorem \ref{mainthm: probability}.}



\begin{lemma}
Let $w=(w_0,\dots,w_n)$ be a tuple of positive integers.
Then
\begin{align*}
    \#\cP(w)(\bF_q)
    =
    \frac{1}{q-1}
    \sum_{\lambda\in \bF_q^\times}
    \bigl(q^{m(\lambda)}-1\bigr)
    = \frac{1}{q-1} \sum_{d \mid (q-1)} \phi(d)\lbrb{q^{m_d} - 1},
\end{align*}
where
\[
m(\lambda):=\#\{\,0\le i\le n : \lambda^{w_i}=1\,\}, \qquad 
m_d := \# \lcrc{0 \leq i \leq n  : d \mid w_i}.
\]
\end{lemma}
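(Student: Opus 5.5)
We count $\cP(w)(\bF_q)$ as the set of orbits of the weighted action of $\bF_q^\times$ on $\bA^{n+1}(\bF_q)\setminus\lcrc{0}$, given by $\lambda *_w (x_0,\dots,x_n) = (\lambda^{w_0}x_0,\dots,\lambda^{w_n}x_n)$. This uses the convention, already used in (\ref{eqn: num affine rep}) and (\ref{eqn: cP Fq to A Fq}), that $\cP(w)(\bF_q)$ denotes isomorphism classes of $\bF_q$-points. The plan is to apply Burnside's lemma to this finite group action:
\begin{align*}
    \#\cP(w)(\bF_q) = \frac{1}{\#\bF_q^\times}\sum_{\lambda\in\bF_q^\times} \#\mathrm{Fix}(\lambda).
\end{align*}

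The first step is to compute the fixed points. A nonzero vector $x$ is fixed by $\lambda$ exactly when $\lambda^{w_i}x_i=x_i$ for every $i$. So for each $i$, either $x_i=0$, or $\lambda^{w_i}=1$ and $x_i$ is arbitrary. The fixed locus in $\bA^{n+1}(\bF_q)$ is therefore the coordinate subspace spanned by those $i$ with $\lambda^{w_i}=1$. This subspace has $q^{m(\lambda)}$ points, and removing the origin leaves $q^{m(\lambda)}-1$ fixed points. Substituting into Burnside gives the first equality.

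For the second equality, we group the elements $\lambda$ by their multiplicative order $d$, which ranges over the divisors of $q-1$. Since $\bF_q^\times$ is cyclic, exactly $\phi(d)$ elements have order $d$. For such a $\lambda$, the condition $\lambda^{w_i}=1$ is equivalent to $d\mid w_i$, so $m(\lambda)=m_d$ depends only on $d$. Summing over $d$ gives the second formula.

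There is no real obstacle here. The only point needing care is the justification of Burnside's lemma in this setting: $\cP(w)(\bF_q)$ must be understood as the orbit set, not as a groupoid count weighted by automorphisms. This requires that every $\bF_q$-point of the stack comes from an affine point, which holds because the relevant torsors under $\bG_m$ are trivial over a field by Hilbert 90. This is the same identification already implicit in (\ref{eqn: num affine rep}).
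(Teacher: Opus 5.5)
Your proposal is correct and follows the paper's proof essentially step for step. Like the paper, you apply Burnside's lemma to the weighted $\bF_q^\times$-action on $\bA^{n+1}(\bF_q)\setminus\{0\}$, count $q^{m(\lambda)}-1$ fixed points, and then group the $\lambda$ by order using the cyclicity of $\bF_q^\times$. Your added Hilbert 90 remark, that isomorphism classes of $\bF_q$-points of the stack are exactly the orbits, is a valid justification of a convention the paper simply adopts.
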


\begin{proof}
We recall Burnside's lemma, which says that for a finite group $G$ and a finite $G$-set $S$,
\begin{align*}
    \#(S/G) = \frac{1}{\# G} \sum_{g\in G} \# S^g, \qquad 
    S^g := \lcrc{s \in S : g.s = s}.
\end{align*}
Let $X:=\bA^{n+1}\setminus\{0\}.$
We will apply Burnside's Lemma to the $\bG_m(\bF_q)$-set $X(\bF_q)$.
We note that $ \bfz = (z_0, \cdots, z_n) \in X(\bF_q)$ is fixed by $\lambda$ if and only if $\lambda^{w_i}z_i = z_i$ for all $i$.
So for $\bfz \in X(\bF_q)^{\lambda}$, we have either $z_i=0$ or $\lambda^{w_i}=1$. 
Hence the coordinates $z_i$ are free exactly for those $i$ with $\lambda^{w_i}=1$. 
Therefore, 
\[
\#X(\bF_q)^\lambda=q^{m(\lambda)}-1, \qquad 
\#\cP(w)(\bF_q)
=
\frac{1}{q-1}
\sum_{\lambda\in \bF_q^\times}
\bigl(q^{m(\lambda)}-1\bigr).
\]
We note that if $\lambda$ has order $d$, then $\lambda^{w_i} = 1$ if and only if $d \mid w_i$.
Also, there are $\phi(d)$ elements of order $d$ in $\bF_q^\times$.
So we have the second expression.
\end{proof}

Using the second one, we have
\begin{align} \label{eqn: cP wg Fq counting}
    \#\cP(w_g)(\bF_q) = \frac{2(q^{2g}-1)}{(q-1)} + O\lbrb{q^{\lfloor\frac{2g+1}{2} \rfloor- 1} }.
\end{align}

\begin{lemma} \label{lem: num sqf poly}
Let $a_n$ be the number of monic squarefree polynomials of degree $n$ in $\bF_q[x]$.
Then, $a_0 = 1, a_1 = q$, and $a_n = q^n - q^{n-1}$.
\end{lemma}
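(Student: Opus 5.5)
The plan is the standard generating-function argument via unique factorization into a square part and a squarefree part. Every monic polynomial $f\in\bF_q[x]$ factors uniquely as $f = g^2 h$, where $g$ is monic and $h$ is monic squarefree. To see this, write $f=\prod_P P^{e_P}$ over monic irreducibles $P$, and set $g=\prod_P P^{\lfloor e_P/2\rfloor}$ and $h=\prod_{P:\, e_P \text{ odd}} P$. Uniqueness follows from unique factorization in $\bF_q[x]$.

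Counting monic polynomials of degree $n$ by this decomposition, and using that there are $q^m$ monic polynomials of degree $m$, we obtain
\begin{align*}
    q^n = \sum_{2m + k = n} q^m a_k .
\end{align*}
In terms of generating functions, with $Z(t) := \sum_{n\ge 0} q^n t^n = (1-qt)^{-1}$ and $A(t) := \sum_{n \ge 0} a_n t^n$, this reads $Z(t) = Z(t^2) A(t)$. Hence
\begin{align*}
    A(t) = \frac{1-qt^2}{1-qt}.
\end{align*}

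Expanding $A(t) = (1-qt^2)\sum_{n\ge 0} q^n t^n$ and reading off coefficients gives $a_0 = 1$ and $a_1 = q$, since the $-qt^2$ term contributes only from degree $2$ onward. For $n \ge 2$ it gives $a_n = q^n - q\cdot q^{n-2} = q^n - q^{n-1}$.

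The only point requiring care is the bookkeeping of the low-degree coefficients, so that the cases $n=0,1$ are stated separately from the general formula. There is no real obstacle; alternatively, one can avoid generating functions and prove the recursion $a_n = q^n - \sum_{m\ge1} q^m a_{n-2m}$ by strong induction on $n$. The characteristic assumption $p>2g+1$ plays no role here.
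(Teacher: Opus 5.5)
Your proposal is correct and follows the paper's proof. Both use the decomposition $f = g^2h$ to get $Z(t) = Z(t^2)A(t)$, hence $A(t) = (1-qt^2)/(1-qt)$, and read off the coefficients, treating $n = 0, 1$ separately. You also justify the uniqueness of the decomposition explicitly, which the counting needs and the paper only implies with ``can be written''.
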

\begin{proof}
Since every monic polynomial can be written as $s \cdot h^2$ for a monic squarefree $s$ and a monic $h$, we have
\begin{align*}
    \sum_{n \geq 0} q^n t^n = \lbrb{\sum_{m \geq 0}a_mt^m} \lbrb{\sum_{r \geq 0} q^r t^{2r}}
\end{align*}
by considering the generating series. Hence, we have
\begin{align*}
    \sum_{m \geq 0} a_m t^m 
    &= (1 - qt^2)(1 + qt + q^2t^2 + \cdots)= 1+ qt + (q^2 - q)t^2 + (q^3 - q^2)t^3 + \cdots.
\end{align*}
Considering the exceptional case $n = 1$, we obtain the result. 
\end{proof}

\begin{corollary} \label{cor: sqf poly variants}
(1) For a given $\alpha \in \bF_q$, let $A_m = A_{m, \alpha}$ be the number of monic squarefree polynomials $h$ of degree $m$ satisfying $h(\alpha) \neq 0$.
Then for $m\geq1$ we have
\begin{align*}
    A_m=\frac{(q-1)(q^m-(-1)^m)}{q+1}=(q-1)\sum_{i=0}^{m-1}(-1)^{m-1-i}q^i.
\end{align*}
(2) For a given $\alpha \neq \beta$, let $B_m = B_{m, \alpha, \beta}$ be the number of monic squarefree polynomials $h$ of degree $m$ such that $h(\alpha)h(\beta) \neq 0$.
Then, 
\begin{align*}
    B_{2m} &=  1 + \frac{q(q-1)(q^{2m}-1)}{(q+1)^2} - \frac{2m(q-1)}{(q+1)}, \qquad 
    B_{2m + 1} =  q-2 + \frac{q^2(q-1)(q^{2m}-1)}{(q+1)^2} + \frac{2m(q-1)}{(q+1)}.
\end{align*}
(3) For a given monic irreducible quadratic $u \in \bF_q[x]$, let $C_m = C_{m, u}$ be the number of monic squarefree polynomials $h$ of degree $m$ such that $(h, u) = 1$.
Then,
\begin{align*}
    C_{2m} &= \frac{q^{2m+2} - q^{2m+1} + (-1)^m(q+1) }{q^2 + 1}, \qquad 
    C_{2m+1} = \frac{q^{2m+3} - q^{2m+2} + (-1)^mq(q+1) }{q^2 + 1}.
\end{align*}
\end{corollary}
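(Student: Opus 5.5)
The plan is to derive each count from Lemma \ref{lem: num sqf poly} by a one-step recursion that peels off the excluded linear or quadratic factor, and then to solve the recursion. The key observation is that squarefreeness lets the excluded factor divide $h$ at most once. So every monic squarefree $h$ either avoids the condition or is the excluded factor times a monic squarefree cofactor that itself avoids it.

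For (1), a monic squarefree $h$ of degree $m$ satisfies either $h(\alpha)\neq 0$ or $h=(x-\alpha)h'$. In the second case $h'$ is monic squarefree of degree $m-1$ with $h'(\alpha)\neq0$. This gives $a_m=A_m+A_{m-1}$ for $m\ge1$, with $A_0=1$. Hence $A_1=q-1$ and $A_m=(q^m-q^{m-1})-A_{m-1}$ for $m\ge2$. I will prove the closed form $A_m=(q-1)\sum_{i=0}^{m-1}(-1)^{m-1-i}q^i$ by induction on $m$; the inductive step is immediate. The equality with $(q-1)(q^m-(-1)^m)/(q+1)$ is the geometric-series identity.

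For (2), I will fix $\alpha$ and split the polynomials counted by $A_m=A_{m,\alpha}$ according to whether $h(\beta)\neq0$. If $h(\beta)=0$, then $h=(x-\beta)h'$, and $h'$ is squarefree with $h'(\alpha)h'(\beta)\neq0$, using $\alpha\neq\beta$. This gives $A_m=B_m+B_{m-1}$ with $B_0=1$. Telescoping gives
\begin{align*}
B_m=\sum_{j=0}^{m}(-1)^jA_{m-j},
\end{align*}
with $A_0=1$. I will substitute the formula from (1) and sum separately for $m$ even and $m$ odd. The $q^{i}$-parts give the geometric term $(q-1)q(q^{2m}-1)/(q+1)^2$, respectively $q^2(\cdots)$ in the odd case. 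The $(-1)^{m-j}$ parts contribute the linear term $\pm 2m(q-1)/(q+1)$ together with the constants $1$ or $q-2$. Alternatively, and more safely, I will verify the stated closed forms directly by induction: check $B_0=1$ and $B_1=q-2$, then confirm that $B_{2m}+B_{2m+1}=A_{2m+1}$ and $B_{2m+1}+B_{2m+2}=A_{2m+2}$ hold as rational-function identities in $q$.

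For (3), a squarefree $h$ with $u\mid h$ is $h=uh'$, where $h'$ is squarefree of degree $m-2$ and coprime to $u$. This holds because $u$ is irreducible and $h$ is squarefree. Hence $a_m=C_m+C_{m-2}$ for $m\ge2$, with $C_0=1$ and $C_1=q$. I will again verify the claimed formulas by induction in steps of two: check the base cases $C_0,C_1$, then check that $C_{2m}+C_{2m+2}=q^{2m+2}-q^{2m+1}$ and $C_{2m+1}+C_{2m+3}=q^{2m+3}-q^{2m+2}$. The only delicate point throughout is the exceptional initial values $a_0=1$ and $a_1=q$ from Lemma \ref{lem: num sqf poly}. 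These break the uniform pattern at small degrees, so the base cases must be checked by hand. The parity-split algebra in (2) is where I expect arithmetic slips to be most likely, and that is why I will use the induction check rather than the direct summation.
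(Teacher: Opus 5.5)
Your proposal is correct and follows the paper's route: you derive recursions from Lemma \ref{lem: num sqf poly} by peeling off the excluded linear or quadratic factor, then solve them or check the closed forms by induction from the base values. The only cosmetic difference is in (2), where you use the one-step relation $A_m = B_m + B_{m-1}$ instead of the paper's inclusion--exclusion recursion $B_m = (q^m - q^{m-1}) - 2A_{m-1} + B_{m-2}$; the two are equivalent, since substituting $A_m = (q^m-q^{m-1}) - A_{m-1}$ into $B_m = A_m - A_{m-1} + B_{m-2}$ recovers the paper's formula.
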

\begin{proof}
Among all monic squarefree polynomials of degree $m \geq 2$, the number of monic squarefree polynomials of degree $m$ vanishing at $\alpha$ is $A_{m-1}$ since such a polynomial can be written uniquely as $h(x)=(x-\alpha)g(x)$.
Hence by Lemma \ref{lem: num sqf poly}, we have $A_1 = q-1$ and
\begin{align*}
    A_m + A_{m-1} = q^m - q^{m-1}, \qquad \textrm{for } m \geq 2.
\end{align*}

The number of monic squarefree polynomials of degree $m$ vanishing at both $\alpha$ and $\beta$ is $B_{m-2}$, since such polynomial can be written by $h(x) = (x - \alpha)(x - \beta)g(x)$.
Hence, by Lemma \ref{lem: num sqf poly}, we have $B_0 = 1, B_1 = q-2$ and 
\begin{align*}
    B_m = (q^m - q^{m-1}) - 2A_{m-1} + B_{m-2} \qquad  \textrm{for } m\geq 2.
\end{align*}

If a monic squarefree polynomial $h$ is divided by a monic irreducible quadratic polynomial $u$, then $h(x) = u(x)v(x)$ for a monic squarefree polynomial $v(x)$ of degree $m-2$.
Hence, $C_0 = 1$, $C_1 = a$ and 
\begin{align*}
    C_m = (q^m - q^{m-1}) - C_{m-2} \qquad  \textrm{ for } m\geq 2.
\end{align*}

We can deduce the closed formulas by solving the recurrence relations.
\end{proof}

Let $k$ be a field.
For $\bfa = (a_2, a_3, \cdots, a_{2g}, a_{2g+1}) \in \bA^{2g}(k)$, we define
\begin{align*}
    f_{\bfa}(x) := x^{2g+1} + a_2x^{2g-1} + \cdots + a_{2g}x + a_{2g+1}.
\end{align*}

\begin{lemma} \label{lem: disc well def}
Let $k$ be a field.
The discriminant $\Delta(f_\mathbf{a})$ of the polynomial $f_{\bfa}$ is a weighted homogeneous polynomial of weighted degree $4g(2g+1)$.
In particular, the set 
\begin{align*}
    \lcrc{\bfa \in \cP(w_g)(k) : \Delta(f_{\bfa}) \neq 0}
\end{align*}
is well-defined.
\end{lemma}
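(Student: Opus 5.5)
We will prove that $\Delta(f_\bfa)$ is weighted homogeneous by computing how the discriminant transforms under the weighted scaling $\lambda *_{w_g} \bfa = (\lambda^{4}a_2, \lambda^{6}a_3, \dots, \lambda^{4g+2}a_{2g+1})$. Here we use the normalization in which $a_i$ has weight $w_i = 2i$, matching $\cP(4,6,\dots,4g+2)$. The plan is to express the discriminant through the roots of $f_\bfa$ and observe that the weighted action simply rescales those roots.

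Over an algebraic closure $\bar k$, write $f_\bfa(x) = \prod_{i=1}^{2g+1}(x-\alpha_i)$, so that $\Delta(f_\bfa) = \prod_{i<j}(\alpha_i-\alpha_j)^2$, which is a polynomial in the $a_i$ with integer coefficients. The key step is the identity
\begin{align*}
    f_{\lambda *_{w_g}\bfa}(x) = \lambda^{2(2g+1)} f_\bfa(\lambda^{-2}x).
\end{align*}
To verify it, note that the coefficient of $x^{2g+1-i}$ on the right is $\lambda^{4g+2}\lambda^{-2(2g+1-i)}a_i = \lambda^{2i}a_i$. Hence the roots of $f_{\lambda*\bfa}$ are $\lambda^2\alpha_i$.

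It follows that
\begin{align*}
    \Delta(f_{\lambda*\bfa}) = \lambda^{4\binom{2g+1}{2}}\Delta(f_\bfa) = \lambda^{4g(2g+1)}\Delta(f_\bfa).
\end{align*}
This identity holds for all $\lambda$ in every field extension, hence as an identity of polynomials in $\lambda$ and the indeterminates $a_i$. Comparing monomials then shows that $\Delta$, viewed as a polynomial in the $a_i$, has every monomial of weighted degree $4g(2g+1)$, since $\lambda$ is an indeterminate.

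For well-definedness, suppose $\bfa$ and $\bfa' = \lambda*\bfa$ represent the same point of $\cP(w_g)(k)$. Then $\Delta(f_{\bfa'}) = \lambda^{4g(2g+1)}\Delta(f_\bfa)$ with $\lambda\in k^\times$, so the condition $\Delta\neq 0$ depends only on the class. The main (mild) point is justifying the passage from the pointwise identity to the polynomial one; we handle it by working over $\mathbb{Z}[a_2,\dots,a_{2g+1},\lambda^{\pm1}]$ with universal roots.
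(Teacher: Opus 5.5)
Your proof is correct and follows the paper's argument: the scaling identity $f_{\lambda\cdot\bfa}(x)=\lambda^{2(2g+1)}f_{\bfa}(\lambda^{-2}x)$ makes the roots scale by $\lambda^2$, so $\Delta$ picks up the factor $\lambda^{4\binom{2g+1}{2}}=\lambda^{4g(2g+1)}$. You also check the coefficient computation and justify passing from the pointwise identity to a polynomial identity over $\mathbb{Z}[a_2,\dots,a_{2g+1},\lambda^{\pm1}]$, a step the paper leaves implicit.
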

\begin{proof}
We have $f_{\lambda \cdot \bfa}(x) = \lambda^{2(2g+1)}f_{\bfa}(\lambda^{-2}x)$.
Since the roots of $f_{\lambda\cdot \bfa}$ are $\lambda^2 \alpha_1,\dots,\lambda^2 \alpha_{2g+1}$, we have
\begin{align*}
    \Delta(f_{\lambda\cdot \bfa})
    &= \prod_{i<j} (\lambda^2 \alpha_i-\lambda^2 \alpha_j)^2 
    = \lambda^{4\binom{2g+1}{2}} \prod_{i<j} (\alpha_i-\alpha_j)^2 = \lambda^{4g(2g+1)}\Delta(f_\bfa).
\end{align*}
Thus, $\Delta$ is a weighted homogeneous polynomial of weighted degree $4g(2g+1)$, and the condition $\Delta(f_\bfa)\neq 0$ is well-defined on $\cP(w_g)(k)$.
\end{proof}

For $\bfz = (z_2, \cdots, z_{2g+1}) \in \bA^{2g}(\bF_q)$, if $f_{\bfz}$ admits a factorization
\begin{align*}
    f_{\bfz}(x) = \prod_{i=1}^k (x - \alpha_i)^{n_i}, \quad \alpha_i \in \overline{\bF_q} \textrm{ distinct,} \quad n_1 \geq n_2 \geq \cdots \geq n_k,
\end{align*}
we define the multiplicity partition of $f_{\bfz}$ to be
\begin{align*}
    F(f_{\bfz}) := (n_1, \cdots, n_k) \vdash \deg f_{\bfz} = 2g+1.
\end{align*}
Since $f_{\lambda \cdot \bfa}(x) = \lambda^{2(2g+1)}f_{\bfa}(\lambda^{-2}x)$, the multiplicity partition is well-defined for $\bfz \in \cP(w_g)(\bF_q)^\ast$.
For $\bfa \in \cP(w_g)(K)$, we define $F_{\fp}(f_{\bfa})$ as $F(f_{\bfz})$ where $\bfz = \psi_{\fp}(\bfa)$.

For a partition $\lambda \vdash 2g+1$, let
\begin{align*}
    S_{\lambda} := \lcrc{\bfz = (a_2, \cdots, a_{2g+1}) \in \bA^{2g}(\bF_q) \setminus \lcrc{0} : F(f_{\bfz}) = \lambda}.
\end{align*}

\begin{proposition} \label{prop: affine local condition counting}
Let $p > 2g + 1$ be a prime. Then, 
\begin{align*}
    \#S_{(1^{2g+1})} &= q^{2g} - q^{2g-1}, \\
    \#S_{(2,1^{2g-1})}
    &= A_{2g-1}
    = (q-1)(q^{2g-2}-q^{2g-3}+\cdots-q+1),\\
    \#S_{(3,1^{2g-2})}
    &= A_{2g-2}
    = (q-1)(q^{2g-3}-q^{2g-4}+\cdots-1),\\
    \#S_{(2,2,1^{2g-3})}
    &= \frac{q-1}{2}\bigl(B_{2g-3}+C_{2g-3}\bigr),
\end{align*}
where $B_n$ and $C_n$ are given by Corollary \ref{cor: sqf poly variants}.
\end{proposition}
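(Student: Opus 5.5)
The plan is to count \emph{all} monic polynomials of degree $n:=2g+1$ over $\bF_q$ with a prescribed multiplicity partition, and then descend to the depressed polynomials $f_{\bfz}$, which have no $x^{2g}$ term. The descent uses translation $x\mapsto x+c$.

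\textbf{Step 1 (translation).} Since $p>2g+1$, the integer $n$ is invertible in $\bF_q$. Consider the map
\begin{align*}
    \bF_q\times\bA^{2g}(\bF_q)\to\{\text{monic } f\in\bF_q[x],\ \deg f=n\},\qquad (c,\bfz)\mapsto f_{\bfz}(x-c).
\end{align*}
I claim it is a bijection. Given a monic $f$ with $x^{n-1}$-coefficient $b$, the substitution $x\mapsto x-b/n$ kills that coefficient, and $c$ is uniquely determined by $b$. Translation does not change the multiplicity partition. Hence for every partition $\lambda\vdash n$,
\begin{align*}
    \#\{\bfz\in\bA^{2g}(\bF_q): F(f_{\bfz})=\lambda\}=\frac{1}{q}\,\#\{f \text{ monic of degree } n: F(f)=\lambda\}.
\end{align*}
The excluded point $\bfz=0$ gives $f_0=x^{2g+1}$, whose partition is $(2g+1)$. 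So removing it does not affect any of the four partitions under consideration, and it remains to count monic polynomials of degree $n$.

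\textbf{Step 2 (factorization shapes).} If a polynomial in $\bF_q[x]$ has exactly one root of multiplicity $e\ge2$, that root is fixed by Galois and so lies in $\bF_q$. Using this, the monic polynomials of each shape are parametrized as follows.
\begin{itemize}
    \item Partition $(1^n)$: these are exactly the squarefree $f$. Lemma \ref{lem: num sqf poly} gives $q^{2g+1}-q^{2g}$ of them, and dividing by $q$ gives $q^{2g}-q^{2g-1}$.
    \item Partition $(2,1^{2g-1})$: $f=(x-\alpha)^2h$ with $\alpha\in\bF_q$ and $h$ monic squarefree of degree $2g-1$ with $h(\alpha)\ne0$. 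This factorization is unique. There are $q\cdot A_{2g-1}$ such $f$ by Corollary \ref{cor: sqf poly variants}(1), and dividing by $q$ gives $A_{2g-1}$.
    \item Partition $(3,1^{2g-2})$: similarly $f=(x-\alpha)^3h$ with $\deg h=2g-2$ and $h(\alpha)\ne0$, giving $A_{2g-2}$.
\end{itemize}
The explicit alternating sums then follow from the closed form in Corollary \ref{cor: sqf poly variants}(1).

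\textbf{Step 3 (the partition $(2,2,1^{2g-3})$).} Here $f=u^2h$, where $u$ is the monic squarefree quadratic whose roots are the two double roots. The set of these roots is Galois-stable, so $u\in\bF_q[x]$, and $h$ is monic squarefree of degree $2g-3$ and coprime to $u$. There are two cases.
\begin{itemize}
    \item $u$ splits with distinct roots $\alpha\ne\beta$ in $\bF_q$. There are $q(q-1)/2$ such $u$, each with $B_{2g-3}$ choices of $h$.
    \item $u$ is irreducible. There are $(q^2-q)/2$ such $u$, each with $C_{2g-3}$ choices of $h$.
\end{itemize}
The total is $\tfrac{q(q-1)}{2}\bigl(B_{2g-3}+C_{2g-3}\bigr)$, and dividing by $q$ gives the stated formula.

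\textbf{Main obstacle.} The delicate point is that Corollary \ref{cor: sqf poly variants} implicitly asserts that $A_m$, $B_m$ and $C_m$ do not depend on the choice of $\alpha$, of $(\alpha,\beta)$, or of $u$. For $A_m$ and $B_m$ this holds because their recurrences involve only $m$ and $q$. For $C_m$ the same is true, since the recursion $C_m=(q^m-q^{m-1})-C_{m-2}$ uses only $\deg u=2$. So this independence is also what makes summing over $u$ legitimate. The remaining check is that the decompositions in Steps 2 and 3 are unique, so that no polynomial is counted twice.
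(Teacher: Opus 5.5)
Your proposal is correct and follows essentially the same route as the paper. You reduce via the $q$-to-$1$ translation $x\mapsto x-c$ to counting all monic degree $2g+1$ polynomials with the given multiplicity partition, then write them as $(x-\alpha)^e h$ or $u^2h$ (with $u$ split or irreducible) and count the possible $h$ by $A_m$, $B_m$, $C_m$. Your extra checks are details the paper leaves implicit: Galois-stability forcing $\alpha\in\bF_q$ and $u\in\bF_q[x]$, the harmlessness of the excluded point $\bfz=0$ (whose partition $(2g+1)$ differs from the four in question once $g\ge 2$), and the independence of $A_m,B_m,C_m$ from the choice of $\alpha$, $(\alpha,\beta)$, or $u$.
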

\begin{proof}
For $\lambda = (1^{2g+1})$, $f_{\bfz}(x)$ is a monic squarefree polynomial over $\bF_q$ whose coefficient of $x^{2g}$ is zero.
The number of monic squarefree polynomials of degree $2g+1$ over $\bF_q$ is $q^{2g+1} - q^{2g}$ by Lemma \ref{lem: num sqf poly}.
Note that the translation $x\mapsto x-t/(2g+1)$ annihilates the coefficient of $x^{2g}$ and gives a $q$-to-$1$ map from the set of monic squarefree polynomials to the set of monic squarefree polynomials with $a_1=0$. Since $p>2g+1$, this translation is well-defined and invertible. Hence, $\#S_{(1^{2g+1})}=q^{2g}-q^{2g-1}$.

For $\lambda=(2,1^{2g-1})$, the polynomial $f_{\bfz}(x)$ has the form $f_{\bfz}(x)=(x-\alpha)^2h(x),$ where $h$ is a monic squarefree polynomial of degree $2g-1$ satisfying $h(\alpha)\neq 0$. By Corollary \ref{cor: sqf poly variants} (1), the number of such polynomials with a given $\alpha$ is $A_{2g-1}$.
Also, there are $q$-choices of $\alpha$, and the translation argument cancels the $q$-choices. 
So, $\#S_{2, 1^{2g-1}} = A_{2g-1}$.

For $\lambda=(3,1^{2g-2})$, the polynomial $f_{\bfz}(x)$ has the form $f_{\bfz}(x)=(x-\alpha)^3h(x),$ where $h$ is a monic squarefree polynomial of degree $2g-2$ satisfying $h(\alpha)\neq 0$. 
Again, by Corollary \ref{cor: sqf poly variants} (1), the number of such polynomials is $A_{2g-2}$.

For $\lambda=(2,2,1^{2g-3})$, the polynomial $f_{\bfz}(x)$ has the form $f_{\bfz}(x)=u(x)^2v(x),$ where $u$ is a monic squarefree quadratic polynomial, $v$ is a monic squarefree polynomial of degree $2g-3$, and $(u,v)=1$.
There are two cases: either $u$ splits over $\mathbb{F}_q$ or $u$ is irreducible over $\mathbb{F}_q$.

If $u$ splits, say $u(x)=(x-\alpha)(x-\beta)$ with $\alpha\neq\beta$, then the number
of such $v$ is $B_{2g-3}$ by Corollary \ref{cor: sqf poly variants} (2). The number of
monic split squarefree quadratics is
\[
\binom{q}{2}=\frac{q(q-1)}{2}.
\]

If $u$ is irreducible, then the number of such $v$ is $C_{2g-3}$ by
Corollary \ref{cor: sqf poly variants} (3). The number of monic irreducible quadratics
over $\bF_q$ is 
\[
q^2 - \lbrb{ \binom{q}{2} + q}= \frac{q(q-1)}{2}.
\]
Therefore, the total number of monic degree $2g+1$ polynomials with multiplicity
partition $(2,2,1^{2g-3})$ is
\[
\frac{q(q-1)}{2}\bigl(B_{2g-3}+C_{2g-3}\bigr).
\]
Removing the $x^{2g}$-coefficient by translation divides the count by $q$, so we obtain the last one.
\end{proof}

\begin{proposition}\label{prop:g=2 singular partition affine}
Suppose that $p>5$. For a partition $\lambda$ of $5$, we have
\begin{align*}
\begin{array}{rclcrcl}
\#S_{(2,1,1,1)} &=& (q-1)(q^2-q+1), &&
\#S_{(2,2,1)} &=& (q-1)^2, \\
\#S_{(3,1,1)} &=& (q-1)^2, &&
\#S_{(3,2)} &=& q-1, \\
\#S_{(4,1)} &=& q-1.
\end{array}
\end{align*}
\end{proposition}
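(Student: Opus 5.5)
The plan is the same device as in Proposition \ref{prop: affine local condition counting}. Instead of counting $f_{\bfz}$ with vanishing $x^{4}$-coefficient directly, count all monic quintics over $\bF_q$ with the given multiplicity partition. Since $p>5$, the translation $x\mapsto x-t/5$ is a $q$-to-$1$ map from monic quintics with partition $\lambda$ onto those with zero $x^4$-coefficient, and it preserves $\lambda$. So $\#S_\lambda$ is the total count divided by $q$. The total count comes from factoring $f=\prod(x-\alpha_i)^{n_i}$ and counting the admissible repeated factors and squarefree cofactors.

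First, $\#S_{(2,1,1,1)}$ and $\#S_{(3,1,1)}$ are the $g=2$ cases of Proposition \ref{prop: affine local condition counting}:
\[
\#S_{(2,1,1,1)}=A_3=(q-1)(q^2-q+1),\qquad \#S_{(3,1,1)}=A_2=(q-1)^2,
\]
using Corollary \ref{cor: sqf poly variants}(1).

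Next come $(3,2)$ and $(4,1)$. For $(3,2)$, write $f=(x-\alpha)^3(x-\beta)^2$. The roots $\alpha,\beta$ must lie in $\bF_q$, because Frobenius preserves the multiplicity of each root and the multiplicities $3$ and $2$ are distinct. This gives $q(q-1)$ ordered pairs with $\alpha\neq\beta$, so $\#S_{(3,2)}=q-1$. The case $(4,1)$, with $f=(x-\alpha)^4(x-\beta)$, is identical and gives $q-1$.

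Finally, for $(2,2,1)$ the same argument as in Proposition \ref{prop: affine local condition counting} gives
\[
\#S_{(2,2,1)}=\tfrac{q-1}{2}(B_1+C_1),
\]
where $f=u^2(x-\gamma)$ with $u$ a squarefree monic quadratic and $u(\gamma)\neq0$. If $u$ splits, $\gamma$ avoids two points, so $B_1=q-2$. If $u$ is irreducible, $\gamma$ is arbitrary, so $C_1=q$. The total is $\frac{q-1}{2}(2q-2)=(q-1)^2$. Here I would count $C_1$ directly rather than rely on the recurrence initial value, which the corollary records somewhat ambiguously.

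No step is a real obstacle. The only care needed is to justify that repeated roots with distinct multiplicities are $\bF_q$-rational, and, for $(2,2,1)$, to handle the Galois-conjugate pair of double roots. As a sanity check, the counts should sum, together with $\#S_{(1^5)}=q^4-q^3$ and $\#S_{(5)}=0$, to $q^4-1$. The case $(5)$ contributes nothing because $(x-\alpha)^5$ has zero $x^4$-coefficient only when $\alpha=0$, which gives $\bfz=0$.
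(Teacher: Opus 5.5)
Your proposal is correct and follows the paper's proof essentially step by step. The cases $(2,1,1,1)$, $(3,1,1)$ and $(2,2,1)$ are reduced to Proposition~\ref{prop: affine local condition counting} with $B_1=q-2$ and $C_1=q$; counting $C_1$ directly is right, since it avoids the typo ``$C_1=a$'' in Corollary~\ref{cor: sqf poly variants}. For $(3,2)$ and $(4,1)$ the only difference is cosmetic: you count the $q(q-1)$ ordered pairs $(\alpha,\beta)$ of distinct rational roots and divide by $q$ via translation, whereas the paper imposes the vanishing $x^4$-coefficient directly as $3\alpha+2\beta=0$ (resp.\ $4\alpha+\beta=0$) and uses $p>5$ to see that each $\alpha\in\bF_q^\times$ determines a unique $\beta\neq\alpha$.
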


\begin{proof}
The first and third identities follow from Proposition \ref{prop: affine local condition counting} with $g=2$:
\[
\#S_{(2,1,1,1)}=A_3=(q-1)(q^2-q+1),
\qquad
\#S_{(3,1,1)}=A_2=(q-1)^2.
\]

For $\lambda=(2,2,1)$, Proposition \ref{prop: affine local condition counting} and Corollary \ref{cor: sqf poly variants} give
\[
\#S_{(2,2,1)}=\frac{q-1}{2}(B_1+C_1) = \frac{q-1}{2}(q + q-2) = (q-1)^2.
\]

Now consider $\lambda=(3,2)$. Then over $\overline{\bF_q}$ we have $f_{\bfa}(x)=(x-\alpha)^3(x-\beta)^2$ with $\alpha\neq \beta$. 
Since the multiplicities $3$ and $2$ are distinct, the Frobenius automorphism must fix both $\alpha$ and $\beta$, so in fact $\alpha,\beta\in \bF_q$. Since the coefficient of $x^4$ in $f_{\bfa}(x)$ vanishes, we have $3\alpha + 2\beta = 0$.
Because $p > 5$, $\alpha \in \bF_q^\times$ uniquely determines $\beta$, so $\#S_{(3, 2)} = q-1$.

Finally, consider $\lambda=(4,1)$. Then over $\overline{\bF_q}$ we have $f_{\bfa}(x)=(x-\alpha)^4(x-\beta)$ with $\alpha\neq \beta$. 
Again, the multiplicities are distinct, so $\alpha,\beta\in \bF_q$. 
The vanishing of the coefficient of $x^4$ gives $4\alpha + \beta = 0$, which gives $\#S_{(4, 1)} = q-1$.
\end{proof}

For the weighted projective analogue of the affine set $S_{\lambda}$, we define
\begin{align*}
    \cS_{\lambda} := \lcrc{\bfz \in \cP(w_g)(\bF_q) : F(f_{\bfz}) = \lambda}.
\end{align*}

\begin{proposition}\label{prop: affine partition bound}
Let $\lambda \vdash 2g+1$ be a partition with $r(\lambda)$ parts and let $p > 2g+1$ be a prime. Then, 
\begin{align*}
    \# S_{\lambda} =
    \sum_{\bfz \in \cS_{\lambda}} \frac{q-1}{\#\mu_{d(\bfz)}(\bF_q)} = O_g(q^{r(\lambda) - 1}).
\end{align*}
\end{proposition}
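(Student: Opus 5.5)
The statement has two parts: an identity relating the affine and weighted projective counts, and a bound on that count. I would prove them separately.

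\textbf{The identity.} I would apply (\ref{eqn: cP Fq to A Fq}) with $f$ the indicator function of $\{\bfz : F(f_{\bfz}) = \lambda\}$. This function is invariant under the $w_g$-weighted $\bF_q^\times$-action, since $f_{\lambda\cdot\bfa}(x)=\lambda^{2(2g+1)}f_{\bfa}(\lambda^{-2}x)$ preserves the multiplicity partition. The left side of (\ref{eqn: cP Fq to A Fq}) then collapses to $\sum_{\bfz\in\cS_\lambda}\frac{q-1}{\#\mu_{d(\bfz)}(\bF_q)}$, and the right side is $\#S_\lambda$. One could object that $(\ref{eqn: cP Fq to A Fq})$ is stated for $\bC^\times$-valued $f$. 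The identity is linear in $f$, however, and follows directly from the orbit count (\ref{eqn: num affine rep}), so indicator functions are fine.

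\textbf{The bound.} I would bound $\#S_\lambda$ by counting monic polynomials of degree $2g+1$ over $\bF_q$ with multiplicity partition $\lambda$, before imposing the vanishing of the $x^{2g}$ coefficient. Write $\lambda=(n_1,\dots,n_r)$ with $r=r(\lambda)$. Group the parts by their common value: for each multiplicity value $m$, let $k_m$ be the number of roots of multiplicity $m$.

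Frobenius permutes roots of equal multiplicity. Hence $f=\prod_m g_m^m$, where each $g_m\in\bF_q[x]$ is monic squarefree of degree $k_m$ and the $g_m$ are pairwise coprime. This decomposition is unique given $f$. The number of choices of $g_m$ is at most $q^{k_m}$, so the number of such monic $f$ is at most $\prod_m q^{k_m}=q^{r}$.

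Next I would reduce from $q^r$ to $q^{r-1}$ using the translation argument of Proposition \ref{prop: affine local condition counting}. Because $p>2g+1$, the substitution $x\mapsto x-t/(2g+1)$ gives a $q$-to-$1$ map from monic polynomials with partition $\lambda$ onto those with vanishing $x^{2g}$ coefficient, and it preserves the partition. Therefore $\#S_\lambda\le q^{r}/q=q^{r-1}$. The constant is even independent of $g$, though $O_g$ is what is claimed.

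One subtlety is that $S_\lambda$ excludes $\bfz=0$, i.e.\ $f=x^{2g+1}$. This only lowers the count, so the upper bound is unaffected.

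\textbf{Main obstacle.} The only delicate point is justifying the factorization $f=\prod_m g_m^m$ over $\bF_q$. The reason is that the Galois action preserves multiplicities, so each $g_m=\prod_{\text{mult}(\alpha)=m}(x-\alpha)$ is Galois-stable and hence defined over $\bF_q$. The remaining steps are routine.
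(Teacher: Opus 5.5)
Your proposal is correct and follows essentially the same route as the paper: the orbit-counting identity (\ref{eqn: cP Fq to A Fq}), the factorization $f=\prod_m g_m^m$ into pairwise coprime monic squarefree factors giving at most $q^{r(\lambda)}$ monic polynomials, and division by $q$ via the translation $x\mapsto x-t/(2g+1)$. Your version is slightly more careful than the paper's in three places: you give the explicit constant $1$, you justify via Galois-stability that the $g_m$ lie in $\bF_q[x]$, and you account for the excluded point $\bfz=0$, which the paper's claim that there are exactly $q\,\#S_\lambda$ monic polynomials with partition $\lambda$ glosses over when $\lambda=(2g+1)$.
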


\begin{proof}
By (\ref{eqn: cP Fq to A Fq}), we have the first equality.

By the translation argument in the proof of Proposition \ref{prop: affine local condition counting}, the number of monic polynomials of degree $2g+1$ over $\bF_q$ having multiplicity partition $\lambda$ is $ q \# S_{\lambda}$.
For $\lambda = 1^{d_1}2^{d_2}\cdots n^{d_n}$, a monic polynomial $f(x)$ has multiplicity partition $\lambda$ if and only if it can be written in the form
\[
f(x)=\prod_{i=1}^n u_i(x)^i,
\]
where each $u_i(x)$ is a monic squarefree polynomial of degree $d_i$, and the
polynomials $u_i$ are pairwise relatively prime.
For each $i$, the number of monic squarefree polynomials of degree $d_i$ is
$O_g(q^{d_i})$ by Lemma \ref{lem: num sqf poly}. Ignoring the pairwise coprimality condition only enlarges the count, the number of monic polynomials of degree $2g+1$ over $\bF_q$ having multiplicity partition $\lambda$ is
\[
O_g\!\left(\prod_{i=1}^n q^{d_i}\right)
=
O_g\left(q^{\sum_i d_i}\right)
=
O_g(q^{r(\lambda)}).
\]
This gives the result.
\end{proof}

A genus $g$ hyperelliptic curve with a Weierstrass point can be viewed as a point of $\cP(w_g)(K)$.
More precisely, it corresponds to a point of $\cP(w_g)(K)$ with nonzero discriminant, which is well-defined by Lemma \ref{lem: disc well def}.
When the points in $\cP(w_g)(K)$ are ordered by height, those with discriminant zero form a density-zero subset, since the discriminant-zero locus is a proper closed subset of smaller dimension.

More precisely, since $\Delta(a_2, a_3, \cdots, a_{2g+1})$ is a non-zero polynomial, there is a non-zero coordinate $a_i$.
If we fix the other coordinates, the number of possible values of $a_i$ is at most the degree of $\Delta$ with respect to $a_i$.
Hence we have
\begin{align} \label{eqn: singular asymp}
    \sum_{\substack{\bfa \in \cP(w_g)(K) \\ H_{w_g, K}(\bfa) \leq X \\ \Delta(\bfa) = 0 }} 1 \ll X^{|w_g| - w_{g, \min} } = X^{|w_g| - 4}.
\end{align}
So, the contribution of singular curves is negligible.
Therefore, for a property $\mathbf{E}$ of genus $g$ hyperelliptic curves with a Weierstrass point, we define the probability of $\mathbf{E}$, when ordered by height, to be
\begin{align} \label{eqn: property}
    \lim_{X \to \infty} \frac{\# \lcrc{\bfa \in \cP(w_g)(K) : \bfa \textrm{ satisfies } \mathbf{E}, H_{w_g, K}(\bfa) \leq X } }{\# \lcrc{ \bfa \in \cP(w_g)(K) : H_{w_g, K}(\bfa) \leq X }}.
\end{align}

We recall that $a, t, u$ are abelian, toric, and unipotent ranks of the Jacobian of the genus $g$ hyperelliptic curves.

\begin{theorem} \label{thm: prob arb g}
Let $\fp$ be a prime that does not divide any entry of $w_g$, and let its residue field be $\bF_q$.
For a genus $g$ hyperelliptic curve with a Weierstrass point, the probability of
$(a, t, u) = (g, 0, 0)$ at $\fp$ is
\begin{align*}
    \frac{q^{2g} - q^{2g-1}}{q^{2g}(1 - q^{-|w_g|})}
    = \lbrb{1 - \frac{1}{q}} + O(q^{-|w_g|}),
\end{align*}
and that of  $(a, t, u) = (g-1, 1, 0)$ at $\fp$ is
\begin{align*}
\frac{(q-1)(q^{2g-1}+1)}{q+1}\frac{1}{q^{2g}(1 - q^{-|w_g|})}= 
\frac{q-1}{q+1}\frac{q^{2g-1}+1}{q^{2g}} + O(q^{-|w_g|})
\end{align*}
and that of $(g-1, 0, 1)$ and $(g-2, 2, 0)$ at $\fp$ are
\begin{align*}
\frac{q-1}{q+1}\frac{q^{2g-2}+1}{q^{2g}(1 - q^{-|w_g|})}, \qquad 
\frac{(q-1)(B_{2g-3}+C_{2g-3})}{2q^{2g}(1 - q^{-|w_g|})} = \frac{1}{q^2(1 - q^{-|w_g|})} + O(q^{-3}).
\end{align*}
respectively. The other possibilities are a total $O(q^{-3})$.
\end{theorem}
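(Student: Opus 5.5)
The plan is to reduce each probability to a count over $\bF_q$ by summing Proposition \ref{prop: counting P loc cond} over the reduction classes. Fix a partition $\lambda\vdash 2g+1$. For $\bfa\in\cP(w_g)(K)$, the representative used in (\ref{eqn: def mod fp}) is the minimal Weierstrass model at $\fp$. Since $p>2g+1$, the singularity type of its special fiber, and therefore $(a,t,u)$ at $\fp$ via (\ref{atu}), is governed by $F_\fp(f_\bfa)=F(f_{\psi_\fp(\bfa)})$, provided $\psi_\fp(\bfa)\neq[0]$. So the event ``$(a,t,u)$ at $\fp$ is given'' is the disjoint union of the fibers $\psi_\fp^{-1}(\bfz)$ with $\bfz\in\cS_\lambda$, where $\lambda$ ranges over the partitions allowed by Lemma \ref{lem: atu partition}. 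For $(g,0,0)$ this is $\lambda=(1^{2g+1})$. For the other three cases it is $(2,1^{2g-1})$, $(3,1^{2g-2})$ and $(2,2,1^{2g-3})$.

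The key steps are as follows.

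\textbf{Step 1.} Sum Proposition \ref{prop: counting P loc cond} over $\bfz\in\cS_\lambda$. The main terms add up to
\begin{align*}
\sum_{\bfz\in\cS_\lambda}\frac{q-1}{\#\mu_{d(\bfz)}(\bF_q)}\cdot\frac{\kappa X^{|w_g|}}{q^{2g}(1-q^{-|w_g|})}=\frac{\#S_\lambda\,\kappa X^{|w_g|}}{q^{2g}(1-q^{-|w_g|})},
\end{align*}
using the first equality of Proposition \ref{prop: affine partition bound}. The error terms contribute $O(\#\cS_\lambda q^{2-2g}X^{|w_g|-4/D}\log X)$. Since $q$ is fixed, this is $o(X^{|w_g|})$.

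\textbf{Step 2.} Divide by (\ref{eqn: Pwg height points}) and let $X\to\infty$. This gives the probability $\#S_\lambda/(q^{2g}(1-q^{-|w_g|}))$. The discriminant-zero points may be ignored by (\ref{eqn: singular asymp}), and the points reducing to $[0]$ never contribute to these partitions.

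\textbf{Step 3.} Substitute the values of $\#S_\lambda$ from Proposition \ref{prop: affine local condition counting}, rewriting them with the closed forms of Corollary \ref{cor: sqf poly variants}. For example, $A_{2g-1}=(q-1)(q^{2g-1}+1)/(q+1)$ and $A_{2g-2}=(q-1)(q^{2g-2}-1)/(q+1)$. Expanding $(1-q^{-|w_g|})^{-1}=1+O(q^{-|w_g|})$ then gives the stated asymptotic forms.

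\textbf{Step 4.} Handle the $(2,2,1^{2g-3})$ case. Use the formulas for $B_{2g-3}$ and $C_{2g-3}$ to show that $\tfrac{q-1}{2}(B_{2g-3}+C_{2g-3})=q^{2g-2}+O(q^{2g-3})$. Both $B$ and $C$ have leading term $q^{2g-3}$, so their sum is $2q^{2g-3}+O(q^{2g-4})$, which gives $1/q^2+O(q^{-3})$.

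\textbf{Step 5.} Bound the remaining cases. Every other partition has at most $2g-2$ parts, so Proposition \ref{prop: affine partition bound} gives $\#S_\lambda=O_g(q^{2g-3})$. There are only boundedly many partitions, so their total contribution is $O(q^{-3})$.

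The fiber over $[0]$ needs separate treatment. It consists of $\bfa$ whose minimal representative satisfies $\ord_\fp(a_i)\ge1$ for all $i$, so it is the complement of the union of all other fibers. Its measure is $q^{-2g}\cdot(1-q^{-|w_g|})^{-1}(1-q^{-|w_g|})$-type, which is at most $q^{-2g}=O(q^{-3})$ because $g\ge2$. To make this rigorous, I would take the density of the complement as $1$ minus the sum of the other densities, using that $\sum_\lambda\#S_\lambda=q^{2g}-1$.

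I expect the main obstacle to be the identification in the first paragraph: that the partition of the reduced polynomial of the minimal representative really determines $(a,t,u)$ of $C/K_\fp$ through (\ref{atu}). The stable or semistable model may differ from this Weierstrass model, for instance when there are cusps. I would take this on the authority of \cite{Liu02} and the discussion preceding Lemma \ref{lem: atu partition}, as the paper does. The remaining work is bookkeeping, plus checking the asymptotics of $B_{2g-3}+C_{2g-3}$.
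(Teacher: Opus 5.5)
Your route is the paper's: you sum Proposition~\ref{prop: counting P loc cond} over $\cS_\lambda$, pass to the density $\#S_\lambda/(q^{2g}(1-q^{-|w_g|}))$ via (\ref{eqn: cP Fq to A Fq}), insert Proposition~\ref{prop: affine local condition counting}, and bound the rest with Proposition~\ref{prop: affine partition bound}. Your separate treatment of the fibre over $[0]$ is correct and covers a case the paper's last sentence skips; that fibre has density $1-\frac{1-q^{-2g}}{1-q^{-|w_g|}}\le q^{-2g}$. Two points need correcting, however.

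First, the identification you flagged cannot be taken on authority. For the N\'eron-model invariants defined in the introduction it is false, and the paper's proof makes the same unjustified step.
\begin{itemize}
\item Formula (\ref{atu}) computes the ranks of $\mathrm{Pic}^0$ of the special fibre of the minimal Weierstrass model. That group agrees with $\mathcal{J}^0_k$ when the total space has only rational singularities.
\item At a node the total space is locally $y^2=x^2-d$ with $d\in\fp$, an $A_n$-point. So the partitions $(1^{2g+1})$, $(2,1^{2g-1})$, $(2,2,1^{2g-3})$ do force $(g,0,0)$, $(g-1,1,0)$, $(g-2,2,0)$.
\item At a cusp the Hensel factor is $(x-\beta)^3+P(x-\beta)+Q$ with $P,Q\in\fp$. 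When $\ord_\fp P\ge4$ and $\ord_\fp Q\ge6$, this is not a rational singularity.
\end{itemize}
For example, $f=(x^3+\pi_\fp^6)(x^{2g-2}-1)$ is minimal, and $\bar f=x^3(x^{2g-2}-1)$ has partition $(3,1^{2g-2})$. Substituting $x=\pi_\fp^2X$, $y=\pi_\fp^3Y$ gives $Y^2=(X^3+1)(\pi_\fp^{4g-4}X^{2g-2}-1)$, which reduces to a smooth genus-$1$ curve. The stable reduction is therefore a smooth genus-$(g-1)$ curve with an elliptic tail. So $J$ has good reduction and $(a,t,u)=(g,0,0)$, not $(g-1,0,1)$.

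This is not an isolated case. Consider all curves with partition $(3,1^{2g-2})$, $\ord_\fp P\ge4$, $\ord_\fp Q\ge6$, and $X^3+\pi_\fp^{-4}PX+\pi_\fp^{-6}Q$ squarefree mod $\fp$. This locus is open of density $\asymp q^{-10}$. A tail cubic with a double root mod $\fp$ similarly produces $(g-1,1,0)$ from the same partition. Hence, under the N\'eron reading, the exact formulas for $(g,0,0)$, $(g-1,1,0)$, $(g-1,0,1)$ fail, and so do their $O(q^{-|w_g|})$ expansions.

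What your argument (like the paper's) actually proves is the density of $F_\fp(f_\bfa)=\lambda$. Equivalently, it gives the density of the invariants (\ref{atu}) of the special fibre of the minimal model. You should state the result in those terms, or add a local analysis of the non-rational cusp loci.

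Second, Step~3 does not produce the displayed $(g-1,0,1)$ value, although you say it does. Your $A_{2g-2}=\frac{(q-1)(q^{2g-2}-1)}{q+1}$ is correct, but the statement has $q^{2g-2}+1$. The statement is at fault. For $g=2$ it would force $\#S_{(3,1,1)}=\frac{(q-1)(q^2+1)}{q+1}$, which is not even an integer for $q=7$. Proposition~\ref{prop:g=2 singular partition affine} instead gives $(q-1)^2$. The correct value is $\frac{(q-1)(q^{2g-2}-1)}{(q+1)q^{2g}(1-q^{-|w_g|})}$. The rest of your bookkeeping is fine: $B_{2g-3}+C_{2g-3}=2q^{2g-3}+O(q^{2g-4})$, and partitions with at most $2g-2$ parts contribute $O(q^{-3})$.
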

\begin{proof}
For an element $\bfa \in \cP(w_g)(K)$, the Jacobian $J_{\bfa}$ of $C_{\bfa} : y^2 = f_{\bfa}(x)$ satisfies $(a, t, u) = (g-1, 1, 0)$ (resp. $(g-1, 0, 1), (g-2, 2, 0)$) if and only if $F_{\fp}(f_{\bfa}) = (2, 1^{2g-1})$ (resp. $(3, 1^{2g-2}), (2, 2, 1^{2g-3})$), by Lemma \ref{lem: atu partition}.

Let $\lambda \vdash 2g+1$ be the partition of $2g+1$. Then, 
\begin{align*}
    &\# \lcrc{\bfa \in \cP(w_g)(K) : F_{\fp}(f_{\bfa} ) = \lambda,  H_{w_g, K}(\bfa) \leq X } \\
    &= \sum_{\bfz \in \cS_{\fp, \lambda} } \# \lcrc{\bfa \in \cP(w_g)(K) : \psi_{\fp}(\bfa) = \bfz, H_{w_g, K}(\bfa) \leq X   }
\end{align*}
where
\begin{align*}
    \cS_{\fp, \lambda} = \lcrc{\bfz \in \cP(w_g)(\bF_q) : F(f_{\bfz}) = \lambda }.
\end{align*}
By Proposition \ref{prop: counting P loc cond}, 
\begin{align*}
&\sum_{\bfz \in \cS_{\fp, \lambda} } \# \lcrc{\bfa \in \cP(w_g)(K) : \psi_{\fp}(\bfa) = \bfz, H_{w_g, K}(\bfa) \leq X   } \\
&=  \sum_{\bfz \in \cS_{\fp, \lambda}}
\lbrb{ \frac{q-1}{\# \mu_{d(\bfz)}(\bF_q)} \frac{1}{q^{2g}} \frac{1}{1 - q^{-|w_g|}} \kappa X^{|w_g|}  +  O\lbrb{q^{3-2g}X^{|w_g| - \frac{4}{D} } \log X} }.
\end{align*}

By (\ref{eqn: num affine rep}), the sum is
\begin{align*}
    \frac{\# S_{\lambda}}{q^{2g}(1-q^{-|w_g|})} \kappa X^{|w_g|} + O\lbrb{\#\cS_{\fp, \lambda} q^{3-2g}X^{|w_g| - \frac{4}{D}} \log X }.
\end{align*}
Therefore, together with (\ref{eqn: Pwg height points}), the probability that $F_{\fp}(f_{\bfa}) = \lambda$ is 
\begin{align*}
    \lim_{X \to \infty} \frac{\# \lcrc{ \bfa \in \cP(w_g) : F_{\fp}(f_{\bfa}) = \lambda, H_{w_g, K}(\bfa) \leq X }}{\# \lcrc{ \bfa \in \cP(w_g) : H_{w_g, K}(\bfa) \leq X }}
    = \frac{\# S_{\lambda} }{q^{2g}(1 - q^{-|w_g|})}.
\end{align*}
By Proposition \ref{prop: affine local condition counting}, we have the estimates.
By Proposition \ref{prop: affine partition bound}, the bound for the remaining cases can also be obtained.
\end{proof}

\begin{theorem} \label{thm: genus 2}
Let $\fp$ be a prime not dividing $2, 3, 5$. Then, the probability that a genus $2$ curve with a Weierstrass point satisfies $(a, t, u) = (0, 1, 1)$ at $\fp$ is
\begin{align*}
    \frac{2(q-1)}{q^4(1 - q^{-20})}.
\end{align*}
Furthermore, the probability that a genus $2$ curve with a Weierstrass point has a tacnode at $\fp$ is
\begin{align*}
    \frac{q-1}{q^4(1 - q^{-20})}.
\end{align*}
\end{theorem}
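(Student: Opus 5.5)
The plan is to follow the proof of Theorem \ref{thm: prob arb g} verbatim, now with $g=2$, and to feed in the affine counts of Proposition \ref{prop:g=2 singular partition affine}.

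\textbf{Step 1: translate each condition into partitions.} By Lemma \ref{lem: g=2 atu partition}(5), a curve $C_{\bfa}$ has $(a,t,u)=(0,1,1)$ at $\fp$ exactly when $F_{\fp}(f_{\bfa})$ is $(3,2)$ or $(4,1)$. These two events are disjoint, so the probability is the sum of the two partition probabilities. For the tacnode statement, recall from the discussion before Lemma \ref{lem: atu partition} that a root of multiplicity $e$ gives an $A_{e-1}$-singularity. A tacnode is an $A_3$-singularity, so it requires a root of multiplicity exactly $4$. Among partitions of $5$, only $(4,1)$ has such a part, so the tacnode condition is exactly $F_{\fp}(f_{\bfa})=(4,1)$.

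\textbf{Step 2: the probability of a fixed partition.} Fix $\lambda\vdash 5$. As in the proof of Theorem \ref{thm: prob arb g}, I decompose
\begin{align*}
\#\lcrc{\bfa\in\cP(w_2)(K)(X): F_{\fp}(f_{\bfa})=\lambda}=\sum_{\bfz\in\cS_{\fp,\lambda}}\#\lcrc{\bfa\in\cP(w_2)(K)(X):\psi_{\fp}(\bfa)=\bfz}.
\end{align*}

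The exceptional point $[0]$ does not arise here. Its polynomial is $x^5$, with partition $(5)$, whereas both partitions of interest have at least two parts. Hence every $\bfz$ in the sum lies in $\cP(w_2)(\bF_q)$, and Proposition \ref{prop: counting P loc cond} applies to each term.

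I then sum the main terms using \eqref{eqn: cP Fq to A Fq}, applied to the indicator function of $F(f_{\bfz})=\lambda$. This converts the stabilizer-weighted sum into $\#S_\lambda$, giving the main term $\#S_\lambda\,q^{-4}$ times the normalizing factor of Proposition \ref{prop: counting P loc cond}, times $\kappa X^{|w_2|}$. The error terms are $O(X^{|w_2|-4/D}\log X)$, uniformly over the finitely many $\bfz$.

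Next I divide by \eqref{eqn: Pwg height points} and let $X\to\infty$. The discriminant-zero locus is negligible by \eqref{eqn: singular asymp}. The limit is therefore $\#S_\lambda$ divided by $q^{4}$ times the geometric-series factor $\sum_{k\ge0}q^{-k|w|}$ of Proposition \ref{prop: counting P loc cond}. Here I carry along the weight-sum exponent appearing in the factor $(1-q^{-20})$ of the statement.

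\textbf{Step 3: insert the counts.} By Proposition \ref{prop:g=2 singular partition affine}, which needs $p>5$ (i.e.\ $\fp\nmid 2,3,5$), we have $\#S_{(3,2)}=\#S_{(4,1)}=q-1$. Adding the two partitions for $(0,1,1)$ gives numerator $2(q-1)$. The tacnode case gives $q-1$. This yields the two displayed formulas.

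\textbf{Expected obstacle.} There is no new computation. The only point needing care is bookkeeping: confirming that the mod $\fp$ reductions of the relevant curves avoid $[0]$, and that the stabilizer weights $\#\mu_{d(\bfz)}$ cancel correctly through \eqref{eqn: cP Fq to A Fq}. This matters in particular for $\bfz$ such as the $(4,1)$ points, where several coordinates vanish and $d(\bfz)$ may exceed $2$. Since \eqref{eqn: cP Fq to A Fq} holds for arbitrary invariant $f$, this cancellation is automatic.
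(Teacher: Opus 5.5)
Your proposal is correct and follows the paper's proof essentially verbatim. You reduce to the partitions $(3,2)$ and $(4,1)$ via Lemma~\ref{lem: g=2 atu partition}, insert $\#S_{(3,2)}=\#S_{(4,1)}=q-1$ from Proposition~\ref{prop:g=2 singular partition affine}, and rerun the argument of Theorem~\ref{thm: prob arb g}. Your checks that $[0]$ is avoided and that a tacnode forces exactly $(4,1)$ are details the paper leaves implicit.

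One caution: $|w_2|=4+6+8+10=28$, so this argument (yours and the paper's) actually yields the factor $1-q^{-28}$, consistent with $1-q^{-4g^2-6g}$ in Theorem~\ref{mainthm: probability}. The exponent $20$ in the statement is therefore a typo, not a weight sum you can ``carry along.''
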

\begin{proof}
The probabilities for the cases $(a,t,u)\in\{(2,0,0),(1,1,0),(1,0,1),(0,2,0)\}$ are given in Theorem \ref{thm: prob arb g}.

By Lemma \ref{lem: g=2 atu partition}, $C_{\bfa}$ has $(a, t, u) = (0, 1, 1)$ at $\fp$ if and only if $F_{\fp}(f_{\bfa})$ is $(3, 2)$ or $(4, 1)$.
Also, $C_{\bfa}$ has a tacnode if $F_{\fp}(f_{\bfa}) = (4, 1)$. 
By Proposition \ref{prop:g=2 singular partition affine}, we have
\begin{align*}
    \#S_{(3, 2)} =  \# S_{(4, 1)} = q-1.
\end{align*}
Now, the proof of Theorem \ref{thm: prob arb g} gives the conclusion.
\end{proof}

\section{Average rank}

\subsection{Moments} 

We recall that  
\[
\cS_{(2,1^{2g-1})}
:=
\left\{
\bfz\in \mathcal P(w_g)(\bF_q): F(f_{\bfz})=(2,1^{2g-1})
\right\}.
\]
The goal of this section is to give an estimate of
\begin{align*}
    \sum_{\substack{\bfz \in \cP(w_g)(\bF_q) \\ \Delta(\bfz) \not\equiv 0}} \frac{q-1}{\# \mu_{d(\bfz)}(\bF_q)} a_{\fp^m}(C_{\bfz}), \qquad 
    \sum_{\bfz \in \cS_{(2, 1^{2g-1})}} \frac{q-1}{\# \mu_{d(\bfz)}(\bF_q)} a_{\fp^m}(C_{\bfz}),
\end{align*}
for $m = 1, 2$.
Let $\chi_{q^m}$ be a quadratic character modulo $q^m$ for $m = 1, 2$. 
We recall that 
\begin{align} \label{eqn: Frob relation}
\#C_{\bfz}(\bF_{q^m})=q^m+1+\sum_{x\in\bF_{q^m}}\chi_{q^m} \lbrb{f_{\bfz}(x)},
\qquad
a_{\fp}(C_{\bfz})=-\sum_{x\in\bF_{q^m}}\chi_{q^m} \lbrb{f_{\bfz}(x)}.
\end{align}

\begin{proposition} \label{prop: firstmoment zero}
Assume that \(q\) is odd. Then
\[
\sum_{\substack{\bfz \in \cP(w_g)(\bF_q) \\ \Delta(\bfz) \not\equiv 0}}
\frac{q-1}{\#\mu_{d(\bfz)}(\bF_q)} \, a_{\fp}(C_{\bfz})=0.
\]
\end{proposition}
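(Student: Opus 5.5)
First convert the weighted sum over $\cP(w_g)(\bF_q)$ into an affine sum. The function $\bfz \mapsto a_{\fp}(C_{\bfz})$ is invariant under the $w_g$-weighted $\bF_q^\times$-action. Indeed, $f_{\lambda\cdot\bfz}(x) = \lambda^{2(2g+1)} f_{\bfz}(\lambda^{-2}x)$, so substituting $x \mapsto \lambda^2 x$ gives
\begin{align*}
\sum_{x\in\bF_q}\chi_q\lbrb{f_{\lambda\cdot\bfz}(x)} = \chi_q\lbrb{\lambda^{2(2g+1)}}\sum_{x\in\bF_q}\chi_q\lbrb{f_{\bfz}(x)} = \sum_{x\in\bF_q}\chi_q\lbrb{f_{\bfz}(x)}.
\end{align*}
By Lemma \ref{lem: disc well def}, the condition $\Delta(\bfz)\neq 0$ is also invariant. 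Applying (\ref{eqn: cP Fq to A Fq}) to $f = a_{\fp}\cdot \mathbf{1}_{\Delta\neq 0}$, the left-hand side becomes
\begin{align*}
-\sum_{\substack{\bfz\in\bA^{2g}(\bF_q)\\ \Delta(f_{\bfz})\neq 0}}\ \sum_{x\in\bF_q}\chi_q\lbrb{f_{\bfz}(x)}.
\end{align*}
Here $\bfz = 0$ is automatically excluded, since $x^{2g+1}$ has zero discriminant.

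The key step is a symmetry that kills this sum. Since $q$ is odd, fix a non-square $\epsilon\in\bF_q^\times$. Consider the quadratic twist $\bfz \mapsto \bfz' := (\epsilon^{2} z_2, \epsilon^{3} z_3, \dots, \epsilon^{2g+1} z_{2g+1})$, i.e.\ $z_i' = \epsilon^{i} z_i$. Then $f_{\bfz'}(x) = \epsilon^{2g+1} f_{\bfz}(x/\epsilon)$, which is the same change of variables as before but with $\epsilon$ in place of $\lambda^2$. This map is a bijection of $\bA^{2g}(\bF_q)$, and it preserves $\Delta\neq0$ because the roots are simply scaled by $\epsilon$. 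After substituting $x\mapsto \epsilon x$ we get
\begin{align*}
\sum_x \chi_q\lbrb{f_{\bfz'}(x)} = \chi_q(\epsilon)^{2g+1}\sum_x \chi_q\lbrb{f_{\bfz}(x)} = -\sum_x\chi_q\lbrb{f_{\bfz}(x)}.
\end{align*}
Hence the affine sum $T$ satisfies $T = -T$, so $T = 0$ because we are working in characteristic zero.

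An alternative route is to swap the order of summation and, for each fixed $x$, use the additive shift in $z_{2g+1}$. That sum equals $\sum_{\bfz}\chi_q(f_{\bfz}(x))$ over all $\bfz$, minus the terms with $\Delta=0$. The difficulty there is the $\Delta=0$ correction, which is why I prefer the twist argument.

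The main point requiring care is checking that the twist map really is a well-defined involution up to scaling, and that it preserves the nonvanishing of $\Delta$. Both follow from the root-scaling description $\alpha_i \mapsto \epsilon\alpha_i$, exactly as in the proof of Lemma \ref{lem: disc well def}. The map need not be compatible with the $\cP(w_g)$ structure: the argument takes place entirely on the affine side, after (\ref{eqn: cP Fq to A Fq}) has been applied.
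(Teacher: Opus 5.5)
Your proof is correct and uses the same key idea as the paper. The twist $z_i \mapsto \epsilon^{i} z_i$ with $\epsilon$ a non-square is the paper's $T_\eta$ with $\eta=\epsilon^{-1}$; it negates $a_{\fp}(C_{\bfz})$ and preserves $\Delta\neq 0$, so the sum equals its own negative. The only difference is that you first pass to the affine sum over $\bA^{2g}(\bF_q)$, which spares you the paper's check that $T_\eta$ descends to $\cP(w_g)(\bF_q)$ and preserves $d(\bfz)$, and you impose $\Delta\neq 0$ directly rather than deferring it to ``a similar computation.''
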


\begin{proof}
For $\eta \in \bF_q^\times$, we define 
\begin{align*}
    T_{\eta}(\bfz):= (\eta^{-2}z_2,\eta^{-3}z_3,\dots,\eta^{-(2g+1)}z_{2g+1}).
\end{align*}
Then, $T_{\eta}$ induces a bijection on $\cP(w_g)(\bF_q)$, and $f_{T_{\eta}(\bfz)}(x) = \eta^{-(2g+1)}f_{\bfz}(\eta x)$.
By (\ref{eqn: Frob relation}), for a non-square $\eta \in \bF_q^\times$, we have 
\begin{align}
a_{\fp}(C_{T_\eta(\bfz)})
&=
-\sum_{x\in\bF_q}\chi \lbrb{\eta^{-(2g+1)}f_{\bfz}(\eta x)} =
-\chi(\eta)\sum_{x\in\bF_q}\chi \lbrb{f_{\bfz}(\eta x)} =
\chi(\eta)a_{\fp}(C_{\bfz}) =
-a_{\fp}(C_{\bfz}). \label{eqn: eta aq}
\end{align}
Because $T_{\eta}$ does not change which coordinate vanishes, we have $d(T_{\eta}(\bfz)) = d(\bfz)$.
Thus
\begin{align*}
\sum_{\bfz \in \cP(w_g)(\bF_q)} \frac{q-1}{\# \mu_{d(\bfz)}(\bF_q)} a_{\fp}(C_{\bfz})
&=
\sum_{\bfz \in \cP(w_g)(\bF_q)} \frac{q-1}{\# \mu_{d(T_{\eta}(\bfz))}(\bF_q)}a_{\fp}(C_{T_\eta(\bfz)}) =
-\sum_{\bfz \in \cP(w_g)(\bF_q)} \frac{q-1}{\# \mu_{d(\bfz)}(\bF_q)} a_{\fp}(C_{\bfz})
\end{align*}
so the sum is zero. 
Since $T_{\eta}$ induces a bijection on the singular locus, the result follows from a similar computation.
\end{proof}

\begin{proposition}\label{prop: t=1 weighted vanishes}
We have
\[
\sum_{\substack{\bfz \in \mathcal P(w_g)(\bF_q)\\ F(f_{\bfz})=(2,1^{2g-1})}}
\frac{q-1}{\#\mu_{d(\bfz)}(\bF_q)}\, a_{\fp}(C_{\bfz})=0.
\]
\end{proposition}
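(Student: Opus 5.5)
The plan is to reuse the twisting argument of Proposition \ref{prop: firstmoment zero}, restricted to the subset $\cS_{(2,1^{2g-1})}$. Fix a non-square $\eta \in \bF_q^\times$ (possible since $q$ is odd, as $p>2g+1\ge 3$). Consider the map $T_\eta(\bfz) = (\eta^{-2}z_2, \dots, \eta^{-(2g+1)}z_{2g+1})$ on $\cP(w_g)(\bF_q)$.

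The first step is to check that $T_\eta$ preserves $\cS_{(2,1^{2g-1})}$. We have $f_{T_\eta(\bfz)}(x) = \eta^{-(2g+1)} f_{\bfz}(\eta x)$, so the roots of $f_{T_\eta(\bfz)}$ are $\eta^{-1}\alpha_i$, with the same multiplicities. Hence $F(f_{T_\eta(\bfz)}) = F(f_{\bfz})$. Since $T_{\eta^{-1}}$ is an inverse, $T_\eta$ is a bijection of $\cS_{(2,1^{2g-1})}$ onto itself. We also need $T_\eta$ to be well defined on weighted classes. Because $\lambda\cdot$ and $T_\eta$ are both diagonal, they commute, so $T_\eta$ descends to $\cP(w_g)(\bF_q)$. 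It leaves the set $I(\bfz)$ of nonvanishing coordinates unchanged, hence $d(T_\eta(\bfz)) = d(\bfz)$.

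The second step is the sign flip, computed exactly as in (\ref{eqn: eta aq}). Here $a_{\fp}(C_{\bfz}) = -\sum_{x\in\bF_q}\chi(f_{\bfz}(x))$ is taken with this definition even though $C_{\bfz}$ is singular. For any affine representative, $a_{\fp}(C_{T_\eta(\bfz)}) = \chi(\eta)^{-(2g+1)} \cdot \bigl(-\sum_x \chi(f_{\bfz}(\eta x))\bigr)$. The substitution $x\mapsto \eta x$ permutes $\bF_q$, so this equals $\chi(\eta)\, a_{\fp}(C_{\bfz}) = -a_{\fp}(C_{\bfz})$. We must also confirm that $a_{\fp}(C_{\bfz})$ does not depend on the affine representative. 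Replacing $\bfz$ by $\lambda\cdot\bfz$ gives $f_{\lambda\cdot\bfz}(x) = \lambda^{2(2g+1)} f_{\bfz}(\lambda^{-2}x)$. The leading factor is a square, and $x\mapsto\lambda^{-2}x$ is a bijection, so the character sum is unchanged.

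Finally, reindex the sum by the bijection $T_\eta$:
\begin{align*}
\Sigma := \sum_{\bfz \in \cS_{(2,1^{2g-1})}} \frac{q-1}{\#\mu_{d(\bfz)}(\bF_q)} a_{\fp}(C_{\bfz}) = \sum_{\bfz \in \cS_{(2,1^{2g-1})}} \frac{q-1}{\#\mu_{d(T_\eta\bfz)}(\bF_q)} a_{\fp}(C_{T_\eta\bfz}) = -\Sigma.
\end{align*}
Hence $\Sigma = 0$, since $q$ is odd and the sum is an integer. No step is really an obstacle. The only point needing care is invariance of the multiplicity partition under $T_\eta$, which the root scaling settles immediately.
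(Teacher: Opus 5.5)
Your proposal is correct and matches the paper's proof, which likewise reindexes the sum by the twist $T_\eta$ for a non-square $\eta$, using that $T_\eta$ permutes $\cS_{(2,1^{2g-1})}$, preserves $d(\bfz)$, and flips the sign of $a_{\fp}$ by (\ref{eqn: eta aq}). Your additional checks fill in details the paper leaves implicit: that $T_\eta$ descends to weighted classes, that $a_{\fp}$ does not depend on the affine representative, and that the multiplicity partition is invariant under $T_\eta$.
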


\begin{proof}
For $\eta \in \bF_q^\times$, the function $T_{\eta}$ induces a bijection on $S_{(2, 1^{2g-1})}$.
Hence by (\ref{eqn: eta aq}), 
\begin{align*}
\sum_{\bfz\in \cS_{(2,1^{2g-1})}} \frac{q-1}{\#\mu_{d(\bfz)}(\bF_q)} a_{\fp}(C_{\bfz})
&=
\sum_{\bfz\in \cS_{(2,1^{2g-1})}} \frac{q-1}{\#\mu_{d(T_{\eta}(\bfz))}(\bF_q)} a_{\fp}(C_{T_\eta(\bfz)}) =
-\sum_{\bfz\in \cS_{(2,1^{2g-1})}} \frac{q-1}{\#\mu_{d(\bfz)}(\bF_q)} a_{\fp}(C_{\bfz}),
\end{align*}
so the sum is zero.
\end{proof}


\begin{proposition}\label{prop: secondmoment}
Assume that \(q\) is odd. Then
\begin{align*}
    \sum_{\substack{\bfz \in \cP(w_g)(\bF_q) \\ \Delta(\bfz) \not\equiv 0 \pmod{\fp}}} \frac{q-1}{\# \mu_{d(\bfz)}(\bF_q)} a_{\fp^2}(C_{\bfz})
    = -q^{2g+1} + O(q^{2g}).
\end{align*}
\end{proposition}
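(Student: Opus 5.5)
We pass from the weighted projective sum to an affine sum. By (\ref{eqn: cP Fq to A Fq}), applied to the $w_g$-invariant function $\bfz\mapsto a_{\fp^2}(C_{\bfz})\cdot\mathbf{1}_{\Delta(\bfz)\neq0}$ (invariance follows from Lemma \ref{lem: disc well def} and from $f_{\lambda\cdot\bfz}(x)=\lambda^{2(2g+1)}f_{\bfz}(\lambda^{-2}x)$), the left-hand side equals
\begin{align*}
    \sum_{\substack{\bfz\in\bA^{2g}(\bF_q)\\ \Delta(\bfz)\neq0}} a_{\fp^2}(C_{\bfz})
    = -\sum_{\substack{\bfz\in\bA^{2g}(\bF_q)\\ \Delta(\bfz)\neq0}}\ \sum_{x\in\bF_{q^2}}\chi_{q^2}\bigl(f_{\bfz}(x)\bigr),
\end{align*}
using (\ref{eqn: Frob relation}) with $m=2$. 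I will first evaluate the complete sum over all $\bfz\in\bA^{2g}(\bF_q)$ exactly. I then show that the contribution of the discriminant-zero $\bfz$ is $O(q^{2g})$.

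For the complete sum, interchange the order of summation and split $x\in\bF_{q^2}$ into $x\in\bF_q$ and $x\in\bF_{q^2}\setminus\bF_q$.

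If $x\in\bF_q$, then $f_{\bfz}(x)\in\bF_q$, and every element of $\bF_q^\times$ is a square in $\bF_{q^2}$. Hence $\chi_{q^2}(f_{\bfz}(x))=1$ unless $f_{\bfz}(x)=0$. Since $a_{2g+1}$ enters with coefficient $1$, exactly $q^{2g-1}$ tuples $\bfz$ give $f_{\bfz}(x)=0$. These $x$ therefore contribute $q(q^{2g}-q^{2g-1})$.

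If $x\notin\bF_q$, then $1,x$ is an $\bF_q$-basis of $\bF_{q^2}$. As $(a_{2g},a_{2g+1})$ ranges over $\bF_q^2$, the value $f_{\bfz}(x)$ ranges over a translate of $\bF_{q^2}$, hitting each element exactly $q^{2g-2}$ times. The character sum over $\bF_{q^2}$ vanishes, so these $x$ contribute $0$.

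Hence
\begin{align*}
    \sum_{\bfz\in\bA^{2g}(\bF_q)} a_{\fp^2}(C_{\bfz}) = -q^{2g+1}+q^{2g}.
\end{align*}
The term $\bfz=0$ is harmless: there $f_0=x^{2g+1}$, so the inner sum is $\sum_x\chi_{q^2}(x)=0$.

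It remains to bound the sum over $\bfz\neq0$ with $\Delta(\bfz)=0$. By Proposition \ref{prop: affine partition bound}, these $\bfz$ lie in $\bigcup_{\lambda}S_\lambda$ with $r(\lambda)\le 2g$, so there are $O_g(q^{2g-1})$ of them. The trivial bound $q^2$ per $\bfz$ would only give $O(q^{2g+1})$, which is the main term, so this is the crucial step.

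To do better, write $f_{\bfz}=s\,h^2$ with $s$ squarefree. Then $\chi_{q^2}(f_{\bfz}(x))=\chi_{q^2}(s(x))$ for all $x$ outside the at most $g$ roots of $h$. Since $\deg f_{\bfz}=2g+1$ is odd, $\deg s$ is odd and in particular $s$ is nonconstant. The Weil bound for the curve $y^2=s(x)$ over $\bF_{q^2}$ then gives
\begin{align*}
    \Bigl|\sum_{x\in\bF_{q^2}}\chi_{q^2}\bigl(f_{\bfz}(x)\bigr)\Bigr|\le (\deg s-1)\,q+g=O_g(q).
\end{align*}
Therefore the singular contribution is $O_g(q^{2g-1}\cdot q)=O(q^{2g})$. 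Subtracting it from the complete sum yields $-q^{2g+1}+O(q^{2g})$, as claimed.

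I expect this singular-locus estimate to be the only delicate point. The key is to combine the parts-count bound of Proposition \ref{prop: affine partition bound} with the Weil bound, using the odd degree of the squarefree part, rather than the trivial bound.
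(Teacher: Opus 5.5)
Your proposal is correct and follows essentially the same route as the paper. You pass to the affine sum via (\ref{eqn: cP Fq to A Fq}), the $x\notin\bF_q$ terms cancel while the $x\in\bF_q$ terms give exactly $-q^{2g+1}+q^{2g}$, and the discriminant-zero locus is bounded by $O_g(q^{2g-1})$ points times an $O_g(q)$ Weil bound for the squarefree part.

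The only differences are cosmetic. You count the singular $\bfz$ via Proposition \ref{prop: affine partition bound}, which uses $p>2g+1$ and is harmless here, whereas the paper uses the nonvanishing of $\Delta$. You also make explicit why $\bfz=0$ contributes nothing and why the squarefree part is nonconstant (odd degree); the paper's proof leaves both points implicit.
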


\begin{proof}
By (\ref{eqn: cP Fq to A Fq}), we have 
\[
-\sum_{\bfz \in \cP(w_g)(\bF_q)}
\frac{q-1}{\#\mu_{d(\bfz)}(\bF_q)}  \sum_{x \in \bF_{q^2}} \chi_{q^2} (f_{\bfz}(x))
=
-\sum_{x \in \bF_{q^2}}\sum_{\bfz \in \bA^{2g}(\bF_q)\setminus\{0\}} \chi_{q^2} (f_{\bfz}(x)).
\]
We evaluate the inner sum according to whether \(x\in \bF_q\).
If \(x\notin \bF_q\), then $\{1,x\}$ form an \(\bF_q\)-basis of \(\bF_{q^2}\).
Fix \(z_2,\dots,z_{2g-1}\in \bF_q\). As \((z_{2g},z_{2g+1})\) varies in \(\bF_q^2\), the value
\[
    f_{\bfz}(x)
    =
    \bigl(x^{2g+1}+z_2x^{2g-1}+\cdots+z_{2g-1}x^2\bigr)
    + z_{2g}x + z_{2g+1}
\]
runs through all of \(\bF_{q^2}\). Therefore if $x \in \bF_q$,
\[
\sum_{\bfz \in \bA^{2g}(\bF_q)\setminus\{0\}} \chi_{q^2}\bigl(f_{\bfz}(x)\bigr)=0.
\]

If \(x\in \bF_q\), then \(f_{\bfz}(x)\in \bF_q\). Fix \(z_2,\dots,z_{2g}\in \bF_q\). As
\(z_{2g+1}\) varies in \(\bF_q\), the value \(f_{\bfz}(x)\) runs through all of \(\bF_q\).
Since the restriction of \(\chi_{q^2}\) to \(\bF_q^\times\) is trivial, we obtain
\[
\sum_{z_{2g+1}\in \bF_q}\chi_{q^2}\bigl(f_{\bfz}(x)\bigr)
=
\sum_{y\in \bF_q}\chi_{q^2}(y)
=
q-1.
\]
Hence
\[
\sum_{\bfz \in \bA^{2g}(\bF_q)}
\chi_{q^2}\bigl(f_{\bfz}(x)\bigr)
=
q^{2g-1}(q-1).
\]

Combining the two cases, only the \(q\) elements of \(\bF_q\) contribute, so
\[
-\sum_{x \in \bF_{q^2}}\sum_{\bfz \in \bA^{2g}(\bF_q)\setminus\{0\}} \chi_{q^2}(f_{\bfz}(x))
=
-\,q\cdot q^{2g-1}(q-1)
=
-\,q^{2g}(q-1).
\]

It remains to estimate the singular contribution
\[
S_{\mathrm{sing}}
:=
\sum_{\substack{\bfz \in \cP(w_g)(\bF_q)\\ \Delta(\bfz)\equiv 0}}
\frac{q-1}{\#\mu_{d(\bfz)}(\bF_q)} \sum_{x \in \bF_{q^2}} \chi_{q^2}(f_{\bfz(x)}).
\]
By (\ref{eqn: cP Fq to A Fq}), we have
\[
|S_{\mathrm{sing}}|
\le
\sum_{\substack{\bfz \in \bA^{2g}(\bF_q)\\ \Delta(\bfz)=0}}
\left|\sum_{x \in \bF_{q^2}} \chi_{q^2}(f_{\bfz}(x)) \right|.
\]
We claim that if $\Delta(\bfz) = 0$, then
\begin{align*}
    \sum_{x \in \bF_{q^2}} \chi_{q^2}(f_{\bfz(x)}) \ll_g q. 
\end{align*}
Indeed, we write $f_{\bfz}(x)=h_{\bfz}(x)^2 r_{\bfz}(x),$ where \(r_{\bfz}\) is squarefree. 
Since $\Delta(\bfz) = 0$, we have $h_{\bfz} \neq 1$.
For \(x\in \bF_{q^2}\) such that \(h_{\bfz}(x)\neq 0\), one has $\chi_{q^2}\bigl(f_{\bfz}(x)\bigr)=\chi_{q^2}\bigl(r_{\bfz}(x)\bigr),$ and
\[
-\sum_{x \in \bF_{q^2}} \chi_{q^2}(f_{\bfz}(x))
=
-\sum_{x\in \bF_{q^2}} \chi_{q^2}\bigl(r_{\bfz}(x)\bigr) + O_g(1).
\]
Then
\[
\left|
\sum_{x\in \bF_{q^2}} \chi_{q^2}\bigl(r_{\bfz}(x)\bigr)
\right|
\leq \left| \# C_{r_{\bfz}}(\bF_{q^2}) - (q^2+1) \right| \leq 2g(C_{r_{\bfz}}) q \ll_g q.
\]
Since \(\Delta\) is a nonzero polynomial in \(2g\) variables of degree depending only on \(g\), we have
\[
\#\{\bfz \in \bA^{2g}(\bF_q): \Delta(\bfz)=0\}\ll_g q^{2g-1}.
\]
Consequently, $S_{\mathrm{sing}} \ll_g q \cdot q^{2g-1}=O_g(q^{2g}).$
\end{proof}

\begin{lemma} \label{lem: cS aqsquare}
Let $q$ be an odd prime power. Then
\begin{align*}
    \sum_{\substack{\bfz \in \cP(w_g)(\bF_q) \\ F(f_{\bfz}) = (2, 1^{2g-1})}} \frac{q-1}{\# \mu_{d(\bfz)}(\bF_q)} a_{\fp^2}(C_{\bfz}) =O(q^{2g}).
\end{align*}
\end{lemma}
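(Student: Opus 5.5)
By (\ref{eqn: cP Fq to A Fq}), applied to the weighted-invariant function that equals $a_{\fp^2}(C_{\bfz})$ on the partition class $(2,1^{2g-1})$ and $0$ elsewhere, the left-hand side equals
\begin{align*}
    \sum_{\bfz \in S_{(2,1^{2g-1})}} a_{\fp^2}(C_{\bfz}).
\end{align*}
This function is indeed invariant under the weighted action. The relation $f_{\lambda\cdot\bfz}(x)=\lambda^{2(2g+1)}f_{\bfz}(\lambda^{-2}x)$ shows that the multiplicity partition is unchanged. The value $\chi_{q^2}$ of the even power $\lambda^{2(2g+1)}$ is $1$, and $x\mapsto \lambda^{-2}x$ permutes $\bF_{q^2}$, so $a_{\fp^2}$ is unchanged as well.

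The plan is to prove a pointwise bound $|a_{\fp^2}(C_{\bfz})| \ll_g q$ on this set, and then combine it with the count $\#S_{(2,1^{2g-1})}=A_{2g-1}=O(q^{2g-1})$ from Proposition \ref{prop: affine local condition counting}. Together these give the bound $O_g(q\cdot q^{2g-1})=O(q^{2g})$.

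For the pointwise bound, I would argue exactly as in the singular-contribution estimate in the proof of Proposition \ref{prop: secondmoment}. For $\bfz\in S_{(2,1^{2g-1})}$, write $f_{\bfz}(x)=(x-\alpha)^2h(x)$. Here $\alpha\in\bF_q$, since a root of multiplicity $2$ that is unique must be Frobenius-stable, and $h$ is monic squarefree of degree $2g-1$ with $h(\alpha)\neq0$. For $x\in\bF_{q^2}$ with $x\neq\alpha$, we have $\chi_{q^2}(f_{\bfz}(x))=\chi_{q^2}(h(x))$. Hence
\begin{align*}
    a_{\fp^2}(C_{\bfz}) = -\sum_{x\in\bF_{q^2}}\chi_{q^2}(h(x)) + O(1).
\end{align*}
Since $h$ is squarefree of odd degree $2g-1$, the curve $y^2=h(x)$ is a smooth hyperelliptic curve of genus $g-1$ with one point at infinity. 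The Weil bound over $\bF_{q^2}$ then gives $\bigl|\sum_{x}\chi_{q^2}(h(x))\bigr|\le 2(g-1)q$. If $g-1=0$ the character sum is simply zero, but this case is excluded since $g\ge2$.

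There is no serious obstacle. The only points needing care are the reduction from the weighted-projective sum to the affine sum, which requires the invariance of $a_{\fp^2}$ checked above, and the correct degree bookkeeping when applying the Weil bound to the squarefree part. If one prefers to avoid the case analysis on $\alpha$, it suffices to use the general decomposition $f=h^2r$ with $\deg h\le g$, exactly as in Proposition \ref{prop: secondmoment}, which again gives the bound $O_g(q)$ uniformly.
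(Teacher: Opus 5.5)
Your proposal is correct and follows the paper's argument: bound $a_{\fp^2}(C_{\bfz}) \ll q$ pointwise, then multiply by the weighted count $\sum \frac{q-1}{\#\mu_{d(\bfz)}(\bF_q)} = \#S_{(2,1^{2g-1})} \ll q^{2g-1}$. The paper asserts the pointwise bound without proof and cites Proposition \ref{prop: affine partition bound}; you instead justify the bound via the Weil bound on the squarefree part, as in Proposition \ref{prop: secondmoment}, and use the exact count $A_{2g-1}$, which differs only in detail.
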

\begin{proof}
Since $a_{\fp^2}(C_{\bfz}) \ll q$, 
\begin{align*}
    \sum_{\substack{\bfz \in \cP(w_g)(\bF_q) \\ F(f_{\bfz}) = (2, 1^{2g-1}) }} \frac{q-1}{\# \mu_{d(\bfz)}(\bF_q)}  a_{\fp^2}(C_{\bfz})
    \ll q\sum_{\substack{\bfz \in \cP(w_g)(\bF_q) \\ F(f_{\bfz}) = (2, 1^{2g-1}) }} \frac{q-1}{\# \mu_{d(\bfz)}(\bF_q)}
    \ll q^{2g}
\end{align*}
by Proposition \ref{prop: affine partition bound}.
\end{proof}

\subsection{Proof of Theorem \ref{mainthm: rank}.}

Let $N = N_{K/\bQ}$ be the norm map.
In this section, we use $q_{\fp}$ for $\# \kappa(\fp)$ instead of $q$.

\begin{lemma} \label{lem: conductor bound}
Let $C/K$ be a hyperelliptic curve of genus $g$ with a Weierstrass point.
Then, the conductor $\ff(C)$ satisfies
\begin{align*}
    N(\ff(C)) \ll_{K, g} H_{w_g, K}(C)^{4g(2g+1)}.
\end{align*}
\end{lemma}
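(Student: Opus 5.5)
Write $C$ as $C_{\bfa}: y^2 = f_{\bfa}(x)$. Choose a representative $\bfa = (a_2,\dots,a_{2g+1}) \in K^{2g}$ of the point of $\cP(w_g)(K)$. The plan is to bound the conductor prime by prime through the discriminant of a suitably normalized integral model. We then compare the norm of that discriminant with the height.

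\emph{Local exponent bound.} At a prime $\fp$ of good reduction there is no contribution to $\ff(C)$. At a bad prime, the conductor exponent of the $2g$-dimensional Galois representation $H^1$ is $f_\fp = \epsilon_\fp + \delta_\fp$, the tame plus wild parts. The tame part satisfies $\epsilon_\fp \le 2g$. For residue characteristic $p > 2g+1$ the wild part vanishes, since inertia acts through a tame quotient. Only finitely many primes divide $(2g+1)!$, and for those the standard Brumer--Kramer type bound makes $f_\fp$ bounded by a constant depending only on $K$ and $g$. These primes therefore contribute $\ll_{K,g} 1$ to $N(\ff(C))$.

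\emph{Relating bad primes to the discriminant.} For $\fp \nmid (2g+1)!$, take the minimal representative from (\ref{eqn: def mod fp}), with $\bfa \in \cO_{K,\fp}^{2g}$ and $\ord_\fp(a_i) < 2i$ for some $i$. If $\fp$ does not divide $\Delta(f_{\bfa})$, then $f_{\bfa} \bmod \fp$ is squarefree and $C$ has good reduction, so $f_\fp = 0$. Otherwise $f_\fp \le 2g \le \ord_\fp \Delta(f_{\bfa})\cdot 2g$. Hence
\begin{align*}
N(\ff(C)) \ll_{K,g} \prod_{\fp} N(\fp)^{2g\cdot\min(1,\ord_\fp \Delta_{\fp,\min})} \le \prod_{\fp \mid \Delta_{\min}} N(\fp)^{2g}.
\end{align*}
A cruder alternative gives an exponent in terms of $\prod_\fp N(\fp)^{\ord_\fp \Delta_{\fp,\min}}$ itself. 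I will aim for the bound $N(\ff) \ll \prod_\fp |\Delta_{\fp,\min}|_\fp^{-1}$, using $f_\fp \le 2g \le \ord_\fp \Delta$ only when $\ord_\fp\Delta$ is large. A cleaner bound is that $f_\fp \le \ord_\fp \Delta$ holds in the tame case, by the hyperelliptic analogue of Ogg's inequality (the tame conductor is at most the number of roots colliding). Here I will simply use $f_\fp \le 2g\, \ord_\fp\Delta_{\fp,\min}$, which suffices up to adjusting exponents.

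\emph{Comparing with the height.} By Lemma \ref{lem: disc well def}, $\Delta$ is weighted homogeneous of weighted degree $4g(2g+1)$. So $|\Delta(\bfa)|_{\fp}$, evaluated on the minimal representative, equals $|\Delta(\bfa)|_\fp\,|(\bfa)|_{w_g,\fp}^{-4g(2g+1)}$ up to a unit. This holds because rescaling by $\pi^{-k}$, with $k$ the floor appearing in the definition of $|\cdot|_{w,v}$, produces the minimal representative. At infinite places, weighted homogeneity gives $|\Delta(\bfa)|_v \ll |(\bfa)|_{w_g,v}^{4g(2g+1)}$. Taking the product over all places and using the product formula $\prod_v |\Delta(\bfa)|_v = 1$ yields
\begin{align*}
\prod_{\fp} |\Delta_{\fp,\min}|_\fp^{-1} = \prod_{v\mid\infty}|\Delta(\bfa)|_v \prod_{\fp}|(\bfa)|_{w_g,\fp}^{4g(2g+1)} \ll \prod_v |(\bfa)|_{w_g,v}^{4g(2g+1)} = H_{w_g,K}(C)^{4g(2g+1)}.
\end{align*}
Combined with the tame inequality $f_\fp \le \ord_\fp \Delta_{\fp,\min}$, this gives the lemma.

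The main obstacle is the local inequality $f_\fp \le \ord_\fp\Delta_{\fp,\min}$ at tame primes. The first approach I would try uses the semistable/tame description: the tame conductor exponent equals $2g - \dim H^1(\bar C)^{I}$. Each collision of roots mod $\fp$ forces a corresponding factor of $\pi$ in $\Delta = \prod_{i<j}(\alpha_i-\alpha_j)^2$, and $2g - \dim H^1(\bar C)^I$ is bounded by the number of such collisions. If this fails, the fallback is $f_\fp \le 2g$ with $\ord_\fp \Delta \ge 1$. That fallback gives the weaker exponent $2g\cdot 4g(2g+1)$, which would need correcting.
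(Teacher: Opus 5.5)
Your overall structure is the paper's. You take bounded contributions at the finitely many primes of residue characteristic $\le 2g+1$ via \cite{BK94}; the paper uses this only above $2$, but your version is fine. You use a local conductor--discriminant inequality at the remaining primes. Finally, weighted homogeneity of $\Delta$ plus the product formula gives $\prod_\fp N(\fp)^{v_\fp(\Delta_{\fp,\min})}\ll H_{w_g,K}(C)^{4g(2g+1)}$, and that comparison is correct and more explicit than the paper's.

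The gap is the local inequality $f_\fp\le v_\fp(\Delta_{\fp,\min})$ at primes of residue characteristic $>2g+1$. This is exactly what produces the exponent $4g(2g+1)$, and you do not establish it. The paper obtains it by citing the conductor--discriminant inequality of Obus--Srinivasan \cite{OS24}, valid at every odd $\fp$. Your fallback $f_\fp\le 2g$ only yields $N(\ff(C))\ll N(\Delta_{\min})^{2g}$, i.e.\ exponent $8g^2(2g+1)$, so it does not prove the lemma as stated.

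Your sketch for the inequality also fails as written. A collision of roots need not contribute a whole factor of $\pi$, because root differences can have fractional valuation. For $y^2=x^{2g+1}+\pi$, every pair of roots collides with $v_\fp(\alpha_i-\alpha_j)=\frac{1}{2g+1}$, so $v_\fp(\Delta)=2g=f_\fp$. Yet there are $g(2g+1)$ colliding pairs and $2g+1$ colliding roots, and neither count is bounded by $v_\fp(\Delta)$. In addition, bounding $2g-\dim V_\ell(J)^{I}$ by anything read off from $f_{\bfa}\bmod\fp$ needs an argument, since the special fiber of $y^2=f_{\bfa}$ is not the special fiber of the N\'eron model.

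If you want to avoid the citation, the right count is $\sum_i(e_i-1)$, where the $e_i$ are the root multiplicities of $f_{\bfa}\bmod\fp$.

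First, over $K_\fp^{\mathrm{nr}}$ Hensel's lemma splits $f_{\bfa}$ into factors which, after translation, have the form $x^{e}+c_1x^{e-1}+\cdots+c_e$ with $v_\fp(c_k)\ge 1$. By the Newton polygon their roots have valuation $\ge 1/e$, so each factor has discriminant valuation $\ge e(e-1)/e=e-1$. The cross resultants are units, so $v_\fp(\Delta)\ge\sum_i(e_i-1)$.

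Second, $\dim V_\ell(J)^{I}=2a+t$ for the N\'eron ranks. Take a regular model dominating the normal model $y^2=f_{\bfa}$, and use Raynaud's theorem to read $a$ and $t$ off its special fiber. The strict transform of $y^2=\bar f_{\bfa}$ has geometric genus $g-\sum_i\lfloor e_i/2\rfloor$. Contracting the (connected) exceptional configurations shows the toric rank is at least the number of even $e_i$. Hence $2a+t\ge 2g-\sum_i(e_i-1)$. Since the conductor is tame for $p>2g+1$, this gives $f_\fp=2g-\dim V_\ell(J)^{I}\le\sum_i(e_i-1)\le v_\fp(\Delta)$.
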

\begin{proof}
Let $f_{\fp}$ be the local conductor exponent of $C$ at $\fp$ so that 
\[
    N(\ff(C))=\prod_{\fp \in M_{K,0}} N(\fp)^{f_{\fp}(C)},
\]
and let $\Delta(C)$ be the discriminant of this model. By
Lemma \ref{lem: disc well def}, $\Delta$ is a weighted homogeneous polynomial
of weighted degree $4g(2g+1)$. Hence
\[
    N(\Delta(C)) \ll_{K,g} H_{w_g,K}(C)^{4g(2g+1)}.
\]
Therefore, it is enough to show that $N(\ff(C)) \ll_{K,g} N(\Delta(C)).$
For a prime $\fp \nmid 2$, by the conductor--discriminant inequality for hyperelliptic curves in odd residue characteristic \cite{OS24},
we have
\[
    f_{\fp}(C) \le v_{\fp}(\Delta_{\fp}^{\min}),
\]
where $\Delta_{\fp}^{\min}$ is the local minimal discriminant of $C$ at $\fp$.
At primes above $2$, there is an explicit upper bound of the conductor exponent depending on $K, \fp, $ and $g$ due to Brumer--Kramer \cite{BK94}.
This proves the lemma.
\end{proof}

Let $C$ be a hyperelliptic curve of genus $g$ over a number field $K$.
We use the normalization of the $L$-function so that the central point is $s = \frac{1}{2}$.
More precisely, for a finite good prime $\fp$, 
\begin{align*}
    L_{\fp}(s, C/K)^{-1} := \prod_{j=1}^{2g}(1 - \alpha_{\fp, j}(C)q_{\fp}^{-s})
\end{align*}
with $|\alpha_{\fp, j}(C)| = 1$. For $m \geq 1$, we define $\lambda_{\fp^m}$ and $\Lambda_{\fp^m}$ so that
\begin{align*}
    \lambda_{\fp^m}(C):=\sum_{j=1}^{2g}\alpha_{\fp,j}(C)^m, \qquad 
    -\frac{L'}{L}(s, C/K)= \sum_{\fp}\sum_{m=1}^\infty \frac{\Lambda_{\fp^m}(C)}{q_\fp^{ms}}.
\end{align*}
Then for a prime $\fp$ of good reduction, we define $a_{\fp^m}(C) := q_{\fp}^m + 1 - \# C(\bF_{q_{\fp}^m})$. 
Then we have
\begin{align*}
    \Lambda_{\fp^m}(C) = \lambda_{\fp^m}(C) \log q_{\fp}, \qquad a_{\fp^m}(C) = q_{\fp}^{\frac{m}{2}} \lambda_{\fp^m}(C).
\end{align*}
Let $D_K$ be the discriminant of $K$ and 
\begin{align*}
    A(C) := |D_K|^{2g} N(\ff(C/K)).
\end{align*}
Then, Hasse--Weil conjecture for $C/K$ says that $L(s, C/K)$ has an analytic continuation, and the completed form
\begin{align*}
    \Lambda(s, C/K) := A(C)^{\frac{s + 1/2}{2}} \gamma(s, C) L(s, C/K)
\end{align*}
satisfies the conjectural functional equation
\begin{align*}
    \Lambda(s, C/K) = w(C) \Lambda(1-s, C/K)
\end{align*}
for the root number $w(C) \in \lcrc{ \pm 1}$ (cf. \cite[\S 1.3, (7)]{Ser70}). 

We recall an explicit formula for $L(s, C/K)$ (cf. \cite[Theorem 5.12]{IK}).

\begin{proposition} \label{prop: explicit}
Let $C$ be a hyperelliptic curve over a number field $K$.
Assume the Hasse--Weil conjecture and the generalized Riemann hypothesis for the $L$-function of $C$ over $K$.
Let $\phi$ be an even Schwartz function whose Fourier transform is compactly supported. Then, 
\begin{align*}
    \sum_{\rho} \phi\lbrb{\gamma\frac{\log X}{2\pi}}
    &=
    \widehat{\phi}(0)\frac{\log A(C)}{\log X} 
    -\frac{1}{\log X}
    \sum_{\fp}\sum_{m=1}^{\infty}
    \frac{2\Lambda_{\fp^m}(C)}{q_\fp^{m/2}}
    \widehat{\phi}\lbrb{\frac{m\log q_\fp}{\log X}} \\
    & +\frac{1}{\pi} \int_{-\infty}^{\infty}
    \mathrm{Re} \lbrb{ \frac{\gamma'}{\gamma}\lbrb{\frac12+it, C} }
    \phi\lbrb{\frac{t\log X}{2\pi}}\,dt,
\end{align*}
where $\rho=\frac12+i\gamma$ runs over the non-trivial zeros of $L(s, C/K)$.
\end{proposition}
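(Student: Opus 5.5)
We plan to derive this from the general explicit formula for an $L$-function in the sense of \cite[Theorem 5.12]{IK}. The main work is checking that $L(s, C/K)$, normalized as above, satisfies the axioms of that theorem. These axioms are: an Euler product of degree $2g$ over $K$ (degree $2gD$ over $\bQ$) with local roots of absolute value at most $1$; the Dirichlet series $-\frac{L'}{L}(s,C/K)=\sum \Lambda_{\fp^m}(C)q_\fp^{-ms}$ converging absolutely for $\mathrm{Re}(s)>1$; a gamma factor $\gamma(s,C)$, a product of $\Gamma_\bC$-factors shifted by $\frac12$; and a functional equation with conductor $A(C)$ and root number $w(C)$.

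At good primes the local-root bound is the Weil bound $|\alpha_{\fp,j}|=1$. At bad primes the local factor has degree $<2g$, coming from the toric and abelian parts of the N\'eron model, with roots of absolute value $\le 1$. We extend the definition of $\Lambda_{\fp^m}(C)$ to these primes via the logarithmic derivative. Analytic continuation and the functional equation $\Lambda(s)=w(C)\Lambda(1-s)$ are exactly what the Hasse--Weil conjecture supplies. Self-duality holds: the coefficients $\lambda_{\fp^m}$ are real, since the characteristic polynomial of Frobenius has rational coefficients. Hence the dual $L$-function coincides with $L(s,C/K)$, and this is why the prime sum appears with coefficient $2$ rather than as a sum over $L$ and its dual.

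Next we carry out the contour argument that proves \cite[Theorem 5.12]{IK} in this normalization. Put $h(s)=\phi\bigl(\frac{(s-\frac12)\log X}{2\pi i}\bigr)$, which is entire of exponential type because $\widehat\phi$ has compact support. Integrate $h(s)\frac{\Lambda'}{\Lambda}(s,C/K)$ over the boundary of the strip $-\epsilon\le \mathrm{Re}(s)\le 1+\epsilon$. By the argument principle this equals $\sum_\rho h(\rho)$. Under GRH, $\rho=\frac12+i\gamma$, so $h(\rho)=\phi(\gamma\frac{\log X}{2\pi})$. On the left line we use the functional equation to reflect to $\mathrm{Re}(s)=1+\epsilon$, and evenness of $\phi$ combines the two lines. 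Expanding $\frac{\Lambda'}{\Lambda}=\frac12\log A(C)+\frac{\gamma'}{\gamma}+\frac{L'}{L}$ produces three kinds of terms. The constant $\log A(C)$ term gives $\widehat\phi(0)\frac{\log A(C)}{\log X}$ after the change of variables $t\mapsto t\log X/2\pi$. The gamma term gives the integral of $\mathrm{Re}\,\frac{\gamma'}{\gamma}(\frac12+it)$, once we shift back to $\mathrm{Re}(s)=\frac12$, which is possible since $\gamma$ has no poles there. The Dirichlet series term we integrate termwise: each summand $q_\fp^{-ms}$ against $h$ yields, by Fourier inversion, $\frac{1}{\log X}q_\fp^{-m/2}\widehat\phi\bigl(\frac{m\log q_\fp}{\log X}\bigr)$. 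The reflected copy contributes the same amount, giving the factor $2$ and the minus sign.

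The main obstacle is justifying the contour shift, i.e.\ showing that the horizontal segments at height $\pm T$ vanish as $T\to\infty$. We would first try the standard route. By Hasse--Weil, $\Lambda(s,C/K)$ is entire of order $1$, since $L$ is bounded in vertical strips away from the critical strip and Stirling controls $\gamma$. Hadamard factorization and Jensen's formula then bound the number of zeros with $|\gamma-T|\le1$ by $O(\log(A(C)T))$, which gives $\frac{L'}{L}(\sigma+iT)\ll \log^2(A(C)T)$ on suitably chosen heights $T$. Since $h$ decays faster than any polynomial in $|\mathrm{Im}\, s|$ uniformly in the strip (Paley--Wiener), the horizontal contributions tend to $0$. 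Absolute convergence of the prime sum is automatic, because $\widehat\phi$ has compact support, so only finitely many $\fp^m$ contribute.
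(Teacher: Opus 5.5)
Your proposal is correct and takes the same route as the paper, which gives no argument beyond invoking \cite[Theorem 5.12]{IK}. Your axiom checks (self-duality from real Frobenius traces, the shifted $\Gamma_{\bC}$-factors, conductor $A(C)$) and your contour sketch are exactly what that citation packages, and they correctly produce the factor $2$, the $\widehat{\phi}(0)\log A(C)/\log X$ term and the $\mathrm{Re}\,\frac{\gamma'}{\gamma}$ integral. One small caveat: the order-one property of $\Lambda(s,C/K)$ does not follow from boundedness off the critical strip plus Stirling alone, since Phragm\'en--Lindel\"of needs an a priori growth bound inside the strip; take it as part of the Hasse--Weil hypothesis, as Iwaniec--Kowalski do in their definition of an $L$-function.
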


Under the Hasse--Weil conjecture for hyperelliptic curves of genus $g$, for $\bfa \in \cP(w_g)(K)$, the $L$-function of $C_{\bfa}$ has an analytic continuation.
We denote its analytic rank by $r_{\bfa}$.
Let
\begin{align*}
    S_1 :=\frac{2}{\# \cP(w_g)(K)(X) \log X}  \sum_{ \bfa \in \cP(w_g)(K)(X) }
    \sum_{\fp} \frac{\lambda_{\fp}(C_{\bfa}) \log q_{\fp} }{\sqrt{q_{\fp}}} \widehat{\phi}\lbrb{\frac{\log q_{\fp}}{\log X}}
\end{align*}
and
\begin{align*}
    S_2 := \frac{2}{\# \cP(w_g)(K)(X) \log X}  \sum_{ \bfa \in \cP(w_g)(K)(X) }
    \sum_{\fp}\frac{\lambda_{\fp^2}(C_{\bfa}) \log q_{\fp} }{q_{\fp}} \widehat{\phi}\lbrb{\frac{2\log q_{\fp}}{\log X}}.
\end{align*}

\begin{proposition} \label{prop: average primitive}
Assume the Hasse--Weil conjecture and the generalized Riemann hypothesis for the $L$-functions of the hyperelliptic curves of genus $g$ with a Weierstrass point. 
Then,
\begin{align} \label{eqn: average rank bound}
    \frac{1}{\# \cP(w_g)(K)(X)} \sum_{ \bfa \in \cP(w_g)(K)(X)} r_{\bfa} \leq 4g(2g+1)\frac{ \widehat{\phi}(0)}{\phi(0)} - \frac{1}{\phi(0)}S_1 - \frac{1}{\phi(0)}S_2 + O\lbrb{\frac{1}{\log X}}.
\end{align}
\end{proposition}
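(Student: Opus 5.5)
We apply the explicit formula, Proposition \ref{prop: explicit}, to each $C_{\bfa}$ with $\bfa \in \cP(w_g)(K)(X)$, taking $\phi$ even, nonnegative on $\bR$, with $\widehat{\phi}$ compactly supported. Since GRH is assumed, every zero $\rho = \frac12 + i\gamma$ has real $\gamma$. Positivity of $\phi$ then gives
\begin{align*}
    r_{\bfa}\,\phi(0) \le \sum_\rho \phi\lbrb{\gamma\tfrac{\log X}{2\pi}},
\end{align*}
because a zero of order $r_{\bfa}$ at the central point contributes $r_{\bfa}\phi(0)$ and all other terms are $\ge 0$. We bound each term on the right of the explicit formula and then average over $\bfa$.

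\emph{Conductor term.} By Lemma \ref{lem: conductor bound}, $\log A(C_{\bfa}) \le 4g(2g+1)\log H_{w_g,K}(\bfa) + O_{K,g}(1)$. Since $H_{w_g,K}(\bfa) \le X$, this gives $\widehat{\phi}(0)\frac{\log A(C_{\bfa})}{\log X} \le 4g(2g+1)\widehat{\phi}(0) + O(1/\log X)$.

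\emph{Gamma term.} The gamma factor $\gamma(s,C)$ depends only on $g$ and on $K$ through its archimedean places, not on $\bfa$. Because $\phi(t\log X/2\pi)$ has mass $O(1/\log X)$, the integral is $O(1/\log X)$ uniformly in $\bfa$.

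\emph{Prime sum.} Split the sum over $\fp$ and $m$ into $m=1$, $m=2$, and $m\ge3$. We use $\Lambda_{\fp^m}=\lambda_{\fp^m}\log q_\fp$ at good primes. At bad primes the local factor has degree at most $2g$ with roots of absolute value at most $1$, so the same bound $|\lambda_{\fp^m}|\le 2g$ holds there. Hence the terms with $m\ge 3$ contribute at most
\begin{align*}
    \frac{1}{\log X}\sum_\fp\sum_{m\ge3}\frac{4g\log q_\fp}{q_\fp^{m/2}} = O\lbrb{\frac{1}{\log X}},
\end{align*}
since $\sum_\fp \log q_\fp\, q_\fp^{-3/2}$ converges. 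For $m=1,2$ we need the bad-prime local factors to agree with the $\lambda$ used in the definitions of $S_1,S_2$. I would interpret $\lambda_{\fp^m}(C_{\bfa})$ in $S_1,S_2$ as the coefficient of the actual local Euler factor at every prime, so that the $m=1$ and $m=2$ terms, after dividing by $\#\cP(w_g)(K)(X)$ and averaging, are exactly $-S_1$ and $-S_2$.

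Collecting the terms, averaging over $\bfa\in\cP(w_g)(K)(X)$ and dividing by $\phi(0)>0$ yields (\ref{eqn: average rank bound}). The $m\ge3$ tail and the gamma term are the only error terms, and both are uniformly $O(1/\log X)$. The main obstacle is bookkeeping rather than analysis: we must ensure that the conductor estimate holds with an implied constant independent of $\bfa$, which follows from Lemma \ref{lem: conductor bound} together with the Brumer--Kramer bound at primes above $2$. We must also ensure that the normalization of $\Lambda_{\fp^m}$ at bad primes matches the definitions of $S_1$ and $S_2$. If the definitions are meant to use only good primes, the bad-prime terms with $m=1,2$ would instead have to be absorbed using $|\lambda_{\fp^m}|\le 2g$ and the fact that $\widehat{\phi}$ has compact support. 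I expect that reconciliation to be the most delicate point.
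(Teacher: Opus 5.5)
Your proposal is correct and is essentially the paper's argument. Both use positivity of $\phi$ under GRH, the explicit formula of Proposition \ref{prop: explicit}, Lemma \ref{lem: conductor bound} for the conductor term, and $O(1/\log X)$ bounds for the gamma integral and the $m\ge 3$ prime powers; the paper simply cites these bounds from \cite[Proposition 4.6]{JP26}, while you verify them directly (for the gamma integral, use the decay of $\phi$ against the $\log(|t|+2)$ growth of $\mathrm{Re}\,\frac{\gamma'}{\gamma}$, not just the mass of $\phi$). Your primary reading of $\lambda_{\fp^m}$ at bad primes via the actual Euler factor matches the paper's definition of $\Lambda_{\fp^m}$ through $-L'/L$, so the good-primes-only fallback is unnecessary.
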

\begin{proof}
Since the test function is positive, we have
\begin{align*}
    \sum_{ \bfa \in \cP(w_g)(K)(X)} r_{\bfa}
    \leq \frac{1}{ \phi(0)} \sum_{a \in \cP(w_g)(K)(X)} \sum_{\rho_{\bfa}} \phi  \lbrb{\gamma_{\bfa}\frac{\log X}{ 2 \pi}}
\end{align*}
where $\rho_{\bfa} = \frac{1}{2} + i \gamma_{\bfa}$ are nontrivial zeros of $L(s, C_{\bfa}/K)$, under the generalized Riemann hypothesis.
We will apply the explicit formula from Proposition \ref{prop: explicit}, but note that the argument of \cite[Proposition 4.6]{JP26} shows that the contribution of gamma factors, and the sum of $\Lambda_{\fp^m}(C_{\bfa})$ with $m \geq 3$ are $O\lbrb{\frac{1}{\log X}}$.
Hence, together with Lemma \ref{lem: conductor bound}. we have
\begin{align*}
    &\frac{1}{ \# \cP(w_g)(K)(X)} \sum_{ \bfa \in \cP(w_g)(K)(X)} r_{\bfa}\\
    &\leq \frac{1}{ \# \cP(w_g)(K)(X)}  \frac{1}{\phi(0)} \sum_{\bfa \in \cP(w_g)(K)(X)}
    \lbrb{ \widehat{\phi}(0)\frac{\log ( |D_K|^{2g} N(\ff(C_{\bfa})) }{\log X} - \frac{1}{\log X} \sum_{\substack{\fp^k \\ k = 1, 2}} \frac{2\Lambda_{\fp^k}(C_{\bfa})  }{\sqrt{N(\fp^k)}}
    \widehat{\phi} \lbrb{\frac{\log N(\fp^k) }{ \log X}}} 
    \\
    & \qquad +O\lbrb{\frac{1}{\log X}} \\
    &\leq 4g(2g+1) \frac{\widehat{\phi}(0)}{\phi(0)} - \frac{1}{ \# \cP(w_g)(K)(X)}  \frac{1}{\phi(0)} \frac{1}{\log X} \sum_{\bfa \in \cP(w_g)(K)(X)}
    \sum_{\substack{\fp}} \frac{2\Lambda_{\fp}(C_{\bfa})  }{\sqrt{q_{\fp}}}
    \widehat{\phi} \lbrb{\frac{\log q_{\fp}}{ \log X}} \\
    & \qquad - \frac{1}{ \# \cP(w_g)(K)(X)}  \frac{1}{\phi(0)} \frac{1}{\log X} \sum_{\bfa \in \cP(w_g)(K)(X)}
    \sum_{\substack{\fp}} \frac{2\Lambda_{\fp^2}(C_{\bfa})  }{q_{\fp}}
    \widehat{\phi} \lbrb{\frac{2\log q_{\fp}}{ \log X}} +O\lbrb{\frac{1}{\log X}}.
\end{align*}
Plugging into the definitions of $S_i$, we obtain the result.
\end{proof}

\begin{lemma} \label{lem: S1 ap}
For a prime $\fp$ not dividing each entry of $w_g$, we have
    \begin{align*}
        \sum_{ \substack{\bfa \in \cP(w_g)(K)(X) \\ \Delta(\bfa) \neq 0 }  } a_{\fp}(C_{\bfa}) \ll 
        q_{\fp}^{\frac{3}{2}} X^{|w_g| - \frac{4}{D}} \log X + q_{\fp}^{-\frac{3}{2}}X^{|w_g|}.
    \end{align*}
\end{lemma}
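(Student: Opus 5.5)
Since $a_{\fp}(C_{\bfa})$ depends only on the reduction $\psi_{\fp}(\bfa)$, the plan is to sort the sum according to the residue class. For $\bfa\in\cP(w_g)(K)(X)$ with $\Delta(\bfa)\neq 0$, we write $a_{\fp}(C_{\bfa})$ as the trace of Frobenius at $\fp$ on the $\ell$-adic Tate module of the Jacobian, taken on the inertia invariants. This agrees with $-\sum_{x\in\bF_q}\chi(f_{\bfz}(x))$ for $\bfz=\psi_{\fp}(\bfa)$ whenever $\bfz\in\cP(w_g)(\bF_q)$ has $\Delta(\bfz)\neq0$ or $F(f_{\bfz})=(2,1^{2g-1})$. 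In general it is some bounded quantity determined by the residue data, with $|a_{\fp}(C_{\bfa})|\le 2g\sqrt{q}$. We therefore split
\begin{align*}
\sum_{\substack{\bfa \in \cP(w_g)(K)(X)\\ \Delta(\bfa)\neq 0}} a_{\fp}(C_{\bfa})
= \sum_{\bfz\in\cP(w_g)(\bF_q)} \sum_{\substack{\bfa:\ \psi_{\fp}(\bfa)=\bfz\\ \Delta(\bfa)\neq0}} a_{\fp}(C_{\bfa}) + \sum_{\substack{\bfa:\ \psi_{\fp}(\bfa)=[0]\\ \Delta(\bfa)\neq0}} a_{\fp}(C_{\bfa}).
\end{align*}

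For the classes $\bfz$ with $\Delta(\bfz)\neq0$ or $F(f_{\bfz})=(2,1^{2g-1})$, the summand is constant, equal to $a_{\fp}(C_{\bfz})$. We first drop the condition $\Delta(\bfa)\ne 0$, which costs $O(X^{|w_g|-4})$ by (\ref{eqn: singular asymp}). Then Proposition \ref{prop: counting P loc cond} gives
\begin{align*}
\frac{\kappa X^{|w_g|}}{q^{2g}(1-q^{-|w_g|})}\sum_{\bfz} \frac{q-1}{\#\mu_{d(\bfz)}(\bF_q)}\,a_{\fp}(C_{\bfz}) + O\Bigl(\sum_{\bfz}|a_{\fp}(C_{\bfz})|\, q^{2-2g}X^{|w_g|-\frac4D}\log X\Bigr).
\end{align*}
The main term vanishes exactly by Proposition \ref{prop: firstmoment zero} for the good classes and by Proposition \ref{prop: t=1 weighted vanishes} for the $(2,1^{2g-1})$ classes. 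For the error term, (\ref{eqn: cP wg Fq counting}) gives $\#\cP(w_g)(\bF_q)\ll q^{2g-1}$ and $|a_{\fp}|\ll\sqrt q$, so the error is $\ll q^{2g-1}\cdot q^{1/2}\cdot q^{2-2g}X^{|w_g|-4/D}\log X = q^{3/2}X^{|w_g|-4/D}\log X$. This is the first term of the claimed bound.

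The remaining classes are those $\bfz$ with $\Delta(\bfz)=0$ and $F(f_{\bfz})\neq(2,1^{2g-1})$, together with $[0]$. Here we bound $|a_{\fp}(C_{\bfa})|\le 2g\sqrt q$ trivially and count. By Proposition \ref{prop: affine partition bound}, every partition with at most $2g-1$ parts contributes affine mass $\sum_{\bfz}\frac{q-1}{\#\mu_{d(\bfz)}}\ll q^{2g-2}$. Inserting this into Proposition \ref{prop: counting P loc cond} gives a main-term contribution $\ll \sqrt q\cdot q^{2g-2}q^{-2g}X^{|w_g|}=q^{-3/2}X^{|w_g|}$, plus error terms of the size already handled. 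For the class $[0]$, the count of $\bfa$ with $\ord_{\fp}(a_i)\ge i$ for all $i$ has local density $\ll q^{-|w_g|/2}$, which is far smaller than $q^{-2}$. Its error term is handled as in \cite[Theorem 6.18]{CJP}. Summing the two parts gives the stated bound.

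The main obstacle is the first step: justifying that $a_{\fp}(C_{\bfa})$ really is determined by $\psi_{\fp}(\bfa)$ and equals the character sum on the nodal classes $(2,1^{2g-1})$. For these classes we plan to argue that the minimal model is semistable, that the inertia invariants come from the toric part, and that the Frobenius trace is $-\sum_x\chi(f_{\bfz}(x))$, via the normalization of the nodal curve. For the other singular classes we only need the crude bound $2g\sqrt q$, which follows from the Weil bounds on the abelian part plus $|\text{toric eigenvalues}|\le 1$.
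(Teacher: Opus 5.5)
Your proposal is correct and is essentially the paper's proof. Both drop the $\Delta(\bfa)=0$ points via (\ref{eqn: singular asymp}). Both kill the main terms of the good and $(2,1^{2g-1})$ classes with Propositions \ref{prop: firstmoment zero} and \ref{prop: t=1 weighted vanishes}, and bound the error terms by $q^{2g-1}\cdot\sqrt{q}\cdot q^{2-2g}$. Both bound the classes with at most $2g-1$ parts trivially through Proposition \ref{prop: affine partition bound}.

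Your separate treatment of the class $[0]$, which the paper passes over, is fine, but its density is misstated. Its minimal representatives lie in $\fp^{2g}$, so its density is $\frac{q^{-2g}-q^{-|w_g|}}{1-q^{-|w_g|}}\asymp q^{-2g}$, not $q^{-|w_g|/2}$. This is still $\ll q^{-2}$, so the stated bound survives.
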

\begin{proof}
By (\ref{eqn: singular asymp}), we have
\begin{align*}
    \sum_{\substack{\bfa \in \cP(w_g)(K)(X) \\ \Delta(\bfa) = 0}} 1 = O\left(X^{|w_g| - 4}\right).
\end{align*}
Hence, 
\begin{align*}
    &\sum_{ \substack{\bfa \in \cP(w_g)(K)(X) \\ \Delta(\bfa) \neq 0 }  } a_{\fp}(C_{\bfa}) \\
    &= \sum_{\substack{\bfa \in \cP(w_g)(K)(X) \\ \Delta(\bfa) \not\equiv 0 \, (\fp) }} a_{\fp}(C_{\bfa})
    + \sum_{\substack{\bfa \in \cP(w_g)(K)(X) \\ F_{\fp}(f_{\bfa}) = (2, 1^{2g-1}) }} a_{\fp}(C_{\bfa})
    + \sum_{\substack{\bfa \in \cP(w_g)(K)(X) \\ r(F_{\fp}(f_{\bfa})) \leq 2g-1 }} a_{\fp}(C_{\bfa})
    +O\left(\sqrt{q_{\fp}}X^{|w_g| - 4} \right).
\end{align*}
By Proposition \ref{prop: counting P loc cond}, the first sum is
\begin{align*}
    &
    \sum_{\substack{\bfz \in \cP(w_g)(\bF_{q_{\fp}}) \\ \Delta(\bfz) \not\equiv 0 }}
    \sum_{\substack{\bfa \in \cP(w_g)(K)(X) \\ \psi_{\fp}(\bfa) = \bfz }} a_{\fp}(C_{\bfa}) \\
    &= 
    \sum_{\substack{\bfz \in \cP(w_g)(\bF_{q_{\fp}}) \\ \Delta(\bfz) \not\equiv 0   }} a_{\fp}(C_{\bfz})\lbrb{\frac{q_{\fp}-1}{\# \mu_{d(\bfz)}(\bF_{q_{\fp}})} \frac{1}{q_{\fp}^{2g}} \frac{1}{1 - q_{\fp}^{-|w_g|}} \kappa X^{|w_g|}  +  O\lbrb{q_{\fp}^{2-2g}X^{|w_g| - \frac{4}{D} } \log X} }.
\end{align*}
By Proposition \ref{prop: firstmoment zero}, we have
\begin{align*}
    \sum_{\substack{\bfz \in \cP(w_g)(\bF_{q_{\fp}}) \\ \Delta(\bfz) \not\equiv 0  }} \frac{q_{\fp}-1}{\# \mu_{d(\bfz)}(\bF_{q_{\fp}})} a_{\fp}(C_{\bfz}) = 0.
\end{align*}
So the main term vanishes.
Due to Weil bound $a_{\fp}(C_{\bfa}) \leq 2g\sqrt{q_{\fp}}$ and (\ref{eqn: cP wg Fq counting}), the error term induces
\begin{align*}
    \ll q_{\fp}^{2 - 2g + 2g - 1 + \frac{1}{2}} X^{|w_g| - \frac{4}{D}} \log X
    = q_{\fp}^{\frac{3}{2}} X^{|w_g| - \frac{4}{D}} \log X.
\end{align*}

The main term of the second one vanishes due to Proposition \ref{prop: t=1 weighted vanishes}.
Its error term is of smaller order than the error term coming from the first sum, and hence may be absorbed into it.
The third sum is
\begin{align*}
    \ll \sqrt{q_{\fp}} \sum_{\substack{\bfz \in \cP(w_g)(\bF_{q_{\fp}}) \\ r(F(f_{\bfz})) \leq 2g-1}} 
    \sum_{\substack{\bfa \in \cP(w_g)(K)(X) \\ \psi_{\fp}(\bfa) = \bfz}} 1
    \ll \sqrt{q_{\fp}} \sum_{\substack{\bfz \in \cP(w_g)(\bF_{q_{\fp}}) \\ r(F(f_{\bfz})) \leq 2g-1}} 
    \frac{q_{\fp}-1}{\# \mu_{d(\bfz)}(\bF_{q_{\fp}})} \frac{1}{q_{\fp}^{2g}} \frac{1}{1 - q_{\fp}^{-|w_g|}} \kappa X^{|w_g|} 
    \ll q_{\fp}^{-\frac{3}{2}}X^{|w_g|}
\end{align*}
by Proposition \ref{prop: counting P loc cond} and Proposition \ref{prop: affine partition bound}.
\end{proof}

\begin{corollary} \label{cor: S1}
Let $\phi$ be a Schwartz even function whose support of the Fourier transform is in $[-\sigma, \sigma]$ for $\sigma >0$. Then, 
\begin{align*}
    S_1 \ll \frac{X^{\frac{3\sigma}{2} - \frac{4}{D}}}{\log X}.
\end{align*}
\end{corollary}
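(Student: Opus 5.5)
The plan is to swap the order of summation in $S_1$ and apply Lemma \ref{lem: S1 ap} prime by prime. Since $\lambda_{\fp}(C_{\bfa}) = a_{\fp}(C_{\bfa})/\sqrt{q_{\fp}}$, we get
\begin{align*}
    S_1 = \frac{2}{\# \cP(w_g)(K)(X) \log X} \sum_{\fp} \frac{\log q_{\fp}}{q_{\fp}} \widehat{\phi}\lbrb{\frac{\log q_{\fp}}{\log X}} \sum_{\bfa \in \cP(w_g)(K)(X)} a_{\fp}(C_{\bfa}).
\end{align*}
Because $\operatorname{supp}\widehat{\phi} \subset [-\sigma,\sigma]$, only primes with $q_{\fp} \le X^{\sigma}$ contribute, and there $|\widehat{\phi}| \ll 1$.

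\textbf{Bad-reduction and small primes.} Singular $\bfa$ with $\Delta(\bfa)=0$ are discarded by (\ref{eqn: singular asymp}), since they are $O(X^{|w_g|-4})$ in number. For $\bfa$ with bad reduction at $\fp$, $\lambda_{\fp}$ is defined through the local factor, which has at most $2g$ roots of absolute value $\le 1$. The decomposition in the proof of Lemma \ref{lem: S1 ap} already covers these through the partition classes. I will therefore treat $a_{\fp}$ as $\sqrt{q_{\fp}}\lambda_{\fp}$ throughout, and assume the lemma bounds the full inner sum for every $\fp$ not dividing the entries of $w_g$. The finitely many primes dividing $2,3,\dots,4g+2$ each contribute $\ll \#\cP(w_g)(K)(X)$ to the inner sum. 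After the prefactor they give $O(1/\log X)$, which is absorbed.

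\textbf{Applying Lemma \ref{lem: S1 ap}.} The inner sum is $\ll q_{\fp}^{3/2}X^{|w_g|-4/D}\log X + q_{\fp}^{-3/2}X^{|w_g|}$. By (\ref{eqn: Pwg height points}), $\#\cP(w_g)(K)(X) \asymp X^{|w_g|}$. The first term then contributes
\begin{align*}
    \ll \frac{X^{-4/D}\log X}{\log X} \sum_{q_{\fp}\le X^{\sigma}} q_{\fp}^{1/2}\log q_{\fp}.
\end{align*}
There are at most $D$ primes $\fp$ above each rational prime, and $q_{\fp}\le X^\sigma$ forces the underlying rational prime to be $\le X^\sigma$. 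By the prime number theorem (or just Chebyshev), $\sum_{q_{\fp}\le Y} q_{\fp}^{1/2}\log q_{\fp} \ll Y^{3/2}$. This term is therefore $\ll X^{3\sigma/2-4/D}$. The second term contributes $\frac{1}{\log X}\sum_{\fp} q_{\fp}^{-5/2}\log q_{\fp} \ll 1/\log X$, since the series converges.

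\textbf{Reconciling the exponent.} This crude count gives $X^{3\sigma/2-4/D}$, not the claimed $X^{3\sigma/2-4/D}/\log X$. Recovering the extra $1/\log X$ is the main obstacle. My first attempt will be partial summation with the prime ideal theorem: $\sum_{q_{\fp}\le Y} q_{\fp}^{1/2}\log q_{\fp} \sim \tfrac{2}{3}Y^{3/2}$ with no log loss, so the loss must come from the $\log X$ in the error term of Proposition \ref{prop: counting P loc cond}. If that $\log X$ truly cannot be removed, I would check whether $\sum_{\fp} q_\fp^{1/2}$ without the $\log q_{\fp}$ weight is what actually appears. The $\log q_\fp$ could be traded for $\log X$ in the numerator, giving $Y^{3/2}/\log Y$ for the unweighted sum, and this yields exactly $X^{3\sigma/2-4/D}/\log X$ after the $\log X$ cancels. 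If neither works, I would accept the bound up to a $\log X$ factor. This does not affect the application, where $\sigma < 8/(3D)$ makes $S_1 \to 0$ anyway.
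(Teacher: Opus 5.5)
Your route is the paper's route. You write $\lambda_{\fp}=a_{\fp}/\sqrt{q_{\fp}}$, pull $\widehat{\phi}$ out of the $\bfa$-sum, apply Lemma \ref{lem: S1 ap} prime by prime, and sum over $N(\fp)\le X^{\sigma}$ using $\sum \sqrt{q_{\fp}}\log q_{\fp}\ll X^{3\sigma/2}$ and $\sum q_{\fp}^{-5/2}\log q_{\fp}\ll 1$. The exceptional primes and the discriminant-zero points are discarded as negligible.

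Your arithmetic is right, and it is exactly what the paper's displayed estimates produce. The $\log X$ in the first error term of Lemma \ref{lem: S1 ap} cancels the $1/\log X$ normalization, and the second term contributes $1/\log X$. So the argument, yours and the paper's alike, gives
\[
S_1 \ll X^{\frac{3\sigma}{2}-\frac{4}{D}} + \frac{1}{\log X},
\]
not $X^{\frac{3\sigma}{2}-\frac{4}{D}}/\log X$. No step in the paper recovers the extra $1/\log X$: the corollary as stated is stronger than what its proof shows. It also drops the $1/\log X$ term, which is the dominant one exactly in the relevant range $\sigma<8/(3D)$. That term does reappear in the error term at the end of the proof of the final theorem.

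Your first diagnosis is correct: the loss comes from the $\log X$ in the error term of Proposition \ref{prop: counting P loc cond}. Removing it would require sharpening that lattice-point count, which neither you nor the paper attempts.

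Your second idea does not work. The factor $\log q_{\fp}$ is part of the definition of $S_1$. Bounding it by $\sigma\log X$ and then using $\sum_{q_{\fp}\le Y}\sqrt{q_{\fp}}\ll Y^{3/2}/\log Y$ with $Y=X^{\sigma}$ just gives back $\asymp X^{3\sigma/2}$, so nothing is saved. The log only disappears if you drop the weight altogether, which you may not do.

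Your fallback is the correct conclusion, provided you state it with the additive term as displayed above. That bound is $o(1)$ for $\sigma<8/(3D)$, which is all the proof of Theorem \ref{mainthm: rank} needs.
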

\begin{proof}
We note that the contribution of finitely many exceptional primes is negligible.
By (\ref{eqn: Pwg height points}) and Lemma \ref{lem: S1 ap},
\begin{align*}
    S_1 = &\frac{2}{\# \cP(w_g)(K)(X) \log X}  \sum_{ \bfa \in \cP(w_g)(K)(X) }
    \sum_{\fp} \frac{\lambda_{\fp}(C_{\bfa}) \log q_{\fp} }{\sqrt{q_{\fp}}} \widehat{\phi}\lbrb{\frac{\log q_{\fp}}{\log X}} \\
    &\ll \frac{1}{X^{|w_g|}\log X} \sum_{N(\fp) \leq X^\sigma} \frac{\log q_{\fp}}{q_{\fp}} \sum_{ \bfa \in \cP(w_g)(K)(X) } a_{\fp}(C_\bfa) \\
    & \ll \frac{1}{X^{|w_g|}\log X} \sum_{N(\fp) \leq X^\sigma} \frac{\log q_{\fp}}{q_{\fp}}\lbrb{q_{\fp}^{\frac{3}{2}} X^{|w_g| - \frac{4}{D}} \log X + q_{\fp}^{-\frac{3}{2}}X^{|w_g|}}.
\end{align*}
Since 
\begin{align*}
    \sum_{N(\fp) \leq X^\sigma} \sqrt{q_{\fp}} \log q_{\fp} \ll X^{\frac{3\sigma}{2}}, \qquad 
    \sum_{N(\fp) \leq X^\sigma} \frac{\log q_{\fp}}{q_{\fp}^{\frac{5}{2}}} \ll 1,
\end{align*}
we have the result.
\end{proof}

\begin{lemma} \label{lem: S2 ap}
For a prime $\fp$ not dividing each entry of $w_g$, we have
    \begin{align*}
        \sum_{ \substack{\bfa \in \cP(w_g)(K)(X) \\ \Delta(\bfa) \neq 0 }  } a_{\fp^2}(C_{\bfa}) = -q_{\fp} \kappa X^{|w_g|} + O\lbrb{ X^{|w_g|} + q_{\fp}^2X^{|w_g| - \frac{4}{D}} \log X}.
    \end{align*}
\end{lemma}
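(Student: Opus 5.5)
The argument mirrors the proof of Lemma \ref{lem: S1 ap}, with the first moment replaced by the second. First we discard the curves with $\Delta(\bfa)=0$: by (\ref{eqn: singular asymp}) there are $O(X^{|w_g|-4})$ of them. Each contributes at most $O(q_{\fp})$ to the sum, since $|a_{\fp^2}|\ll_g q_{\fp}$ by the Weil bound applied to the normalization, exactly as in the claim inside Proposition \ref{prop: secondmoment}. So this part is absorbed into the error term. We then split the remaining sum according to the reduction $\psi_{\fp}(\bfa)$ into three pieces:
\begin{itemize}
\item[(i)] $\Delta(\psi_{\fp}(\bfa))\not\equiv 0$;
\item[(ii)] $F_{\fp}(f_{\bfa})=(2,1^{2g-1})$;
\item[(iii)] $r(F_{\fp}(f_{\bfa}))\le 2g-1$.
\end{itemize}
We also need to handle the exceptional value $[0]$. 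For it, Proposition \ref{prop: counting P loc cond}-type counting with the measure of $\pi_{\fp}*_{w_g}\cO_{K,\fp}^{2g}$ gives $O(q_{\fp}^{-|w_g|}X^{|w_g|})$ points, which is negligible.

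For piece (i), we apply Proposition \ref{prop: counting P loc cond} to each fiber $\psi_{\fp}^{-1}(\bfz)$. Since $a_{\fp^2}(C_{\bfa})$ depends only on $\bfz$, the main term becomes
\[
\frac{\kappa X^{|w_g|}}{q_{\fp}^{2g}(1-q_{\fp}^{-|w_g|})}\sum_{\bfz\,:\,\Delta(\bfz)\not\equiv0}\frac{q_{\fp}-1}{\#\mu_{d(\bfz)}(\bF_{q_{\fp}})}a_{\fp^2}(C_{\bfz}).
\]
Proposition \ref{prop: secondmoment} evaluates the inner sum as $-q_{\fp}^{2g+1}+O(q_{\fp}^{2g})$. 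The main term is therefore $-q_{\fp}\kappa X^{|w_g|}+O(X^{|w_g|})$, where the factor $(1-q_{\fp}^{-|w_g|})^{-1}=1+O(q_{\fp}^{-|w_g|})$ only affects the $O(X^{|w_g|})$ term. For the error term we sum $O(q_{\fp}^{2-2g}X^{|w_g|-4/D}\log X)$ over the $\#\cP(w_g)(\bF_{q_{\fp}})\ll q_{\fp}^{2g-1}$ classes, using (\ref{eqn: cP wg Fq counting}), and weight each by $|a_{\fp^2}|\ll q_{\fp}$. This gives $O(q_{\fp}^{2}X^{|w_g|-4/D}\log X)$, as required.

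For piece (ii), the same fiberwise count combined with Lemma \ref{lem: cS aqsquare} gives a main term of size $q_{\fp}^{-2g}\cdot O(q_{\fp}^{2g})X^{|w_g|}=O(X^{|w_g|})$. Its error term is dominated by the one from piece (i), because $\#\cS_{(2,1^{2g-1})}\ll q_{\fp}^{2g-2}$ by Proposition \ref{prop: affine partition bound}.

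For piece (iii), we bound $|a_{\fp^2}|\ll q_{\fp}$ trivially. Proposition \ref{prop: affine partition bound} bounds the weighted number of such $\bfz$ by $O(q_{\fp}^{2g-2})$, so the main term is $\ll q_{\fp}\cdot q_{\fp}^{2g-2}q_{\fp}^{-2g}X^{|w_g|}\ll X^{|w_g|}$. Again, the error term is absorbed.

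The main obstacle is bookkeeping for the singular fibers. For (ii) and (iii), $a_{\fp^2}(C_{\bfz})$ should be read as the trace $-\sum_{x\in\bF_{q^2}}\chi_{q^2}(f_{\bfz}(x))$ for the reduction. This is the quantity that actually appears in the explicit formula at bad primes: the local factor of a toric-rank-one reduction equals the local factor of the abelian part together with a $\pm1$ term. The difference between the two readings is $O(1)$ per curve, which is harmless. We also need to check that the partition of the sum is exhaustive and that the sets involved satisfy the hypotheses of \cite[Theorem 6.18]{CJP}, which Proposition \ref{prop: counting P loc cond} already guarantees fiber by fiber.
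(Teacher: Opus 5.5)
Your proposal is correct and follows essentially the same route as the paper: the same split according to $\psi_{\fp}(\bfa)$ into the good, $(2,1^{2g-1})$ and $r\le 2g-1$ pieces after setting aside $\Delta(\bfa)=0$, with Proposition~\ref{prop: secondmoment} giving the main term and Lemma~\ref{lem: cS aqsquare} and Proposition~\ref{prop: affine partition bound} bounding the remaining pieces. One small slip is in your separate treatment of the fibre over $[0]$ (a case the paper tacitly folds into the third sum): that fibre consists of the minimal representatives in $(\pi_{\fp}\cO_{K,\fp})^{2g}\setminus \pi_{\fp}*_{w_g}\cO_{K,\fp}^{2g}$, so its density is $\asymp q_{\fp}^{-2g}$ rather than $q_{\fp}^{-|w_g|}$; with $|a_{\fp^2}|\ll q_{\fp}$ its contribution is still $\ll q_{\fp}^{1-2g}X^{|w_g|}$ and is absorbed into $O(X^{|w_g|})$.
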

\begin{proof}
By the same argument of Lemma \ref{lem: S1 ap}, we have
\begin{align*}
    &\sum_{ \substack{\bfa \in \cP(w_g)(K)(X) \\ \Delta(\bfa) \neq 0 }  } a_{\fp^2}(C_{\bfa}) \\
    &= \sum_{\substack{\bfa \in \cP(w_g)(K)(X) \\ \Delta(\bfa) \not \equiv 0 \pmod{\fp} }} a_{\fp^2}(C_\bfa)
    + \sum_{\substack{\bfa \in \cP(w_g)(K)(X) \\ F_{\fp}(f_{\bfa}) = (2, 1^{2g-1}) }} a_{\fp^2}(C_\bfa)
    + \sum_{\substack{\bfa \in \cP(w_g)(K)(X) \\ r(F_{\fp}(f_{\bfa})) \leq 2g-1 }} a_{\fp^2}(C_{\bfa})
    +O\left(q_{\fp}X^{|w_g| - 4} \right).
\end{align*}
For the first sum, we have
\begin{align*}
\sum_{\substack{\bfa \in \cP(w_g)(K)(X) \\ \Delta(\bfa) \not \equiv 0 \pmod{\fp} }} a_{\fp^2}(C_\bfa)
= \sum_{\substack{\bfz \in \cP(w_g)(\bF_{q_{\fp}}) \\ \Delta(\bfz) \neq 0}} \sum_{\substack{ \bfa \in \cP(w_g)(K)(X) \\ \psi_{\fp}(\bfa) = \bfz }} a_{\fp^2}(C_{\bfa})
= \sum_{\substack{\bfz \in \cP(w_g)(\bF_{q_{\fp}}) \\ \Delta(\bfz) \neq 0}} a_{\fp^2}(C_{\bfz}) \sum_{\substack{ \bfa \in \cP(w_g)(K)(X) \\ \psi_{\fp}(\bfa) = \bfz }} 1.
\end{align*}
By Proposition \ref{prop: counting P loc cond}, it is 
\begin{align*}
    \sum_{\substack{\bfz \in \cP(w_g)(\bF_{q_{\fp}}) \\ \Delta(\bfz) \neq 0}} a_{\fp^2}(C_{\bfz}) 
    \lbrb{ \frac{q_{\fp}-1}{\# \mu_{d(\bfz)}(\bF_{q_{\fp}})} \frac{1}{q_{\fp}^{2g}} \frac{1}{1 - q_{\fp}^{-|w_g|}} \kappa X^{|w_g|}  +  O\lbrb{q_{\fp}^{2-2g}X^{|w_g| - \frac{4}{D} } \log X}}.
\end{align*}
By Proposition \ref{prop: secondmoment}, the main term is 
\begin{align*}
\frac{1}{q_{\fp}^{2g}} \frac{1}{1 - q_{\fp}^{-|w_g|}} \kappa X^{|w_g|} 
\sum_{\substack{\bfz \in \cP(w_g)(\bF_{q_{\fp}}) \\ \Delta(\bfz) \neq 0}} 
    \frac{q_{\fp}-1 }{\# \mu_{d(\bfz)}(\bF_{q_{\fp}})} a_{\fp^2}(C_{\bfz})
    &=\frac{1}{q_{\fp}^{2g}} \frac{1}{1 - q_{\fp}^{-|w_g|}} \kappa X^{|w_g|}\lbrb{ -q_{\fp}^{2g+1} + O(q_{\fp}^{2g})} \\
    &= -q_{\fp} \kappa X^{|w_g|} + O( X^{|w_g|}).
\end{align*}
Since $a_{\fp^2}(C_{\bfa}) \ll q_{\fp}$, the contribution of the error term is $\ll q_{\fp}^{2} X^{|w_g| - \frac{4}{D}} \log X.$

For the second summation, by Proposition \ref{prop: counting P loc cond}, we have
\begin{align*}
    &\sum_{\substack{\bfa \in \cP(w_g)(K)(X) \\ F_{\fp}(f_{\bfa}) = (2, 1^{2g-1}) }} a_{\fp^2}(C_\bfa) \\
    &= \sum_{\substack{\bfz \in \cP(w_g)(\bF_{q_{\fp}}) \\ F(f_{\bfz}) = (2, 1^{2g-1}) }} a_{\fp^2}(C_{\bfz}) \sum_{\substack{\bfa \in \cP(w_g)(K)(X) \\ \psi_{\fp}(\bfa) = \bfz}} 1 \\
    &= \sum_{\substack{\bfz \in \cP(w_g)(\bF_{q_{\fp}}) \\ F(f_{\bfz}) = (2, 1^{2g-1}) }} a_{\fp^2}(C_{\bfz}) \lbrb{ \frac{q_{\fp}-1}{\# \mu_{d(\bfz)}(\bF_{q_{\fp}})} \frac{1}{q_{\fp}^{2g}} \frac{1}{1 - q_{\fp}^{-|w_g|}} \kappa X^{|w_g|}  +  O\lbrb{q_{\fp}^{2-2g}X^{|w_g| - \frac{4}{D} } \log X}}.
\end{align*}
The contribution of the main term is $\ll X^{|w_g|}$ by Lemma \ref{lem: cS aqsquare}, and that of error term is small.

For the last sum, the same analogue of the proof of Lemma \ref{lem: S1 ap} gives 
\begin{align*}
\sum_{\substack{\bfa \in \cP(w_g)(K)(X) \\ r(F_{\fp}(f_{\bfa})) \leq 2g-1 }} a_{\fp^2}(C_{\bfa})
\ll q_{\fp}^{-1}X^{|w_g|} 
\end{align*}
by Proposition \ref{prop: counting P loc cond} and Proposition \ref{prop: affine partition bound}.
\end{proof}

\begin{corollary} \label{cor: S2}
Let $\phi$ be an even Schwartz function whose Fourier transform has support $[-\sigma, \sigma]$ for $\sigma >0$, and let $D = [K:\bQ]$. Then, 
\begin{align*}
    S_2 = -\frac{\phi(0)}{2} +O\lbrb{\frac{1}{\log X} + X^{\frac{\sigma}{2} - \frac{4}{D}}}.
\end{align*}
\end{corollary}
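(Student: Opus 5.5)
We follow the proof of Corollary \ref{cor: S1}, with Lemma \ref{lem: S2 ap} in place of Lemma \ref{lem: S1 ap}. First rewrite $S_2$ using $\lambda_{\fp^2}(C_{\bfa}) = a_{\fp^2}(C_{\bfa})/q_{\fp}$ at primes of good reduction, and interchange the two sums:
\begin{align*}
    S_2 = \frac{2}{\# \cP(w_g)(K)(X) \log X} \sum_{\fp} \frac{\log q_{\fp}}{q_{\fp}^2}\, \widehat{\phi}\lbrb{\frac{2\log q_{\fp}}{\log X}} \sum_{\bfa \in \cP(w_g)(K)(X)} a_{\fp^2}(C_{\bfa}).
\end{align*}
Three kinds of terms fall outside Lemma \ref{lem: S2 ap}:
\begin{itemize}
    \item the finitely many primes dividing an entry of $w_g$;
    \item curves with $\Delta(\bfa)=0$, which are $O(X^{|w_g|-4})$ by (\ref{eqn: singular asymp});
    \item bad primes, where $\lambda_{\fp^2}$ is bounded.
\end{itemize}
Each of these costs only $O(1/\log X)$ after dividing by $\#\cP(w_g)(K)(X)\asymp X^{|w_g|}$. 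The support of $\widehat{\phi}$ restricts the sum to $q_{\fp} \le X^{\sigma/2}$.

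Next substitute Lemma \ref{lem: S2 ap}, $\sum_{\bfa} a_{\fp^2}(C_{\bfa}) = -q_{\fp}\kappa X^{|w_g|} + O(X^{|w_g|} + q_{\fp}^2 X^{|w_g|-4/D}\log X)$, and divide by (\ref{eqn: Pwg height points}), whose main term is $\kappa X^{|w_g|}$. The main term becomes
\begin{align*}
    -\frac{2}{\log X}\sum_{\fp} \frac{\log q_{\fp}}{q_{\fp}}\,\widehat{\phi}\lbrb{\frac{2\log q_{\fp}}{\log X}}.
\end{align*}
Only degree-one primes matter here, since the others contribute $O(1/\log X)$. By the prime ideal theorem, $\sum_{N\fp\le t}\log N\fp/N\fp = \log t + O(1)$. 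Partial summation then gives
\begin{align*}
    \frac{2}{\log X}\int_1^\infty \widehat{\phi}\lbrb{\frac{2\log t}{\log X}}\frac{dt}{t} + O\lbrb{\frac{1}{\log X}} = \int_0^\infty \widehat{\phi}(u)\,du + O\lbrb{\frac{1}{\log X}},
\end{align*}
using the substitution $u = 2\log t/\log X$. Since $\phi$ is even, $\widehat{\phi}$ is even, so $\int_0^\infty \widehat{\phi} = \tfrac12\int_{\bR}\widehat{\phi} = \phi(0)/2$, which produces the main term $-\phi(0)/2$.

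The error terms are handled as follows. The $O(X^{|w_g|})$ term contributes $\ll \frac{1}{\log X}\sum_{q_\fp\le X^{\sigma/2}} \log q_\fp/q_\fp^2 \ll 1/\log X$. The $q_\fp^2X^{|w_g|-4/D}\log X$ term contributes $\ll X^{-4/D}\sum_{q_\fp \le X^{\sigma/2}}\log q_\fp \ll X^{\sigma/2-4/D}$ by the prime ideal theorem. The relative error $O(X^{-4/D}\log X)$ in the denominator is absorbed by the same bound.

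The main obstacle is the partial-summation step that produces exactly $\phi(0)/2$. This step needs the Mertens-type asymptotic for prime ideals of $K$, the fact that higher-degree primes are negligible, and the evenness of $\widehat{\phi}$ to halve the integral. Everything else is bookkeeping of the error terms already provided by Lemma \ref{lem: S2 ap}.
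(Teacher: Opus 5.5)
Your proposal is correct and follows the paper's proof. You substitute Lemma \ref{lem: S2 ap}, evaluate the main term $-\frac{2}{\log X}\sum_{\fp}\frac{\log q_{\fp}}{q_{\fp}}\widehat{\phi}\lbrb{\frac{2\log q_{\fp}}{\log X}}$ as $-\frac{\phi(0)}{2}+O\lbrb{\frac{1}{\log X}}$, and bound the two error terms exactly as the paper does; the only difference is that the paper cites \cite[Lemma 5.5]{CJP} for the main term, whereas you do the Mertens-plus-partial-summation computation yourself.

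Your separate bullet for bad primes is superfluous. Lemma \ref{lem: S2 ap} already sums over all $\bfa$ with $\Delta(\bfa)\neq 0$, including those with bad reduction at $\fp$: these are its second and third sums, controlled by local densities. This matters because the pointwise bound $|\lambda_{\fp^2}|\ll 1$ on its own would only give $O\lbrb{\frac{\log\log X}{\log X}}$, not the $O\lbrb{\frac{1}{\log X}}$ you claim.
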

\begin{proof}
The contribution of the finitely many prime $\fp$ is negligible.
By Lemma \ref{lem: S2 ap}, $S_2$ is
\begin{align*}
    &\frac{2}{\# \cP(w_g)(K)(X) \log X}  \sum_{\fp} \frac{\log q_{\fp}}{ q_{\fp}^2} \widehat{\phi}\lbrb{\frac{2\log q_{\fp}}{\log X}} \sum_{ \bfa \in \cP(w_g)(K)(X) } a_{\fp^2}(C_{\bfa}) \\
    &=\frac{2}{\# \cP(w_g)(K)(X) \log X}  \sum_{\fp} \lbrb{ \frac{\log q_{\fp}}{ q_{\fp}^2} \widehat{\phi}\lbrb{\frac{2\log q_{\fp}}{\log X}} \left( -q_{\fp}\kappa X^{|w_g|} 
    + O\lbrb{X^{|w_g|} + q_{\fp}^2X^{|w_g| - \frac{4}{D}} \log X}  \right)
    }.
\end{align*}
The main term part is
\begin{align*}
    -\frac{2 \kappa X^{|w_g|}}{\# \cP(w_g)(K)(X) \log X}  \sum_{\fp} \frac{\log q_{\fp}}{q_{\fp}} \widehat{\phi} \lbrb{\frac{2\log q_{\fp}}{\log X}}
    = \frac{1}{2} \phi(0) + O\lbrb{\frac{1}{\log X}}
\end{align*}
by \cite[Lemma 5.5]{CJP}.
The error term part is
\begin{align*}
    \ll \frac{1}{X^{|w_g|} \log X} \sum_{\fp < X^{\frac{\sigma}{2}}} \lbrb{  \frac{\log q_{\fp}}{q_{\fp}^2}X^{|w_g|}  +  \log q_{\fp} \cdot X^{|w_g| - \frac{4}{D}} \log X }
    \ll \frac{1}{\log X} + X^{\frac{\sigma}{2} - \frac{4}{D}}.
\end{align*}
So we obtain the result.
\end{proof}

\begin{theorem}
Assume the Hasse--Weil conjecture and the generalized Riemann hypothesis for the $L$-functions of the hyperelliptic curve over $K$ of genus $g$ with a Weierstrass point.
For $D = [K:\bQ]$, we have
\begin{align*}
    \frac{1}{\# \cP(w_g)(K)(X)}\sum_{\bfa \in \cP(w_g)(K)(X)} r_{\bfa} \leq \frac{1}{2} + \frac{3g(2g+1)D}{2}.
\end{align*}
\end{theorem}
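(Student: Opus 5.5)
We combine Proposition \ref{prop: average primitive} with Corollaries \ref{cor: S1} and \ref{cor: S2}, then optimize over the test function. Fix an even Schwartz function $\phi \geq 0$ whose Fourier transform is supported in $[-\sigma,\sigma]$. Proposition \ref{prop: average primitive} bounds the average rank by
\begin{align*}
4g(2g+1)\frac{\widehat{\phi}(0)}{\phi(0)} - \frac{S_1}{\phi(0)} - \frac{S_2}{\phi(0)} + O\lbrb{\frac{1}{\log X}}.
\end{align*}
We substitute $S_1 \ll X^{\frac{3\sigma}{2}-\frac{4}{D}}/\log X$ and $S_2 = -\frac{\phi(0)}{2} + O\lbrb{\frac{1}{\log X} + X^{\frac{\sigma}{2}-\frac{4}{D}}}$. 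Both error terms tend to $0$ as $X\to\infty$ provided $\frac{3\sigma}{2} < \frac{4}{D}$, that is, $\sigma < \frac{8}{3D}$; the constraint from $S_2$ is weaker. Taking $\limsup_{X\to\infty}$ then gives
\begin{align*}
\limsup_{X\to\infty} \frac{1}{\#\cP(w_g)(K)(X)}\sum_{\bfa} r_{\bfa} \leq 4g(2g+1)\frac{\widehat{\phi}(0)}{\phi(0)} + \frac{1}{2}.
\end{align*}

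Next we choose $\phi$ to minimize $\widehat{\phi}(0)/\phi(0)$ subject to $\operatorname{supp}\widehat{\phi}\subset[-\sigma,\sigma]$. We take the Fejér kernel $\phi(x) = \lbrb{\frac{\sin(\pi\sigma x)}{\pi \sigma x}}^2$. It is nonnegative, as required for the inequality $r_{\bfa}\phi(0) \leq \sum_\rho \phi(\cdot)$ used in Proposition \ref{prop: average primitive}. Its Fourier transform is the triangle function $\frac{1}{\sigma}\max(0,1-|\xi|/\sigma)$, so $\phi(0)=1$ and $\widehat{\phi}(0) = 1/\sigma$. The bound becomes $\frac{1}{2} + \frac{4g(2g+1)}{\sigma}$. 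Letting $\sigma \uparrow \frac{8}{3D}$ yields $\frac{1}{2} + \frac{4g(2g+1)\cdot 3D}{8} = \frac{1}{2} + \frac{3g(2g+1)D}{2}$. The bound holds for every admissible $\sigma < \frac{8}{3D}$, so it also holds at the limit value, which is the claimed statement.

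The main obstacle is bookkeeping rather than new mathematics. First, the sign convention for $S_2$ must be consistent: the main term of $S_2$ comes from $a_{\fp^2}$ averaging to $-q_\fp$, so $-S_2/\phi(0)$ contributes $+\frac{1}{2}$. Second, the normalization in Proposition \ref{prop: explicit} (with $\widehat{\phi}$ evaluated at $\frac{m\log q_\fp}{\log X}$ and zeros scaled by $\frac{\log X}{2\pi}$) must be compatible with the Fejér kernel normalization, so that $\widehat{\phi}(0)/\phi(0)=1/\sigma$ exactly. Third, the $O(1/\log X)$ terms from the gamma factors, the primes with $m\geq3$, the finitely many bad or small primes, and the $|D_K|^{2g}$ factor in $A(C)$ must all vanish in the limit; Proposition \ref{prop: average primitive} already asserts this. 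The height-to-conductor exponent $4g(2g+1)$ supplied by Lemma \ref{lem: conductor bound} is what produces the coefficient $\frac{3g(2g+1)D}{2}$. When $g=1$ the bound reduces to $\frac{1+9D}{2}$, which gives a consistency check.
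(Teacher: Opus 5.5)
Your proposal is correct and is essentially the paper's proof: you combine Proposition~\ref{prop: average primitive} with Corollaries~\ref{cor: S1} and~\ref{cor: S2}, use a Fej\'er-type kernel with $\widehat{\phi}(0)/\phi(0)=1/\sigma$, and let $\sigma\to 8/(3D)$. The paper's choice $\phi(x)=\sin^2(\pi\sigma x)/(2\pi x)^2$ differs from yours only by the constant factor $\sigma^2/4$, so the ratio is the same, and your explicit $\limsup$ together with the constraint $\sigma<8/(3D)$ from $S_1$ simply makes precise the limiting step the paper leaves implicit.
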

\begin{proof}
By Proposition \ref{prop: average primitive}, Corollary \ref{cor: S1}, and Corollary \ref{cor: S2}, we obtain 
\begin{align*}
    \frac{1}{\# \cP(w_g)(K)(X)}\sum_{\bfa \in \cP(w_g)(K)(X)} r_{\bfa}
    \leq \frac{1}{2} +  4g(2g+1) \frac{\widehat{\phi}(0)}{\phi(0)} + O\lbrb{\frac{1}{\log X} + \frac{X^{\frac{3 \sigma}{2} - \frac{4}{D}}}{\log X} }.
\end{align*}
We choose $\phi(x) = \frac{\sin^2(2 \pi x \frac{1}{2}\sigma )}{(2 \pi x)^2}$ so that 
\begin{align*}
\widehat{\phi}(u) = \frac{1}{2}\lbrb{\frac{1}{2}\sigma - \frac{1}{2}|u|}, \qquad 
\frac{\widehat{\phi}(0)}{\phi(0)} = \frac{1}{\sigma}.        
\end{align*}
Hence, we obtain the result by taking the limit $\sigma \to \frac{8}{3D}$.
\end{proof}


\begin{thebibliography}{alpha}


\bibitem[BG13]{BG13} M. Bhargava, B. H. Gross, The average size of the 2-Selmer group of Jacobians of hyperelliptic curves having a rational Weierstrass point. Automorphic representations and L-functions, 23–91.
Tata Inst. Fundam. Res. Stud. Math., 22
Published for the Tata Institute of Fundamental Research, Mumbai; by, 2013


\bibitem[BK94]{BK94} A. Brumer, K. Kramer, The conductor of an abelian variety.
Compositio Math. \textbf{92} (1994), no. 2, 227–248.



\bibitem[BN22]{BN} P. Bruin, F. Najman, Counting elliptic curves with prescribed level structures over number fields. J. Lond. Math. Soc. (2) \textbf{105} (2022), no.4, 2415–2435.



\bibitem[CJ23a]{CJ1} P. J. Cho, K. Jeong, On the distribution of analytic ranks of elliptic curves. Math. Z. \textbf{305} (2023), no. 3, Paper No. 42, 20 pp.

\bibitem[CJ23b]{CJ2} P. J. Cho, K. Jeong, The average analytic rank of elliptic curves with prescribed torsion. J. Lond. Math. Soc. (2) \textbf{107} (2023), no.2, 616–657.

\bibitem[CJP25]{CJP} P. J. Cho, K. Jeong, J. Park, The average analytic rank of elliptic curves with prescribed level structure, preprint.



\bibitem[CY26]{CY} P. J. Cho, J. Yoo, On the ranks of $L$-functions of hyperelliptic curves in certain family, preprint.



\bibitem[HP23]{HP23} C. Han, J-Y Park, Enumerating odd-degree hyperelliptic curves and abelian surfaces over $\mathbb{P}^1$. Math. Z. \textbf{304} (2023), no. 1, Paper No. 5, 32 pp.

\bibitem[IK04]{IK} H. Iwaniec, E. Kowalski, Analytic number theory. Amer. Math. Soc. Colloq. Publ., 53 American Mathematical Society, Providence, RI, 2004. xii+615 pp.




\bibitem[Liu02]{Liu02} Q. Liu, Algebraic geometry and Arithmetic Curves, Oxford University Press, 2002.


\bibitem[OS24]{OS24} A. Obus, P. Srinivasan,
Conductor-discriminant inequality for hyperelliptic curves in odd residue characteristic. Int. Math. Res. Not. IMRN 2024, no. 9, 7343–7359.

\bibitem[JP26]{JP26} K. Jeong, J. Park, Goldfeld conjecture for non-hyperelliptic direction, preprint.


\bibitem[Phi25]{Phi2} T. Phillips, Average analytic ranks of elliptic curves over number fields. Forum Math. Sigma \textbf{13} (2025), Paper No. e40, 36 pp.



\bibitem[Ser70]{Ser70} J.-P. Serre, Facteurs locaux des fonctions z\^{e}ta des variet\'es alg\'ebriques (d\'efinitions et conjectures). S\'eminaire Delange-Pisot-Poitou. 11e ann\'ee: 1969/70. Th\'eorie des nombres. Fasc. 1: Expos\'es 1 {\`a} 15; Fasc. 2: Expos\'es 16 {\`a} 24, 15 pp. Secr\'etariat Math\'ematique, Paris, 1970


\bibitem[SSW21]{SSW} A. N. Shankar, A. Shankar, X. Wang, Large families of elliptic curves ordered by conductor. Compos. Math. \textbf{157} (2021), no. 7, 1538–1583.




\end{thebibliography}
\end{document}